\newcommand*{\mailto}[1]{\href{mailto:#1}{\nolinkurl{#1}}}
\newcommand{\arxiv}[1]{\href{http://arxiv.org/abs/#1}{arXiv:#1}}
\newtheorem{theorem}{Theorem}[section]
\newtheorem{lemma}[theorem]{Lemma}
\newtheorem{proposition}[theorem]{Proposition}
\newtheorem{corollary}[theorem]{Corollary}
\newcommand{\R}{{\mathbb R}}
\newcommand{\N}{{\mathbb N}}
\newcommand{\C}{{\mathbb C}}
\newcommand{\OO}{\mathcal{O}}
\newcommand{\oo}{o}
\newcommand{\be}{\begin{equation}}
\newcommand{\ee}{\end{equation}}
\newcommand{\spr}[2]{\langle #1 , #2 \rangle}
\newcommand{\E}{\mathrm{e}}
\newcommand{\sgn}{\mathrm{sgn}}
\newcommand{\tr}{\mathrm{tr}}
\newcommand{\dom}[1]{\mathrm{dom}(#1)}
\newcommand{\mul}[1]{\mathrm{mul}(#1)}
\newcommand{\ran}[1]{\mathrm{ran}(#1)}
\DeclareMathOperator{\linspan}{span}
\DeclareMathOperator{\supp}{supp}
\newcommand{\CR}{C_b(\R)}
\newcommand{\Lrom}{L^2(\R;|\om|)}
\newcommand{\Krom}{L^2(\R;\om)}
\newcommand{\Deftau}{\mathfrak{D}_\tau}
\newcommand{\M}{\mathcal{M}}
\newcommand{\BC}{BC}
\newcommand{\eps}{\varepsilon}
\newcommand{\sig}{\sigma}
\newcommand{\lam}{\lambda}
\newcommand{\om}{\omega}
\numberwithin{equation}{section}
\begin{document}

\title[On the isospectral problem of the Camassa--Holm equation]{On the isospectral problem of the dispersionless Camassa--Holm equation}

\author[J.\ Eckhardt]{Jonathan Eckhardt}
\address{Faculty of Mathematics\\ University of Vienna\\
Nordbergstrasse 15\\ 1090 Wien\\ Austria}
\email{\mailto{jonathan.eckhardt@univie.ac.at}}
\urladdr{\url{http://homepage.univie.ac.at/jonathan.eckhardt/}}

\author[G.\ Teschl]{Gerald Teschl}
\address{Faculty of Mathematics\\ University of Vienna\\
Nordbergstrasse 15\\ 1090 Wien\\ Austria\\ and International
Erwin Schr\"odinger
Institute for Mathematical Physics\\ Boltzmanngasse 9\\ 1090 Wien\\ Austria}
\email{\mailto{Gerald.Teschl@univie.ac.at}}
\urladdr{\url{http://www.mat.univie.ac.at/~gerald/}}

\thanks{Adv.\ Math.\ {\bf 235}, 469--495 (2013)}
\thanks{{\it Research supported by the Austrian Science Fund (FWF) under Grant No.\ Y330}}

\keywords{Camassa--Holm equation, isospectral problem, inverse spectral theory}
\subjclass[2010]{Primary 37K15, 34B40; Secondary 35Q35, 34L05}

\begin{abstract}
We discuss direct and inverse spectral theory for the isospectral problem of the dispersionless Camassa--Holm equation, where the weight is allowed to be a finite signed measure. In particular, we prove that this weight is uniquely determined by the spectral data and solve the inverse spectral problem for the class of measures which are sign definite. The results are applied to deduce several facts for the dispersionless Camassa--Holm equation. In particular, we show that initial conditions with integrable momentum asymptotically
split into a sum of peakons as conjectured by McKean.
\end{abstract}

\maketitle

\section{Introduction}

The Camassa--Holm equation
\begin{equation}\label{ch.eq}
u_{t}+2\varkappa u_{x}-u_{txx}+3uu_x=2u_x u_{xx}+u u_{xxx},\qquad x,\,t\in\R
\end{equation}
is an integrable, nonlinear wave equation which models unidirectional wave propagation on shallow water. Due to its many remarkable properties, this equation has attracted a lot of attention in the recent past and we only refer to \cite{beco}, \cite{const}, \cite{coes}, \cite{como}, \cite{cost} for further information. 
 In particular, more on the hydrodynamical relevance of this model can be found in the recent articles \cite{jo} and \cite{cla}.

The associated isospectral problem is given by the weighted Sturm--Liouville equation  
\begin{align}\label{eqnISOPdiffeqn}
 -f''(x) + \frac{1}{4} f(x) = z \om(x) f(x), \quad x\in\R,~z\in\C,
\end{align} 
 where the weight $\om$ is related to $u$ via $\om:=u-u_{xx} +\varkappa$. 
Direct, and in particular inverse, spectral theory for this Sturm--Liouville problem are of peculiar interest for solving the Cauchy problem of the Camassa--Holm equation and
will be the main focus of the present paper. More specific, we are solely interested in the dispersionless case where $\varkappa=0$ and $\om$ decays spatially.

Provided that the weight $\omega$ is a strictly positive function, it is well known that the spectral problem~\eqref{eqnISOPdiffeqn} (with possibly some suitable boundary conditions) gives rise to a self-adjoint operator in a weighted Hilbert space $L^2(\R;\omega)$. Moreover, if $\omega$ is smooth enough, it is even possible to transform this problem into a (in general singular) Sturm--Liouville problem in potential form and some inverse spectral conclusions may be drawn from this.
However, in order to incorporate the main interesting phenomena (wave breaking \cite{coes1}, \cite{coes2} and multi-peakon solutions \cite{bss}, \cite{cost}) 
of the dispersionless Camassa--Holm equation, it is necessary to treat the case where $\omega$ is a finite signed Borel measure on $\R$. In fact, multi-peakon solutions of the Camassa--Holm equation correspond to weights  which are a finite sum of (real-valued) weighted Dirac measures and wave breaking only occurs if the weight changes sign.
Therefore, we will investigate \eqref{eqnISOPdiffeqn} when the weight $\omega$ is assumed to be an arbitrary finite signed Borel measure on $\R$. 

As noted by Beals, Sattinger, and Szmigielski \cite{bss}, the above isospectral problem~\eqref{eqnISOPdiffeqn} is closely related to the spectral problem for an indefinite string 
\begin{align}\label{eqnString}
 -g''(y) = z\, m(y) g(y), \quad y\in(-1,1),~z\in\C,
\end{align} 
 with a signed Borel measure $m$ on $(-1,1)$. 
 In fact, \eqref{eqnString} is equivalent to \eqref{eqnISOPdiffeqn} by virtue of the Liouville transform  
 \begin{align*}
   y & =\tanh\left(\frac{x}{2}\right), & g(y) & = f(x)\cosh^{-1}\left(\frac{x}{2}\right), &  m(y) & = 4\cosh^4\left(\frac{x}{2}\right) \om(x).
 \end{align*}
 Hereby note that the class of finite signed measures $\om$ on $\R$ corresponds to the class of signed measures $m$ on $(-1,1)$ subject to the condition 
 \begin{align}\label{eqnCondString}
  \int_{-1}^1 (1-y^2) d|m|(y) < \infty. 
 \end{align}
 In particular, some measure $m$ corresponding to a finite signed measure $\om$ on $\R$ is not necessarily finite. 
 More precisely, the total variation of $m$ is finite near the endpoint $\pm 1$ if and only if $\om$ satisfies the growth condition
\begin{align}\label{eqnIntLCCond}
  \int_\R \E^{\pm x} d|\om|(x) < \infty,
\end{align}
 and equivalently, if and only if \eqref{eqnISOPdiffeqn}  is in the limit-circle case (regarded as a spectral problem in the weighted Hilbert space $\Lrom$) at $\pm\infty$. 

 For the case of a positive measure $m$ with one endpoint being regular, spectral theory for~\eqref{eqnString} is quite well developed (see \cite[\S 6]{ka} for two singular endpoints). 
 The corresponding inverse spectral problem (with a variable right endpoint) was solved by Kac and Krein \cite{kakr} (see also \cite{dymmck}, \cite{kac}, \cite{kotwat}). 
  In particular, they showed that the case where $m$ (and hence $\om$) is a finite sum of weighted Dirac measures can be explicitly solved by virtue of the Stieltjes moment problem.   
 We will complement these classical inverse results in Section~\ref{secIP}  by allowing both endpoints in~\eqref{eqnString} to be quite singular.
  Although we will state our theorems in the terminology of the isospectral problem \eqref{eqnISOPdiffeqn}, we obtain a complete and concise characterization of all spectral measures arising from~\eqref{eqnString} with Borel measures $m$ on $(-1,1)$ of one sign and satisfying~\eqref{eqnCondString}.

Spectral theory for \eqref{eqnString} (and hence also for~\eqref{eqnISOPdiffeqn}) in the indefinite case is more complicated and literature regarding the inverse problem is quite scarce (for example, see \cite{krla79}, \cite{krla80}, \cite{lanwin}). 
  Some of the difficulties arising there are already visible when $\omega$ is just a finite sum of real-valued weighted Dirac measures. 
  In this case, Beals, Sattinger, and Szmigielski \cite{bss} were able to solve the inverse problem employing the connection with the Stieltjes moment problem mentioned above.  
  However, the description of the set of spectral measures is more complicated than in the definite case.  
 Apart from this, to the best of our knowledge, the only inverse results which deal with~\eqref{eqnISOPdiffeqn} in the indefinite setting are due to Bennewitz \cite{ben3} and Bennewitz, Brown, and Weikard \cite{bebrwe}, who proved uniqueness of the inverse problem under several restrictions on the weight measure. 
 In particular, these restrictions exclude the case of two singular endpoints (in the sense that~\eqref{eqnIntLCCond} does not hold) and the case when the weight measure has gaps in its support (which excludes the multi-peakon case).   
The main contribution of the present paper to the indefinite inverse problem is an extension of their result to arbitrary finite signed measures $\omega$ on $\R$. 
This is done by employing an inverse uniqueness theorem from \cite{LeftDefiniteSL} for general left-definite Sturm--Liouville problems with measure coefficients.  
In particular, this approach is based on singular Weyl--Titchmarsh--Kodaira theory which was first introduced by Kodaira \cite{ko} for one-dimensional Schr\"odinger operators.
Some gaps were later pointed out by Kac \cite{ka} and fixed using an alternate approach. 
  However, these results did not get much attention until recently when Gesztesy and Zinchenko \cite{gz} took it up again and triggered a large amount of results \cite{LeftDefiniteSL}, \cite{et2}, \cite{ful08}, \cite{fl}, \cite{kt}, \cite{kt2}, \cite{kst}, \cite{kst2}, \cite{kst3}. 
  In particular, Kostenko, Sakhnovich, and Teschl \cite{kst2} were the first to prove an inverse spectral result, by showing that the singular Weyl function uniquely determines the potential. 
  That the same is true for the spectral measure was shown by Eckhardt \cite{je} using de Branges theory and then extended to left-definite Sturm--Liouville operators in \cite{LeftDefiniteSL}.

We will apply these inverse spectral results to investigate the Cauchy problem for the dispersionless Camassa--Holm equation. In fact, our 
main motivation to study this inverse spectral transform stems from the quest for long-time asymptotics. 
 It was generally believed, and for example conjectured by McKean in~\cite{mckean} (cf.\ also \cite{mckean2}, \cite{mckean3}), that solutions of the dispersionless Camassa--Holm equation will asymptotically  split into a train of well separated single peakons, each of which corresponding to an eigenvalue of the underlying isospectral problem. 
Apart from the multi-peakon case \cite{bss} (and some low-regularity solutions \cite{li} as well as simplified case \cite{lou}), this has still been an open question to the best of our knowledge. 
 The difficulty hereby stems from the fact, that in contrast to other nonlinear wave equations, the inverse problem cannot be reformulated as a proper
Riemann--Hilbert problem because of the absence of a continuous spectrum. This is only possible for the case with dispersion, that is $\varkappa>0$,
where one can develop an inverse scattering approach similar to the one for
the Korteweg--de Vries equation (see \cite{bebrwe2}, \cite{const}, \cite{cgi}). The resulting oscillatory Riemann--Hilbert problem can then be analyzed using the nonlinear
steepest decent analysis originally introduced by Manakov, Its, Deift, and Zhou (see the survey \cite{diz} or the expository introduction \cite{grte}).
For the Camassa--Holm equation with dispersion, this was done by Boutet de Monvel and Shepelsky \cite{bosh} (there the limit $\varkappa\downarrow 0$ was also considered),
Boutet de Monvel, Its, and Shepelsky \cite{boitsh} and Boutet de Monvel, Kostenko, Shepelsky, and Teschl \cite{bokoshte}. 
 In contradistinction, our approach relies on a comprehensive study of direct and inverse spectral theory for the isospectral problem.
  In particular, we derive some kind of continuity for the inverse spectral problem which then allows us to deduce long-time asymptotics using the well-known time evolution of the spectral quantities.

 It remains to give a short outline of our paper. 
 Section~\ref{secISO} introduces the iso\-spec\-tral operator in the weighted Hilbert space $\Lrom$ associated with the weighted Sturm--Liouville problem~\eqref{eqnISOPdiffeqn} and discusses its basic properties. 
 Subsequently, in the following two sections, we describe the spectral quantities associated with the isospectral operator, determine its resolvent and derive some growth restrictions for certain (spatially decaying) solutions of~\eqref{eqnISOPdiffeqn}. 
 In order to apply the inverse uniqueness theorem from \cite{LeftDefiniteSL}, in Section~\ref{secLD} we introduce the so-called left-definite operator in $H^1(\R)$, associated with~\eqref{eqnISOPdiffeqn}, and describe how it is related to the isospectral operator.
 The following section discusses some continuity properties of the one-to-one correspondence between weight measures and spectral measures. 
 In Section~\ref{secIP} we will solve the inverse spectral problem in the case of sign definite weight measures. 
 The results obtained for the isospectral problem are then applied in Section~\ref{secCH} to deduce several facts (of the type of \cite{grhora} and \cite{holray})  for the dispersionless Camassa--Holm equation. 
 Finally, in the last section we will derive long-time asymptotics for solutions of the dispersionless Camassa--Holm equation.

\section{The isospectral operator}\label{secISO}

Let $\omega$ be some arbitrary finite signed Borel measure on $\R$. 
The maximal domain $\Deftau$ of functions for which~\eqref{eqnISOPdiffeqn} makes sense consists of all locally absolutely continuous functions $f$ for which the function
\begin{align}\label{eqnQD}
 -f'(x) + f'(c) + \frac{1}{4} \int_c^x f(s)ds, \quad x\in\R
\end{align}
is locally absolutely continuous with respect to the measure $\omega$. Note that this is the case if and only if there is some $f_\tau\in L^1_{\mathrm{loc}}(\R;|\omega|)$ such that
\begin{align*}
 -f'(x) + f'(c) + \frac{1}{4} \int_c^x f(s)ds = \int_c^x f_\tau(s)d\omega(s)
\end{align*}
for almost all $x\in\R$ with respect to the Lebesgue measure. Here and henceforth, integrals with respect to the measure $\omega $ have to be read as
\begin{align*}
 \int_c^x f_\tau(s)d\omega(s) = \begin{cases}
                                     \int_{[c,x)} f_\tau(s) d\omega(s), & x>c, \\
                                     0,                                     & x=c, \\
                                     -\int_{[x,c)} f_\tau(s) d\omega(s), & x< c. 
                                    \end{cases}
\end{align*}
In other words, the function $f_\tau$ is the Radon--Nikod\'{y}m derivative of the function~\eqref{eqnQD} with respect to the measure $\omega$. In this case, that is, for functions $f\in\Deftau$, we set $\tau f = f_\tau$ and hence our differential equation~\eqref{eqnISOPdiffeqn} becomes $(\tau-z)f=0$. 
 Moreover, given some functions $f$, $g\in\Deftau$,  a simple integration by parts yields the equation  
\begin{align}\label{eqnintpar}
 \int_\alpha^\beta  f(x) \tau g(x)  d\omega(x) =  \frac{1}{4} \int_\alpha^\beta f(x)g(x)dx + \int_\alpha^\beta f'(x)g'(x) dx -\left[fg'\right]_{\alpha}^\beta 
\end{align} 
for each $\alpha$, $\beta\in\R$ with $\alpha<\beta$, which will be used repeatedly.

Associated with this differential expression $\tau$ is a linear operator $T$ in the weighted Hilbert space $\Lrom$, equipped with the inner product 
\begin{align*}
 \spr{f}{g}_{\Lrom} = \int_\R f(x) g(x)^\ast d|\omega|(x), \quad f,\, g\in\Lrom.
\end{align*}
In order to define it, we say a function $f\in\Deftau$ with $f$, $\tau f\in\Lrom$ satisfies the boundary condition at $\pm\infty$ if
\begin{align}\label{eqnBC}
 \BC_{\pm\infty}(f) := \lim_{x\rightarrow\pm\infty} \left(f(x) \pm \frac{1}{2} f'(x)\right)\E^{\mp\frac{x}{2}} = 0.
\end{align}
Hereby note that these limits are known to exist; see~\cite[Lemma~4.2]{measureSL}. 
 Moreover, the boundary condition at $\pm\infty$ is actually superfluous, provided that $\omega$ does not decay to fast near $\pm\infty$. In fact, it is automatically satisfied if and only if 
\begin{align*}
 \int_\R \E^{\pm x} d|\omega|(x) = \infty.
\end{align*}
This difference stems from the fact that $\tau$ is either in the limit-circle or in the limit-point case at each endpoint. 
Now the linear operator $T$ is given by
\begin{align*}
 \dom{T} = \lbrace f\in\Deftau \,|\, f,\,\tau f\in \Lrom,~ \BC_{+\infty}(f) = \BC_{-\infty}(f) = 0 \rbrace
\end{align*}
and $T f = \tau f$ for $f\in\dom{T}$. Hereby note that each function in $\dom{T}$ has a unique representative $f\in\Deftau$ with $f$, $\tau f\in\Lrom$ and $\BC_{+\infty}(f) = \BC_{-\infty}(f) =0$ in view of~\cite[Section~7]{measureSL}. Hence the differential operator $T$ is well-defined. 

Unfortunately this operator $T$ is not self-adjoint unless the measure $\omega$ is of one sign. However, it will turn out to be self-adjoint in the Krein space $\Krom$ (which coincides with $\Lrom$ as a set), equipped with the (in general) indefinite inner product
\begin{align*}
  \spr{f}{g}_{\Krom} = \int_\R f(x) g(x)^\ast d\omega(x), \quad f,\, g\in \Lrom.
\end{align*}
The Krein space topology of $L^2(\R;\omega)$ is precisely the Hilbert space topology of $\Lrom$, which makes the indefinite inner product continuous.  More precisely, the unitary operator $J$ of multiplication with the sign of $\omega$ in $\Lrom$, i.e.
\begin{align*}
 J f(x) = \frac{d\omega}{d|\omega|}(x)\, f(x), \quad x\in\R,~f\in\Lrom,
\end{align*}
 is a fundamental symmetry with 
 \begin{align*}
  \spr{f}{g}_{\Krom} = \spr{J f}{g}_{\Lrom}, \quad f,\, g\in\Krom.
 \end{align*}
 Furthermore, $JT$ turns out to be the differential operator associated with the measure $|\omega|$. From the results in~\cite[Section~6]{measureSL} we infer that the operator $JT$ is self-adjoint in $\Lrom$ and hence $T$ is self-adjoint in $\Krom$.  
 In particular, this guarantees that $T$ is a closed operator in $\Lrom$. 

Finally, we want to emphasize that we only assumed $\omega$ to be a finite signed measure. In particular, $\omega$ is allowed to vanish on arbitrary sets and may even be equal to zero at all. Of course, this last case leads to a degenerate Hilbert space $\Lrom$ and a degenerate operator $T$. Moreover, also the case when $\omega$ is supported on only one point is still quite degenerate, since in this case all solutions of~\eqref{eqnISOPdiffeqn} are linearly dependent in $\Lrom$. Nevertheless, it still gives rise to a (one-dimensional) self-adjoint linear operator $T$ in $\Lrom$; see \cite[Appendix~C]{measureSL}.

\section{Spectrum and resolvent}

As a first step we construct solutions of $(\tau-z)u=0$ which satisfy the boundary condition at $\pm\infty$ and depend analytically on $z\in\C$.

\begin{theorem}\label{thmSpecResPHI}
 For each $z\in\C$ there is a unique solution $\phi_\pm(z,\cdot\,)$ of the differential equation $(\tau -z)u=0$ with the spatial asymptotics
\begin{align}\label{eqnSpatAsy}
 \phi_\pm(z,x) \sim \E^{\mp\frac{x}{2}} \quad\text{and}\quad \phi_\pm'(z,x) \sim \mp \frac{1}{2} \E^{\mp\frac{x}{2}},
\end{align}
as $x\rightarrow\pm\infty$. Moreover, the functions
\begin{align}\label{eqnPHIEntire}
 z\mapsto \phi_\pm(z,x) \quad\text{and}\quad z\mapsto\phi_\pm'(z,x)
 \end{align}
 are real entire and of finite exponential type for each $x\in\R$.
\end{theorem}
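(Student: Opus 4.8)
The plan is to construct $\phi_+$ explicitly (the case of $\phi_-$ being completely analogous, e.g.\ via the reflection $x\mapsto -x$, which interchanges the two endpoints and replaces $\om$ by its pushforward) as the solution of a Volterra-type integral equation that simultaneously encodes the differential equation $(\tau-z)u=0$ and the prescribed decay at $+\infty$. Since $\om$ is finite, it decays at $+\infty$, so near $+\infty$ the equation is a perturbation of $-u''+\tfrac14 u=0$, whose decaying solution is $\E^{-x/2}$. Using variation of parameters with the fundamental system $\E^{\mp x/2}$ (whose Wronskian is $1$), I would write, at least formally,
\begin{align*}
 \phi_+(z,x) = \E^{-x/2} - z\int_x^\infty \left(\E^{(s-x)/2}-\E^{(x-s)/2}\right)\phi_+(z,s)\,d\om(s),
\end{align*}
where integrating from $x$ to $\infty$ is precisely what selects the decaying solution and forces the desired asymptotics.

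The key step in turning this into a convergent scheme, given only a \emph{finite} measure, is to strip off the exponential growth of the kernel. Setting $\ti\phi(z,x):=\E^{x/2}\phi_+(z,x)$, the equation becomes
\begin{align*}
 \ti\phi(z,x) = 1 - z\int_x^\infty \left(1-\E^{x-s}\right)\ti\phi(z,s)\,d\om(s),
\end{align*}
whose kernel $1-\E^{x-s}$ lies in $[0,1)$ for $s\ge x$. I would then solve this by successive approximation: with $\ti\phi_0\equiv 1$ and $\ti\phi_{n+1}(z,x)=-z\int_x^\infty(1-\E^{x-s})\ti\phi_n(z,s)\,d\om(s)$, the nested (simplex) structure of the iterated integrals yields, by induction through $|\om|$,
\begin{align*}
 |\ti\phi_n(z,x)| \le \frac{|z|^n}{n!}\,|\om|([x,\infty))^n,
\end{align*}
so that $\ti\phi=\sum_n\ti\phi_n$ converges absolutely and uniformly on compact subsets of $\C\times\R$, with $|\ti\phi(z,x)|\le \E^{|z|\,|\om|([x,\infty))}$. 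This single estimate delivers three conclusions at once: existence of the solution, analyticity in $z$ (as a locally uniform limit of the partial sums, each of which is a polynomial in $z$ and hence entire), and the exponential type bound $|\om|([x,\infty))$ for each fixed $x$.

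The remaining steps are verifications. Uniqueness follows because any solution with the stated asymptotics must satisfy the integral equation, which has a unique solution by the Volterra argument. The asymptotics $\phi_+\sim\E^{-x/2}$ follow from $|\om|([x,\infty))\to 0$ as $x\to\infty$, and the companion relation for $\phi_+'$ from differentiating the integral equation, the diagonal boundary term vanishing because the kernel vanishes on the diagonal. That the functions are \emph{real} entire is immediate from the series: since the kernel and $\om$ are real, each $\ti\phi_n$ equals $z^n$ times a real function of $x$, whence $\phi_+(\ol z,x)=\ol{\phi_+(z,x)}$.

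The main obstacle I anticipate is the bookkeeping forced by the measure-theoretic setting rather than any deep conceptual difficulty. Concretely, one must set up the integral equation directly in the integrated (measure) form~\eqref{eqnQD}, take care with one-sided limits and the sign convention for $\int_c^x d\om$, and run all estimates through the total variation $|\om|$. Most importantly, one must confirm that the solution of the integral equation genuinely lies in $\Deftau$, i.e.\ that the function $-\phi_+'+\tfrac14\int\phi_+$ is locally absolutely continuous with respect to $\om$ with Radon--Nikod\'ym derivative $z\phi_+$, and that the constructed $\phi_\pm$ indeed satisfy $\BC_{\pm\infty}$; these are matters of care rather than difficulty.
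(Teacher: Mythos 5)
Your proposal is correct and follows essentially the same route as the paper: your $\ti\phi=\E^{x/2}\phi_+$ is precisely the paper's $m_\pm$, and the paper likewise solves the integral equation with kernel $\E^{\pm(x-s)}-1$ by a Neumann series in $C_b(\R)$ with the factorial bound $\|K_\pm^n\|\le\frac{1}{n!}\bigl(\int_\R d|\omega|\bigr)^n$, deducing existence, uniqueness from the asymptotics, analyticity, finite exponential type, and the derivative asymptotics exactly as you outline. The one point worth emphasizing is that the $1/n!$ in your simplex estimate survives atoms of $\omega$ only because the kernel vanishes on the diagonal (the paper handles this via the substitution rule for Lebesgue--Stieltjes integrals), which is precisely the measure-theoretic ``bookkeeping'' you anticipated.
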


\begin{proof}
 First of all we show that for each $z\in\C$, the integral equation
\begin{align}\label{eqnIsoInt}
 m_\pm(z,x) = 1 \pm z \int_{x}^{\pm\infty} \left(\E^{\pm(x-s)}-1\right) m_\pm(z,s)d\om(s), \quad x\in\R,
\end{align}
has a unique bounded continuous solution $m_\pm(z,\cdot\,)$.
To this end, consider the integral operator $K_\pm$ on $\CR$ 
\begin{align}\label{eqnIntOp}
 K_\pm f(x) = \pm \int_{x}^{\pm\infty} \left(\E^{\pm(x-s)}-1\right)f(s)d\om(s), \quad x\in\R,~f\in\CR,
\end{align}
where $\CR$ is the space of bounded continuous functions on $\R$. Note that for each $f\in\CR$ the function $K_\pm f$ is continuous since the integrand is bounded and continuous. 
Moreover, for each $n\in\N$ we have the estimate
\begin{align*}
 \sup_{s<x} \left|K_-^n f(s)\right| \leq \frac{1}{n!} \left(\int_{-\infty}^x d|\omega|\right)^n \sup_{s<x} \left|f(s)\right|, \quad x\in\R.
\end{align*}
In fact, the case when $n=1$ is easily verified. Otherwise we get inductively
\begin{align*}
 \sup_{s<x} \left|K_-^n f(s)\right| & \leq \sup_{s<x} \int_{-\infty}^s \left|\E^{r-s}-1\right| \left|K_-^{n-1}f(r)\right| d|\om|(r) \\
  & \leq \frac{1}{(n-1)!} \int_{-\infty}^x  \left(\int_{-\infty}^r d|\om|\right)^{n-1} d|\om|(r)\, \sup_{s<x} \left|f(s)\right|, \quad x\in\R.
\end{align*}
Now an application of the substitution rule for Lebesgue--Stieltjes integrals~\cite{tsub} yields the claim. Similarly one obtains a corresponding estimate for $K_+ f$ which ensures that $K_\pm f\in\CR$. Moreover, we even get the bound 
\begin{align}\tag{$*$}\label{eqnKneu}
 \|K_\pm^n\| \leq \frac{1}{n!} \left(\int_\R d|\omega|\right)^n, \quad n\in\N,
\end{align}
and hence the Neumann series 
\begin{align}\label{eqnNeumannseries}
 m_\pm(z,x) = \sum_{n=0}^\infty z^n K_\pm^n 1(x) = (I-zK_\pm)^{-1} 1(x), \quad x\in\R,~z\in\C,
\end{align}
converges absolutely, uniformly in $x\in\R$ and even locally uniformly in $z\in\C$.
In particular, this function $m_\pm(z,\cdot\,)$ is the unique solution in $\CR$ of the integral equation in~\eqref{eqnIsoInt}. Moreover, integrating the right-hand side of~\eqref{eqnIsoInt} by parts shows that this function is locally absolutely continuous with derivative given by
\begin{align}\label{eqnIntEqnDer}
 m_\pm'(z,x) =  z\int_{x}^{\pm\infty} \E^{\pm(x-s)} m_\pm(z,s)d\om(s), \quad x\in\R,~z\in\C.
\end{align}
Therefore, we have the spatial asymptotics 
\begin{align*}
 m_\pm(z,x) \rightarrow 1 \quad\text{and}\quad m_\pm'(z,x)\rightarrow 0,
\end{align*}
as $x\rightarrow\pm\infty$ for each $z\in\C$. Indeed, this follows from the integral equation (and its spatial derivative) and the fact that the function $m_\pm(z,\cdot\,)$ is uniformly bounded. 
Now equation~\eqref{eqnIsoInt} shows that the functions 
\begin{align*}
\phi_\pm(z,x) = \E^{\mp\frac{x}{2}}m_\pm(z,x), \quad x\in\R,~z\in\C,
\end{align*}
 satisfy the integral equations 
\begin{align*}
 \phi_\pm(z,x) & = \E^{\mp\frac{x}{2}} \pm z \int_{x}^{\pm\infty} \left(\E^{\pm\frac{x-s}{2}}-\E^{\mp\frac{x-s}{2}}\right) \phi_\pm(z,s)d\om(s),\quad x\in\R,~z\in\C.
\end{align*}
From this it is easily verified that $\phi_\pm(z,\cdot\,)$ is a solution of $(\tau-z)u=0$ (see e.g.~\cite[Proposition~3.3]{measureSL}).
The spatial asymptotics of $\phi_\pm(z,\cdot\,)$ near $\pm\infty$ easily follow from the corresponding results for the function $m_\pm(z,\cdot\,)$. 
Also note that these asymptotics uniquely determine the solution $\phi_\pm(z,\cdot\,)$. 
Finally, the Neumann series and the estimates in~\eqref{eqnKneu} guarantee that $m_\pm(\,\cdot\,,x)$ is real entire and of finite exponential type, uniformly for all $x\in\R$. Hence we see from~\eqref{eqnIntEqnDer} that $m_\pm'(\,\cdot\,,x)$ is also real entire with finite exponential type for each $x\in\R$. Of course, this proves that the functions in~\eqref{eqnPHIEntire} are real entire and of finite exponential type for each $x\in\R$. 
\end{proof}

From the spatial asymptotics of the solutions $\phi_\pm(z,\cdot\,)$, $z\in\C$ it is easily seen that they are square integrable with respect to $|\omega|$ near $\pm\infty$ and satisfy the boundary condition~\eqref{eqnBC} there.
In particular, this guarantees that the spectrum of $T$ is purely discrete and simple.
 More precisely, from~\cite[Theorem~8.5]{measureSL} and~\cite[Theorem~9.6]{measureSL} it follows that $JT$ and hence also $T$ has purely discrete spectrum. The fact that the spectrum is simple follows literally as in the proof of~\cite[Corollary~8.4]{measureSL}. 
 Consequently, some $\lambda\in\C$ is an eigenvalue of $T$ if and only if the solutions $\phi_-(\lambda,\cdot\,)$ and $\phi_+(\lambda,\cdot\,)$ are linearly dependent, that is, their Wronskian 
\begin{align*}
 W(z) = \phi_+(z,x)\phi_-'(z,x) - \phi_+'(z,x)\phi_-(z,x), \quad z\in\C
\end{align*}
vanishes in $\lambda$. 
 In this case there is a constant $c_{\lambda,\pm}\in\C^\times$ such that 
\begin{align}\label{eqnCoup}
 \phi_\pm(\lambda,x) = c_{\lambda,\pm} \phi_\mp(\lambda,x), \quad x\in\R.
\end{align}
 Moreover, the quantity 
\begin{align}\label{eqnNorm}
 \gamma_{\lambda,\pm}^{2} = \int_\R |\phi_\pm(\lambda,x)|^2 d\omega(x),
\end{align}
 is finite and referred to as the left/right norming constant associated with the eigenvalue $\lambda$.
 Using~\eqref{eqnintpar} and the spatial asymptotics of the solution $\phi_\pm(\lambda,\cdot\,)$ in combination with $(\tau-z)\phi_\pm=0$ shows that
\begin{align}\label{eqnNormConstH1}
 \lambda\gamma_{\lambda,\pm}^2 = \frac{1}{4} \int_\R |\phi_\pm(\lambda,x)|^2 dx + \int_\R |\phi_\pm'(\lambda,x)|^2 dx > 0.
\end{align}
 In particular, this guarantees that $\lambda$ and  hence the spectrum $\sigma(T)$ of $T$ is real. Moreover, from this equation one also sees that the spectrum is positive (respectively negative), provided that the measure $\omega$ is positive (respectively negative). The following lemma relates all these spectral quantities.

\begin{lemma}\label{lemWderlam}
 For each eigenvalue $\lambda\in\sigma(T)$ we have
\begin{align}\label{eqnWlam}
 - \dot{W}(\lambda) = \int_\R \phi_-(\lambda,x)\phi_+(\lambda,x)d\om(x) = c_{\lambda,\mp} \gamma_{\lambda,\pm}^{2} \not=0,
\end{align}
where the dot denotes differentiation with respect to the spectral parameter. 
\end{lemma}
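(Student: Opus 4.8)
The plan is to treat the three assertions in turn, with the Wronskian--derivative formula $-\dot{W}(\lambda)=\int_\R\phi_-\phi_+\,d\om$ carrying essentially all of the work, and the two remaining equalities following quickly from \eqref{eqnCoup} and \eqref{eqnNormConstH1}.

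For the first identity I would differentiate the eigenvalue equation in the spectral parameter. Since Theorem~\ref{thmSpecResPHI} guarantees that $z\mapsto\phi_\pm(z,x)$ and $z\mapsto\phi_\pm'(z,x)$ are entire, the derivatives $\dot\phi_\pm$ again lie in $\Deftau$ and, upon differentiating $(\tau-z)\phi_\pm=0$, satisfy the inhomogeneous equation $(\tau-z)\dot\phi_\pm=\phi_\pm$. Differentiating $W(z)=\phi_+\phi_-'-\phi_+'\phi_-$ then yields the pointwise decomposition
\[
 \dot W = \bigl(\dot\phi_+\phi_-'-\dot\phi_+'\phi_-\bigr) + \bigl(\phi_+\dot\phi_-'-\phi_+'\dot\phi_-\bigr),
\]
in which each summand depends on $x$ although the sum is the ($x$-independent) constant $\dot W$. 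Rewriting \eqref{eqnintpar} in its Lagrange form $[fg'-f'g]_\al^\beta=\int_\al^\beta(g\,\tau f-f\,\tau g)\,d\om$ and applying it to the pairs $(\dot\phi_+,\phi_-)$ and $(\phi_+,\dot\phi_-)$, the relations $(\tau-\lambda)\dot\phi_\pm=\phi_\pm$ and $\tau\phi_\mp=\lambda\phi_\mp$ collapse the integrands, so that the increments of the two summands over $[\al,\beta]$ come out to be exactly $+\int_\al^\beta\phi_-\phi_+\,d\om$ and $-\int_\al^\beta\phi_+\phi_-\,d\om$.

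The decisive step is then to let $\al\to-\infty$ and $\beta\to+\infty$ and to show that the boundary terms disappear; I expect this to be the main obstacle and the only point where the exponential asymptotics of Theorem~\ref{thmSpecResPHI} are genuinely used. At the eigenvalue the coupling \eqref{eqnCoup} makes $\phi_-(\lambda,\cdot)=c_{\lambda,-}\phi_+(\lambda,\cdot)$ decay like $\E^{-x/2}$ at $+\infty$ and $\phi_+(\lambda,\cdot)=c_{\lambda,+}\phi_-(\lambda,\cdot)$ decay like $\E^{x/2}$ at $-\infty$, while $\dot\phi_\pm(\lambda,x)=\E^{\mp x/2}\dot m_\pm(\lambda,x)$ with $\dot m_\pm$ and $\dot m_\pm'$ bounded, as is read off by differentiating the Neumann series \eqref{eqnNeumannseries} together with \eqref{eqnIntEqnDer}. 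Hence in the first summand both factors decay like $\E^{-x/2}$ at $+\infty$, forcing it to vanish there, and likewise the second summand vanishes at $-\infty$. Since their sum is the constant $\dot W$, the first summand therefore tends to $\dot W$ at $-\infty$ and the second tends to $\dot W$ at $+\infty$; feeding these boundary values into the two increment formulas yields $-\dot W(\lambda)=\int_\R\phi_-\phi_+\,d\om$.

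The remaining two claims are then immediate. Because $\lambda$ is real the eigenfunctions $\phi_\pm(\lambda,\cdot)$ are real, so $\gamma_{\lambda,\pm}^2=\int_\R\phi_\pm(\lambda,x)^2\,d\om(x)$; substituting $\phi_\mp(\lambda,\cdot)=c_{\lambda,\mp}\phi_\pm(\lambda,\cdot)$ from \eqref{eqnCoup} into $\int_\R\phi_-\phi_+\,d\om$ then gives $c_{\lambda,\mp}\gamma_{\lambda,\pm}^2$. Finally this quantity is nonzero, since $c_{\lambda,\mp}\in\C^\times$ while \eqref{eqnNormConstH1} shows $\lambda\gamma_{\lambda,\pm}^2>0$, whence $\gamma_{\lambda,\pm}^2\neq0$.
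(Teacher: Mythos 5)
Your proposal is correct and follows essentially the same route as the paper: decompose $\dot W$ into the two $x$-dependent pieces $W_\pm(z,x)=\dot\phi_\pm\phi_\mp'-\dot\phi_\pm'\phi_\mp$, compute their increments via the Lagrange identity using $(\tau-\lambda)\dot\phi_\pm=\phi_\pm$, and kill the boundary terms at $\pm\infty$ using the coupling \eqref{eqnCoup} together with the behaviour of $\dot m_\pm$, $\dot m_\pm'$ read off from the Neumann series and \eqref{eqnIntEqnDer}. The only cosmetic difference is that you invoke boundedness of $\dot m_\pm$, $\dot m_\pm'$ where the paper proves they tend to zero; since the products carry a net factor $\E^{-|x|}$, either suffices.
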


\begin{proof}
We set
\begin{align}\tag{$*$}\label{eqnWpm}
 W_\pm(z,x) = \dot{\phi}_\pm(z,x) \phi_\mp'(z,x) - \dot{\phi}'_\pm(z,x)\phi_\mp(z,x), \quad x\in\R,~z\in\C,  
\end{align}
where the spatial differentiation is done first. Now, using the differential equation for the solution $\phi_\pm(z,\cdot\,)$ one gets
\begin{align*}
 W_\pm(z,\beta) - W_\pm(z,\alpha) = \int_\alpha^\beta \phi_-(z,s) \phi_+(z,s)d\omega(s), \quad \alpha,\,\beta\in\R.
\end{align*}
More precisely, this follows by differentiating~\eqref{eqnWpm} with respect to the spatial variable, where the derivative is in general a Borel measure.
Now differentiating the integral equation in~\eqref{eqnIsoInt} and its spatial derivative in~\eqref{eqnIntEqnDer} with respect to the spectral variable we get
\begin{align*}
 (I-zK_\pm) \dot{m}_\pm(z,\cdot\,) = K_\pm m_\pm(z,\cdot\,), \quad z\in\C,
\end{align*}
as well as
\begin{align*}
 \dot{m}_\pm'(z,x) = \int_x^{\pm\infty} \E^{\pm(x-s)} \left(m_\pm(z,s)+z\dot{m}_\pm(z,s)\right) d\omega(s), \quad x\in\R,~z\in\C.
\end{align*}
In particular this shows that
\begin{align*}
 \dot{m}_\pm(z,x) \rightarrow 0 \quad\text{and}\quad \dot{m}_\pm'(z,x) \rightarrow 0,
\end{align*}
as $x\rightarrow\pm\infty$ for each $z\in\C$. If $\lambda\in\sigma(T)$ is an eigenvalue, then we furthermore know that
\begin{align*}
 m_\mp(\lambda,x) = \E^{\mp\frac{x}{2}} \phi_\mp(\lambda,x) = \E^{\mp\frac{x}{2}} c_{\lambda,\mp} \phi_\pm(\lambda,x), \quad x\in\R,
\end{align*}
and hence $m_\mp(\lambda,x)$ and $m_\mp'(\lambda,x)$ are bounded as $x\rightarrow\pm\infty$. A calculation shows that for each $x\in\R$ we have 
\begin{align*}
 W_\pm(\lambda,x) = \dot{m}_\pm(\lambda,x) m_\mp(\lambda,x) + \dot{m}_\pm(\lambda,x) m_\mp'(\lambda,x) - \dot{m}_\pm'(\lambda,x) m_\mp(\lambda,x),
\end{align*}
which tends to zero as $x\rightarrow\pm\infty$. Therefore we conclude
\begin{align*}
 W_\pm(\lambda,x) = - \int_x^{\pm\infty} \phi_-(\lambda,s) \phi_+(\lambda,s) d\omega(s), \quad x\in\R,
\end{align*}
and hence finally
\begin{align*}
 - \dot{W}(\lambda) = W_-(\lambda,x) - W_+(\lambda,x) =  \int_\R \phi_-(\lambda,s)\phi_+(\lambda,s)d\omega(s), \quad x\in\R,
\end{align*}
which is the claimed identity.
\end{proof}

We will now determine the inverse of our operator $T$. 
 Therefore note that from the Neumann series~\eqref{eqnNeumannseries} we get
\begin{align*}
 \phi_\pm(0,x)= \E^{\mp\frac{x}{2}}, \quad x\in\R.
\end{align*}
Furthermore, this series yields an expansion of $\phi_\pm(\,\cdot\,,x)$ near zero which will be needed later on.

\begin{proposition}\label{propInverse}
The operator $T$ is invertible with inverse given by
\begin{align}
 T^{-1} g(x) = \int_\R \E^{-\frac{|x-s|}{2}} g(s)d\omega(s), \quad x\in\R, ~ g\in\Lrom.
\end{align}
Moreover, this inverse is a trace class operator with
\begin{align}\label{eqnTF}
  \sum_{\lambda\in\sigma(T)} \frac{1}{\lambda} = \int_\R d\omega \quad\text{and}\quad \sum_{\lambda\in\sigma(T)} \frac{1}{|\lambda|} \leq \int_\R d|\omega|,
\end{align}
where the inequality is strict if and only if $\omega$ changes sign.
\end{proposition}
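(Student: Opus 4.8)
The plan is to establish invertibility together with the explicit kernel first, and then to read off both trace formulas from that kernel.

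\emph{Step 1: the explicit inverse.} First I would read off from the Neumann series~\eqref{eqnNeumannseries} that $\phi_\pm(0,x)=\E^{\mp x/2}$ and compute the Wronskian $W(0)=1\neq0$; hence $0$ is not an eigenvalue and $T$ is invertible. Since $\phi_+(0,\cdot\,)$ and $\phi_-(0,\cdot\,)$ are the solutions decaying at $+\infty$ and $-\infty$ respectively, the general resolvent construction (as in~\cite{measureSL}) gives $T^{-1}$ as the integral operator with kernel $\phi_+(0,x_>)\phi_-(0,x_<)/W(0)$, where $x_>=\max(x,s)$ and $x_<=\min(x,s)$. Because $W(0)=1$ and $\phi_+(0,x_>)\phi_-(0,x_<)=\E^{-|x-s|/2}$, this is exactly the claimed formula. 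I would then confirm directly that $u:=T^{-1}g$ lies in $\Deftau$ with $u,\tau u\in\Lrom$, solves $\tau u=g$, and satisfies $\BC_{\pm\infty}(u)=0$, all of which follow from the spatial asymptotics of $\phi_\pm(0,\cdot\,)$ and the finiteness of $\om$.

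\emph{Step 2: trace class and the first identity.} Write $\sig=d\om/d|\om|$, so $J$ is multiplication by $\sig$ and $d\om=\sig\,d|\om|$. Since $JT$ is the differential operator associated with the positive measure $|\om|$, it is self-adjoint in $\Lrom$ and, by~\eqref{eqnNormConstH1} applied to $|\om|$, strictly positive. Using $J$-self-adjointness of $T$, namely $(T^{-1})^\ast=JT^{-1}J$, one gets $(T^{-1})^\ast T^{-1}=(JT^{-1})^2$, where $A:=JT^{-1}=J(JT)^{-1}J$ is positive; hence $A=|T^{-1}|$. The kernel of $A$ with respect to $d|\om|$ is $\sig(x)\sig(s)\E^{-|x-s|/2}$, with continuous diagonal identically $1$, so Mercer's theorem gives $\tr(A)=\int_\R d|\om|<\infty$ and $T^{-1}$ is trace class. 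The same Mercer expansion applied to $T^{-1}=JA$, whose kernel with respect to $d|\om|$ has diagonal $\sig$, yields $\tr(T^{-1})=\int_\R\sig\,d|\om|=\int_\R d\om$; since the eigenvalues of $T^{-1}$ are precisely the numbers $1/\lam$, Lidskii's theorem identifies this trace with $\sum_\lam 1/\lam$, which is the first identity in~\eqref{eqnTF}.

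\emph{Step 3: the inequality and its sharpness.} Here I would use $T^{-1}=JA$. Conjugating by $A^{1/2}$ shows that $T^{-1}$ has the same (necessarily real) eigenvalues as the self-adjoint operator $C:=A^{1/2}JA^{1/2}$, so that $\sum_\lam 1/|\lam|=\|C\|_1$. The Hilbert--Schmidt Cauchy--Schwarz inequality then gives
\begin{align*}
 \sum_{\lam}\frac{1}{|\lam|}=\bigl\|A^{1/2}\,(JA^{1/2})\bigr\|_1\leq\bigl\|A^{1/2}\bigr\|_2\,\bigl\|JA^{1/2}\bigr\|_2=\tr(A)=\int_\R d|\om|,
\end{align*}
which is the second relation in~\eqref{eqnTF}. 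For the equality case, the equality condition in the Hilbert--Schmidt Cauchy--Schwarz inequality forces $JA^{1/2}$ to be proportional to $A^{1/2}\sgn(C)$, and a short manipulation turns this into $[J,A]=0$, that is, $J$ commutes with $A=|T^{-1}|$ (equivalently with the operator associated with $|\om|$). By simplicity of the spectrum each eigenfunction $\psi$ then satisfies $J\psi=\pm\psi$, and completeness of the eigenfunctions forces $\sig$ to be $|\om|$-almost everywhere equal to a single constant $\pm1$; equivalently $\om$ does not change sign. Conversely, if $\om$ is sign definite then $J=\pm I$ and $T^{-1}=\pm A$ is self-adjoint, so equality holds.

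\emph{Main obstacle.} Steps~1 and~2 are routine once the kernel is in hand. The crux is the strict inequality in Step~3: one has to extract the equality case of the Hilbert--Schmidt Cauchy--Schwarz inequality cleanly and then convert the resulting commutation relation $[J,A]=0$ into sign definiteness of $\om$, for which the simplicity and completeness of the eigenfunctions are essential.
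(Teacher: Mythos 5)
Your Steps 1 and 2 follow essentially the same route as the paper: the kernel is read off from $\phi_\pm(0,x)=\E^{\mp x/2}$ and $W(0)=1$, and trace class follows from positivity of $(JT)^{-1}$ together with the lemma that the trace of a positive integral operator with continuous kernel is the integral of its diagonal. Two repairs are needed there. First, that lemma (like Mercer's theorem) requires a positive operator with a \emph{continuous} kernel; your $A=J(JT)^{-1}J$ has the discontinuous kernel $\sig(x)\sig(s)\E^{-|x-s|/2}$, so you should instead get $\tr A=\tr (JT)^{-1}=\int_\R d|\om|$ from unitary invariance of the trace. Second, and more substantially, $T^{-1}$ is neither positive nor self-adjoint, so ``the same Mercer expansion'' does not yield $\tr T^{-1}=\int_\R\sig\,d|\om|$. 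The paper fixes this by splitting $\Lrom=L^2(\R;\om_+)\oplus L^2(\R;\om_-)$ via the Hahn--Jordan decomposition: the diagonal blocks of $T^{-1}$ are $R_+$ and $-R_-$ with $R_\pm$ positive with continuous kernels, and the trace is the sum of the traces of the diagonal blocks (plus Lidskii to identify the trace with $\sum 1/\lam$). You need this, or an equivalent justification.

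Step 3 is a genuinely different argument from the paper's, which invokes Weyl's majorant theorem ($\sum_j|\lam_j|\le\sum_j s_j$, with equality precisely for normal operators); your reduction to $C=A^{1/2}JA^{1/2}$ via the trace-norm Cauchy--Schwarz inequality is a legitimate, essentially self-contained substitute, and it correctly lands on the commutation relation $[J,A]=0$ in the equality case. The gap is the final implication. Simplicity and completeness of the eigenfunctions do \emph{not} force $\sig$ to be a single constant: they only show that each eigenfunction $\psi_j$ of $A$ satisfies $J\psi_j=\eps_j\psi_j$ with $\eps_j=\pm1$, i.e.\ that $\sig=\eps_j$ holds $|\om|$-a.e.\ on the support of $\psi_j$, and nothing abstract prevents one family of eigenfunctions from being supported on $\{\sig=1\}$ and the other on $\{\sig=-1\}$. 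What actually closes the argument is the strict positivity of the kernel: $[J,A]=0$ is equivalent to $(JT)^{-1}$ being block diagonal with respect to $L^2(\{\sig=1\};|\om|)\oplus L^2(\{\sig=-1\};|\om|)$, so the off-diagonal blocks must vanish; but if both sets had positive $|\om|$-measure, then for $x$ in one of them $\int \E^{-|x-s|/2}\,\indik_{\{\sig=-1\}}(s)\,d|\om|(s)>0$, a contradiction. With that observation inserted, your Step 3 is complete --- and in fact it supplies a detail that the paper's own proof leaves implicit (the paper only asserts the equivalence with normality of $T^{-1}$, with the quantifier apparently inverted, and never converts normality into sign-definiteness of $\om$).
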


\begin{proof}
 Since the solutions $\phi_-(0,\cdot\,)$ and $\phi_+(0,\cdot\,)$ are linearly independent, $JT$ is invertible with
 \begin{align*}
  (JT)^{-1} g(x) = \int_\R \E^{-\frac{|x-s|}{2}} g(s) d|\omega|(s), \quad x\in\R,~ g\in\Lrom,
 \end{align*}
 in view of~\cite[Theorem~8.3]{measureSL}. Thus $T$ is invertible as well with inverse given as in the claim. 
  Moreover, since the spectrum of $JT$ is positive in view of equation~\eqref{eqnNormConstH1}, $(JT)^{-1}$ is positive as well and we infer from the lemma on page~65
  in \cite[Section~XI.4]{RS2} that $(JT)^{-1}$ is even a trace class operator with trace norm
 \begin{align*}
   \tr\; (JT)^{-1} = \int_\R d|\omega|.
 \end{align*}
 But this shows that $T^{-1}$ is also a trace class operator with the same trace norm as $(JT)^{-1}$, which proves the inequality in~\eqref{eqnTF} in view of Weyl's majorant theorem (see ,e.g., \cite[Theorem~II.3.1]{gokr}). Moreover, this inequality is strict if and only if $T^{-1}$ is normal (and hence self-adjoint since its spectrum is real). 
 In order to compute the trace of $T^{-1}$, consider the positive integral operators
 \begin{align*}
  R_\pm g(x) = \int_\R \E^{-\frac{|x-s|}{2}} g(s) d\omega_\pm(s), \quad x\in\R,~ g\in L^2(\R;\omega_\pm),
 \end{align*}
 in the Hilbert spaces $L^2(\R;\omega_\pm)$, where $\omega=\omega_+ - \omega_-$ is the Hahn--Jordan decomposition of $\omega$.
 Now, if we identify $\Lrom$ with the orthogonal sum of the spaces $L^2(\R;\omega_+)$ and $L^2(\R;\omega_-)$, then we get 
 \begin{align*}
  \tr\; T^{-1} = \tr\; R_+ - \tr\; R_- = \int_\R d\omega_+ - \int_\R d\omega_- = \int_\R d\omega,
 \end{align*}
 in view of the previously mentioned lemma in \cite[Section~XI.4]{RS2}.
\end{proof}

More generally, for each $z\in\rho(T)$ the resolvent is given by
\begin{align}\label{eqnResolvent}
 (T-z)^{-1} g(x) = \int_\R G(z,x,s) g(s) d\om(s), \quad x\in\R,~ g\in\Lrom,
\end{align}
where $G$ is the Green function
\begin{align*}
 G(z,x,y) = W(z)^{-1} \begin{cases}
             \phi_-(z,x) \phi_+(z,y), & y\geq x, \\
             \phi_-(z,y) \phi_+(z,x), & y\leq x.
            \end{cases}
\end{align*}
 In fact, this can be shown following literally the proof of~\cite[Theorem~8.3]{measureSL} since the solution $\phi_\pm(z,\cdot\,)$ lies in the domain of $T$ near $\pm\infty$.
   The measure $\omega$ can be read off from the expansion of the Green function near zero on the diagonal.

\begin{lemma}\label{lemGdotzero}
For every $x\in\R$ we have 
 \begin{align}
  G(z,x,x) = 1+  z \int_\R \E^{-|x-s|} d\om(s) + \OO\left(z^2\right),
 \end{align}
 as $|z|\rightarrow 0$ in $\C$.
\end{lemma}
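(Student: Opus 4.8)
**The plan is to expand the Green function on the diagonal using the definition $G(z,x,x) = W(z)^{-1}\phi_-(z,x)\phi_+(z,x)$ together with the Neumann-series expansions of $\phi_\pm$ near $z=0$.**

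The Green function on the diagonal is $G(z,x,x) = W(z)^{-1}\phi_-(z,x)\phi_+(z,x)$, so everything reduces to expanding the three factors $\phi_-(z,x)$, $\phi_+(z,x)$, and $W(z)$ to first order in $z$ around the origin. First I would record the base point: from the Neumann series we already know $\phi_\pm(0,x) = \E^{\mp x/2}$, so the product $\phi_-(0,x)\phi_+(0,x) = 1$ and $W(0) = \phi_+(0,x)\phi_-'(0,x) - \phi_+'(0,x)\phi_-(0,x) = -1$ (a constant, as the Wronskian of solutions must be). This already confirms the constant term $G(0,x,x) = 1$.

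Next I would compute the first-order coefficients. Writing $\phi_\pm(z,x) = \E^{\mp x/2} m_\pm(z,x)$ with $m_\pm(z,x) = \sum_{n=0}^\infty z^n K_\pm^n 1(x)$, the linear term in $m_\pm$ is $z\,K_\pm 1(x) = \pm z \int_x^{\pm\infty}(\E^{\pm(x-s)}-1)\,d\om(s)$. Hence
\begin{align*}
 \phi_\pm(z,x) = \E^{\mp\frac{x}{2}}\left(1 \pm z\int_x^{\pm\infty}\left(\E^{\pm(x-s)}-1\right)d\om(s)\right) + \OO(z^2).
\end{align*}
Multiplying the two expansions, the $\OO(z)$ coefficient of $\phi_-(z,x)\phi_+(z,x)$ is
\begin{align*}
 \int_x^{+\infty}\left(\E^{x-s}-1\right)d\om(s) - \int_{-\infty}^x\left(\E^{s-x}-1\right)d\om(s),
\end{align*}
and I would likewise extract $\dot W(0)$ by differentiating the Wronskian formula. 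Using Lemma~\ref{lemWderlam}'s mechanism (or direct differentiation of $W(z)$ at a point where it is $x$-independent), one finds the derivative $\dot W(0)$; the evaluation is cleanest at a convenient $x$ or via $-\dot W(0) = \int_\R \phi_-(0,s)\phi_+(0,s)\,d\om(s) = \int_\R d\om$.

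Finally I would assemble $G(z,x,x) = W(z)^{-1}\phi_-(z,x)\phi_+(z,x)$ using $W(z)^{-1} = -1 + \dot W(0)\,z + \OO(z^2)$ (since $W(0)=-1$) and collect the linear coefficient. The arithmetic should combine the two boundary-flux integrals above with the $W$-correction so that the $\pm 1$ pieces inside $(\E^{\pm(x-s)}-1)$ cancel against the contribution from $\dot W(0)$, leaving exactly $\int_\R \E^{-|x-s|}\,d\om(s)$. The main obstacle I anticipate is not any single estimate—the Neumann series converges locally uniformly in $z$ and uniformly in $x$ by~\eqref{eqnKneu}, so term-by-term differentiation in $z$ and the $\OO(z^2)$ control are automatic—but rather the bookkeeping: making the additive $-1$ terms and the $\dot W(0)$ term cancel correctly to produce the clean kernel $\E^{-|x-s|}$, where the split of the real line at $s=x$ into the two exponential branches $\E^{x-s}$ and $\E^{s-x}$ must be matched against the Wronskian normalization with the right signs.
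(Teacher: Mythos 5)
Your overall strategy is exactly the paper's: expand $m_\pm$ to first order via the Neumann series, deduce the expansions of $\phi_-\phi_+$ and of $W$ near $z=0$, and assemble $G(z,x,x)=W(z)^{-1}\phi_-(z,x)\phi_+(z,x)$. However, as written the computation contains a sign error that would make it fail. From $\phi_\pm(0,x)=\E^{\mp x/2}$ and $\phi_\pm'(0,x)=\mp\tfrac12\E^{\mp x/2}$ one gets
\begin{align*}
 W(0) \;=\; \phi_+(0,x)\phi_-'(0,x)-\phi_+'(0,x)\phi_-(0,x)\;=\;\tfrac12+\tfrac12\;=\;1,
\end{align*}
not $-1$; with your value the constant term of $G(z,x,x)$ would be $-1$, contradicting the claim you say is ``confirmed''. (Indeed the paper finds $W(z)=1-z\int_\R d\om+\OO(z^2)$, consistent with $W(z)=\prod_\lambda(1-z/\lambda)$.) A second sign slip occurs when you convert $-z\int_x^{-\infty}$ into $-z\int_{-\infty}^x$: under the paper's orientation convention these differ by a sign, and the first-order coefficient of $\phi_-\phi_+$ is the \emph{sum} $\int_{[x,\infty)}(\E^{x-s}-1)\,d\om+\int_{(-\infty,x)}(\E^{s-x}-1)\,d\om=\int_\R\E^{-|x-s|}d\om-\int_\R d\om$, not the difference you wrote. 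With the correct signs the assembly is immediate: $W(z)^{-1}=1+z\int_\R d\om+\OO(z^2)$, and the $-\int_\R d\om$ from the product cancels against the $+\int_\R d\om$ from $W^{-1}$, leaving $1+z\int_\R\E^{-|x-s|}d\om+\OO(z^2)$. You explicitly deferred this cancellation as ``bookkeeping'', but it is precisely the step where your signs, as set up, do not close; your value of $\dot W(0)=-\int_\R d\om$ is correct, though you should justify it by direct expansion (as the paper does) rather than by invoking Lemma~\ref{lemWderlam}, which is stated only at eigenvalues.
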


\begin{proof}
For each $x\in\R$ we get from the Neumann series~\eqref{eqnNeumannseries}
\begin{align}\label{eqnExpm}
 m_\pm(z,x) & = 1 \pm z \int_{x}^{\pm\infty} \left(\E^{\pm(x-s)}-1\right) d\om(s) + \OO\left(z^2\right),
\end{align}
 as $z\rightarrow 0$ in $\C$ and hence  
\begin{align*}
 \phi_-(z,x)\phi_+(z,x) & = 1 + z \int_\R \E^{-|x-s|}d\omega(s) - z\int_\R d\omega + \OO\left(z^2\right),
\end{align*}
 as $z\rightarrow 0$ in $\C$. Since this first expansion holds uniformly for all $x\in\R$, we get from equation~\eqref{eqnIntEqnDer} 
\begin{align*}
 m_\pm'(z,x) & = z \int_x^{\pm\infty} \E^{\pm(x-s)} d\om(s) + \OO\left(z^2\right),
\end{align*}
 for every $x\in\R$ as $z\rightarrow 0$ in $\C$.
 Therefore we have 
\begin{align*}
 W(z) & = m_+(z,x) m_-'(z,x) - m_+'(z,x)m_-(z,x) + m_-(z,x) m_+(z,x) \\
            & = 1 - z \int_\R d\om + \OO\left(z^2\right),
\end{align*}
proving the first claim. In particular we have
\begin{align*}
 W(z)^{-1} = 1 + z \int_\R d\om + \OO\left(z^2\right),
\end{align*}
and thus we finally get
\begin{align*}
 \frac{\phi_-(z,x)\phi_+(z,x)}{W(z)} & = 1 + z \int_\R \E^{-|x-s|}d\omega(s) + \OO\left(z^2\right),
\end{align*}
for every $x\in\R$ as $z\rightarrow 0$ in $\C$.
\end{proof}

Note that the quantity 
\begin{align}\label{defy}
 u(x) = \frac{1}{2} \int_\R \E^{-|x-s|} d\om(s), \quad x\in\R
\end{align}
appearing in Lemma~\ref{lemGdotzero} is important in view of applications to the Camassa--Holm equation, since it is the unique solution of $u-u_{xx}=\omega$ in $H^1(\R)$.

\section{Exponential growth of solutions}\label{secETZ}

In order to apply the inverse uniqueness result from~\cite{LeftDefiniteSL} we need to show that the solution $\phi_\pm$ is actually of exponential type zero, that is, 
\begin{align*}
 \ln^+|\phi_\pm(z,c)| = \oo(|z|), 
\end{align*}
as $|z|\rightarrow\infty$ in $\C$ for every $c\in\R$. To this end we fix some $c\in\R$ and restrict $T$ to the intervals $I_{c,-}=(-\infty,c)$ and $I_{c,+}=[c,\infty)$ by imposing a Dirichlet boundary condition at $c$. More precisely, the differential operator $T_{c,\pm}$ in the Hilbert space $L^2(I_{c,\pm};|\omega|)$ is given by 
\begin{align*}
 \dom{T_{c,\pm}} = \lbrace f\in\Deftau \,|\, f,\,\tau f\in L^2(I_{c,\pm};|\omega|),~\BC_{\pm\infty}(f) = f(c)=0 \rbrace
\end{align*}
and $T_{c,\pm} f = \tau f$ for $f\in\dom{T_{c,\pm}}$. Hereby note that the operator $T_{c,+}$ is actually multi-valued provided that $\omega(\lbrace c\rbrace)\not=0$ (but see \cite[Corollary~7.4]{measureSL} for details). Nevertheless, if we denote with $J_{c,\pm}$ the restriction of $J$ to $L^2(I_{c,\pm};|\omega|)$, then the (possibly multi-valued) operator $J_{c,\pm} T_{c,\pm}$ turns out to be self-adjoint in $L^2(I_{c,\pm};|\omega|)$. 
 Moreover, we will write $T_c=T_{c,-}\oplus T_{c,+}$ for the corresponding (in general multi-valued) operator in $L^2(I_{c,-};|\omega|)\oplus L^2(I_{c,+};|\omega|) = \Lrom$.

As before, the existence of the real entire solution $\phi_\pm$ guarantees that the spectrum of this operator $T_{c,\pm}$ is purely discrete and that its eigenvalues are precisely the zeros of the entire function $\phi_\pm(\,\cdot\,,c)$
\begin{align}\label{eqnspecTDc}
 \sigma(T_{c,\pm}) = \{ \mu\in\C \,|\, \phi_\pm(\mu,c)=0\}.
\end{align}
Moreover, this spectrum is real in view of equation~\eqref{eqnintpar}.

\begin{theorem}\label{thmPHIrep}
 The solution $\phi_\pm$ is of exponential type zero and given by 
\begin{align}
 \phi_\pm(z,x) = \E^{\mp\frac{x}{2}} \prod_{\mu\in \sigma(T_{x,\pm})} \biggl(1-\frac{z}{\mu}\biggr), \quad z\in\C,~x\in\R.
\end{align}
\end{theorem}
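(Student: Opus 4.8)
The plan is to obtain the claimed identity as the Hadamard product of the entire function $\phi_\pm(\,\cdot\,,x)$ and then to read off exponential type zero from the factorization. By Theorem~\ref{thmSpecResPHI} the map $z\mapsto\phi_\pm(z,x)$ is real entire of finite exponential type, hence of order at most one, and by~\eqref{eqnspecTDc} its zeros are precisely the eigenvalues $\mu\in\sigma(T_{x,\pm})$. Since $\phi_\pm(0,x)=\E^{\mp x/2}\neq0$, the origin is not a zero. The first task is thus to control the zero set well enough to invoke a genus-zero product, and the second is to pin down the accompanying exponential factor.

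For the first task I would argue, exactly as in the proof of Proposition~\ref{propInverse}, that the half-line operator $T_{x,\pm}$ is boundedly invertible with trace class inverse: $J_{x,\pm}T_{x,\pm}$ is positive and self-adjoint with an integral inverse having continuous kernel, so the lemma on page~65 in~\cite[Section~XI.4]{RS2} gives $\tr\,(J_{x,\pm}T_{x,\pm})^{-1}<\infty$, and by Weyl's majorant theorem the moduli of the eigenvalues $\mu^{-1}$ of $T_{x,\pm}^{-1}$ are summable. Hence $\sum_{\mu\in\sigma(T_{x,\pm})}|\mu|^{-1}<\infty$, so the zero sequence has genus zero, the canonical product $P_\pm(z,x)=\prod_{\mu\in\sigma(T_{x,\pm})}(1-z/\mu)$ converges to an entire function, and Hadamard's theorem (with $E_0(w)=1-w$ and a linear exponent) yields $\phi_\pm(z,x)=\E^{\mp x/2}\,\E^{b(x)z}P_\pm(z,x)$ for some $b(x)\in\R$, real because $\phi_\pm$ is real entire.

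To determine $b(x)$ I would compare the coefficients of $z$ in the Taylor expansions at the origin. The Neumann expansion~\eqref{eqnExpm} gives $\dot\phi_\pm(0,x)/\phi_\pm(0,x)=\dot m_\pm(0,x)=\int_{I_{x,\pm}}(\E^{-|s-x|}-1)\,d\om(s)$, whereas the product representation gives $\dot\phi_\pm(0,x)/\phi_\pm(0,x)=b(x)-\sum_{\mu}\mu^{-1}$. Now $\sum_{\mu}\mu^{-1}=\tr\,T_{x,\pm}^{-1}$, which—evaluated from the diagonal of the Green function of $T_{x,\pm}$ at $z=0$ exactly as in Proposition~\ref{propInverse}, using $\phi_\pm(0,\cdot)=\E^{\mp\,\cdot/2}$ together with the Dirichlet solution at $x$—equals $\int_{I_{x,\pm}}(1-\E^{-|s-x|})\,d\om(s)=-\dot m_\pm(0,x)$. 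Substituting into the two expressions for the logarithmic derivative forces $b(x)=0$, so that $\phi_\pm(z,x)=\E^{\mp x/2}P_\pm(z,x)$.

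It then remains to see that $P_\pm(\,\cdot\,,x)$, and hence $\phi_\pm(\,\cdot\,,x)$, is of exponential type zero. Writing $n(r)$ for the number of $\mu$ with $|\mu|\leq r$, summability $\sum|\mu|^{-1}<\infty$ yields both $n(r)=\oo(r)$ and $\int^\infty n(r)r^{-2}\,dr<\infty$; bounding $\ln|P_\pm(z,x)|\leq\sum_\mu\ln(1+r/|\mu|)$ for $|z|=r$ and integrating by parts, these two facts give $\ln|P_\pm(z,x)|=\oo(r)$ uniformly in the argument of $z$, which is exponential type zero. The step I expect to be the main obstacle is the determination of $b(x)$: it rests on getting the trace formula for $T_{x,\pm}^{-1}$ exactly right, including the Green function normalization and the bookkeeping of the possible Dirac mass $\om(\{x\})$ that renders $T_{x,+}$ multi-valued, since a single sign error there destroys the cancellation $b(x)=0$. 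The type-zero estimate, by contrast, is a routine consequence of $\sum|\mu|^{-1}<\infty$.
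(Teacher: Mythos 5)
Your proposal follows essentially the same route as the paper's proof: trace-class invertibility of $T_{x,\pm}$ gives summability of the reciprocal eigenvalues, Hadamard factorization then leaves only a linear exponential factor, and that factor is killed by matching the first Taylor coefficient of $\phi_\pm(\,\cdot\,,x)$ at the origin (via the Neumann expansion) against the trace of $T_{x,\pm}^{-1}$. Your added verification that a genus-zero canonical product with summable zeros has exponential type zero is a standard fact the paper leaves implicit; otherwise the arguments coincide.
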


\begin{proof}
First of all, \cite[Theorem~8.3]{measureSL} shows that the inverse of the (possibly multi-valued) operator $J_{c,\pm} T_{c,\pm}$ is given by 
\begin{align*}
 (J_{c,\pm} T_{c,\pm})^{-1} g(x) = \pm \int_{c}^{\pm\infty} \left( \E^{-\frac{|x-s|}{2}} - \E^{\pm \left(c - \frac{x+s}{2}\right)} \right) g(s)d|\omega|(s), \quad x\in I_{c,\pm}
\end{align*}
for each $g\in L^2(I_{c,\pm};|\omega|)$. 
Now as in the proof of Proposition~\ref{propInverse} one shows that $(J_{c,\pm} T_{c,\pm})^{-1}$ and hence also $T_{c,\pm}$ is a trace class operator with trace given by
\begin{align*}
 \sum_{\mu\in \sig(T_{c,\pm})} \frac{1}{\mu} = \tr\; (T_{c,\pm})^{-1} = \pm \int_{c}^{\pm\infty} \left(1-\E^{\pm(c-s)}\right) d\omega(s).
\end{align*}
Moreover, since $\phi_\pm(\cdot,c)$ is of finite exponential type with summable zeros (note that all of them are simple in view of~\eqref{eqnExpm}), the Hadamard factorization shows that 
\begin{align}\tag{$*$}\label{eqnPHIreppre}
 \phi_\pm(z,c) = \phi_\pm(0,c) \E^{A_{c,\pm} z} \prod_{\mu\in \sigma(T_{c,\pm})} \biggl( 1-\frac{z}{\mu}\biggr), \quad z\in\C,
\end{align}
for some $A_{c,\pm}\in\R$.
 Using the Neumann series~\eqref{eqnNeumannseries} near zero and the representation~\eqref{eqnPHIreppre} on the other side we get
\begin{align*}
\pm \int_{c}^{\pm\infty} \left(1-\E^{\pm(c-s)}\right)d\omega(s) = -\frac{\dot{\phi}_\pm(0,c)}{\phi_\pm(0,c)} = \sum_{\mu\in \sigma(T_{c,\pm})}\frac{1}{\mu} - A_{c,\pm}.
\end{align*}
Since the integral on the left-hand side is equal to the trace of $T_{c,\pm}^{-1}$ we conclude that $A_{c,\pm}=0$, which yields the claimed representation for $\phi_\pm(\,\cdot\,,c)$. In particular, this shows that $\phi_\pm(\,\cdot\,,c)$ is of exponential type zero. 
\end{proof}

An analogous definition as above can be made with a Neumann boundary condition at $c$ and the corresponding operator $T'_{c,\pm}$ is given by
\begin{align*}
 \dom{T'_{c,\pm}} = \lbrace f\in\Deftau \,|\, f,\, \tau f\in L^2(I_{c,\pm};|\omega|),~\BC_{\pm\infty}(f) = f'(c)=0 \rbrace
\end{align*}
and $T'_{c,\pm} f = \tau f$ for $f\in\dom{T'_{c,\pm}}$. Its eigenvalues are precisely the zeros of the real entire functions $\phi_\pm'(\,\cdot\,,c)$
\begin{align}\label{eqnspecTNc}
 \sigma(T'_{c,\pm}) = \{ \nu\in\C \,|\, \phi_\pm'(\nu,c)=0\},
\end{align}
which are real for a similar reason as above.
 Now in much the same manner as in the proof of Theorem~\ref{thmPHIrep} one may show that the entire function $\phi_\pm'(\,\cdot\,,c)$ is of exponential type zero as well with
\begin{align}
 \phi_\pm'(z,x) = \mp \frac{1}{2} \E^{\mp\frac{x}{2}} \prod_{\nu\in \sigma(T'_{x,\pm})} \biggl( 1-\frac{z}{\nu}\biggr), \quad z\in\C,~x\in\R.
\end{align} 
 From these results we also get a product representation for the Wronskian $W$.

\begin{corollary}\label{corWrep}
 The Wronskian $W$ has the product representation
\begin{align}
 W(z) = \prod_{\lambda\in\sigma(T)} \biggl(1-\frac{z}{\lambda}\biggr), \quad z\in\C.
\end{align}
\end{corollary}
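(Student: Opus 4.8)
The plan is to obtain the product representation for $W$ by combining the Hadamard factorizations already established for the individual solutions in Theorem~\ref{thmPHIrep} with the expansion of $W$ near zero computed in Lemma~\ref{lemGdotzero}. The key structural facts I would assemble are: first, that $W$ is real entire of exponential type zero (since both $\phi_\pm$ and $\phi_\pm'$ are, by Theorem~\ref{thmPHIrep} and the analogous statement for the Neumann operators, and $W$ is a finite algebraic combination of these); second, that the zeros of $W$ are precisely the eigenvalues $\sigma(T)$, each simple; and third, that these zeros are summable, which follows from the trace class property in Proposition~\ref{propInverse}, namely $\sum_{\lambda\in\sigma(T)}|\lambda|^{-1}\leq\int_\R d|\omega|<\infty$.

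First I would invoke the Hadamard factorization theorem: an entire function of exponential type zero with simple, summable zeros $\{\lambda\}$ and no zero at the origin (recall $W(0)=1$ from Lemma~\ref{lemGdotzero}) admits the representation
\begin{align*}
 W(z) = \E^{Bz} \prod_{\lambda\in\sigma(T)} \biggl(1-\frac{z}{\lambda}\biggr), \quad z\in\C,
\end{align*}
for some constant $B\in\R$ (real because $W$ is real entire and its zeros are real). This mirrors exactly the strategy used for $\phi_\pm(\,\cdot\,,c)$ in Theorem~\ref{thmPHIrep}, where an a priori unknown exponential factor $\E^{A_{c,\pm}z}$ appears and is then shown to vanish.

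The remaining task is therefore to show $B=0$. I would do this by matching the linear term in the Taylor expansion at zero. From Lemma~\ref{lemGdotzero} we have the expansion $W(z) = 1 - z\int_\R d\omega + \OO(z^2)$, so $\dot W(0)/W(0) = -\int_\R d\omega$. On the other hand, differentiating the logarithm of the product representation gives $\dot W(0)/W(0) = B - \sum_{\lambda\in\sigma(T)}\lambda^{-1}$. But by Proposition~\ref{propInverse} the trace identity reads $\sum_{\lambda\in\sigma(T)}\lambda^{-1} = \int_\R d\omega$, so the two expressions for the logarithmic derivative at zero agree precisely when $B=0$.

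The main obstacle here is not any single deep step but rather the careful verification that $W$ genuinely has exponential type zero and summable simple zeros, so that Hadamard's theorem applies with the genus-zero product; this is where the preparatory results must be marshalled exactly. Once those hypotheses are in place, the computation of $B$ via the first-order coefficient is routine, and the agreement of the two trace formulas — one from the resolvent expansion, one from the spectral sum — is what closes the argument. I expect the proof to be short precisely because Theorem~\ref{thmPHIrep}, Lemma~\ref{lemGdotzero}, and Proposition~\ref{propInverse} have already done the substantive work.
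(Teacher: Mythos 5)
Your proposal is correct, and the overall strategy coincides with the paper's: both apply the Hadamard factorization to $W$, using that $W$ is entire of exponential type zero (as an algebraic combination of $\phi_\pm(\,\cdot\,,x)$ and $\phi_\pm'(\,\cdot\,,x)$, which have type zero by Theorem~\ref{thmPHIrep} and its Neumann analogue), that $W(0)=1$, and that the zeros are exactly the simple, summable eigenvalues $\sigma(T)$. The only divergence is in how the exponential factor $\E^{Bz}$ is eliminated. The paper's one-line proof implicitly uses that the genus-zero canonical product over a summable zero set is itself of exponential type zero, so the quotient $\E^{Bz}$ must also have type zero, forcing $B=0$ with no further computation. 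You instead determine $B$ by matching the first-order Taylor coefficient at the origin, combining $\dot W(0)=-\int_\R d\omega$ from Lemma~\ref{lemGdotzero} with the trace identity $\sum_{\lambda\in\sigma(T)}\lambda^{-1}=\int_\R d\omega$ from Proposition~\ref{propInverse}. That computation is correct and mirrors the strategy the paper genuinely needs in Theorem~\ref{thmPHIrep} (where type zero is a conclusion rather than a hypothesis), but here it is redundant: once type zero of $W$ is in hand, the constant vanishes for free. Your route costs two extra preparatory results; the paper's costs only the observation about the type of a genus-zero product.
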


\begin{proof}
 Since $W$ is of exponential type zero with $W(0) = 1$ and summable roots, this follows from the Hadamard factorization.
\end{proof}

From the product representations of the functions in Theorem~\ref{thmPHIrep} and Corollary~\ref{corWrep} one sees that it is possible to express the quantity  in~\eqref{defy} in terms of the spectra $\sigma(T_{x,-})$, $\sigma(T_{x,+})$, $x\in\R$ and $\sigma(T)$.

\begin{corollary}\label{coruthreespec} 
 For each $x\in\R$ we have
\begin{align}
  \int_\R \E^{-|x-s|}d\omega(s) = \sum_{\lambda\in\sigma(T)} \frac{1}{\lambda} - \sum_{\mu\in \sigma(T_{x,-})} \frac{1}{\mu} - \sum_{\mu\in \sigma(T_{x,+})} \frac{1}{\mu}, \quad x\in\R.
\end{align}
\end{corollary}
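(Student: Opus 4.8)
The plan is to read off the desired trace identity directly from the product representations established in Theorem~\ref{thmPHIrep} and Corollary~\ref{corWrep}, combined with the expansion of the Green function on the diagonal from Lemma~\ref{lemGdotzero}. Since all the hard analytic work (exponential type zero, Hadamard factorization, summability of the spectra) has already been done, the remaining task is essentially a logarithmic-derivative computation at $z=0$. Concretely, I would compare the coefficient of $z$ in the small-$z$ expansion of $\phi_-(z,x)\phi_+(z,x)/W(z)$, which by Lemma~\ref{lemGdotzero} equals $1 + z\int_\R \E^{-|x-s|}d\om(s) + \OO(z^2)$, against the coefficient of $z$ obtained from differentiating the product formulas logarithmically.

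First I would write, using Theorem~\ref{thmPHIrep}, that for fixed $x\in\R$,
\begin{align*}
 \phi_-(z,x)\phi_+(z,x) = \prod_{\mu\in\sigma(T_{x,-})}\biggl(1-\frac{z}{\mu}\biggr)\prod_{\mu\in\sigma(T_{x,+})}\biggl(1-\frac{z}{\mu}\biggr),
\end{align*}
where the exponential prefactors $\E^{+x/2}$ and $\E^{-x/2}$ cancel. Taking the logarithmic derivative at $z=0$, and using that each product is entire of exponential type zero with summable zeros, one gets
\begin{align*}
 \frac{d}{dz}\log\bigl(\phi_-(z,x)\phi_+(z,x)\bigr)\Big|_{z=0} = -\sum_{\mu\in\sigma(T_{x,-})}\frac{1}{\mu} - \sum_{\mu\in\sigma(T_{x,+})}\frac{1}{\mu}.
\end{align*}
Similarly, Corollary~\ref{corWrep} gives
\begin{align*}
 \frac{d}{dz}\log W(z)\Big|_{z=0} = -\sum_{\lambda\in\sigma(T)}\frac{1}{\lambda}.
\end{align*}

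Next I would form the logarithmic derivative of the quotient at $z=0$, which is the difference of the two expressions above, namely $\sum_{\lambda\in\sigma(T)}\frac{1}{\lambda} - \sum_{\mu\in\sigma(T_{x,-})}\frac{1}{\mu} - \sum_{\mu\in\sigma(T_{x,+})}\frac{1}{\mu}$. On the other hand, since $G(z,x,x) = \phi_-(z,x)\phi_+(z,x)/W(z)$ and this quotient equals $1+\OO(z)$ at $z=0$, its logarithmic derivative at $z=0$ coincides with the coefficient of $z$ in its Taylor expansion, which Lemma~\ref{lemGdotzero} identifies as $\int_\R \E^{-|x-s|}d\om(s)$. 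Equating the two computations of this coefficient yields precisely the claimed identity. The only point requiring a little care is the justification that the logarithmic derivative of an infinite product of exponential-type-zero factors may be taken termwise at $z=0$; but this is standard and is guaranteed by the summability of the reciprocals of the eigenvalues, which is already known from the trace-class property in Proposition~\ref{propInverse} and Theorem~\ref{thmPHIrep}. I do not anticipate any genuine obstacle here, as the statement is a direct corollary of the factorizations; the bookkeeping of the exponential prefactors cancelling in the product $\phi_-\phi_+$ is the only mild subtlety.
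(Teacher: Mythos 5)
Your proposal is correct and is precisely the argument the paper intends: the paper's proof of this corollary is the one-line remark that it "follows from Lemma~\ref{lemGdotzero} and the product representations of the functions in Theorem~\ref{thmPHIrep} and Corollary~\ref{corWrep}," which is exactly your comparison of the linear coefficients of $G(z,x,x)=\phi_-(z,x)\phi_+(z,x)/W(z)$ at $z=0$. Your bookkeeping (cancellation of the prefactors $\E^{\pm x/2}$, termwise logarithmic differentiation justified by summability of the reciprocal eigenvalues) just fills in the details the paper leaves implicit.
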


\begin{proof}
 This follows from Lemma~\ref{lemGdotzero} and the product representations of the functions in Theorem~\ref{thmPHIrep} and Corollary~\ref{corWrep}.
\end{proof}

\section{The left-definite operator}\label{secLD}

In this section we will introduce the left-definite operator, associated with the isospectral problem~\eqref{eqnISOPdiffeqn}. 
Therefore consider the Sobolev space $H^1(\R)$, equipped with the modified inner product
\begin{align*}
 \spr{f}{g}_{H^1(\R)} = \frac{1}{4} \int_\R f(x) g(x)^\ast dx + \int_\R f'(x) g'(x)^\ast dx, \quad f,\, g\in H^1(\R).
\end{align*}
We define the (in general multi-valued) operator $S$ in $H^1(\R)$ by specifying its graph to be 
\begin{align*}
 \lbrace (f,f_\tau)\in H^1(\R) \times H^1(\R) \,|\, f\in\Deftau,~\tau f = f_\tau ~\text{in }L_{\mathrm{loc}}^1(\R;|\omega|)\rbrace.
\end{align*}
From results in~\cite[Section~3]{LeftDefiniteSL} it follows that $S$ is a self-adjoint operator which is multi-valued
unless the support of $\omega$ is dense. In this case we can still obtain a single-valued operator, the operator
part of $S$, by restricting $S$ to the closure of its domain.
Next, since $S$ is a self-adjoint realization of the differential equation~\eqref{eqnISOPdiffeqn}, it is not surprising that the spectral properties of $S$ and $T$ are very similar. 

 \begin{proposition}\label{propInverseS}
  The linear operator $S$ has the same spectrum as $T$ and its inverse is given by
  \begin{align}\label{eqnSinv}
   S^{-1} g(x) = \int_\R \E^{-\frac{|x-s|}{2}} g(s) d\omega(s), \quad x\in\R,~ g\in H^1(\R).
  \end{align}
 \end{proposition}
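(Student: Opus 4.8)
The plan is to exploit that the integral kernel in the claimed formula is exactly the Green function of $\tau$ at the spectral point $z=0$. Indeed, the Neumann series~\eqref{eqnNeumannseries} gives $\phi_\pm(0,x)=\E^{\mp x/2}$, so $W(0)=1$ and the Green function of~\eqref{eqnResolvent} specializes to $G(0,x,y)=\E^{-|x-y|/2}$. Hence the candidate inverse
\[
 R g(x) = \int_\R \E^{-\frac{|x-s|}{2}} g(s)\, d\om(s), \quad x\in\R,
\]
is the same integral operator as $T^{-1}$ from Proposition~\ref{propInverse}, now read in $H^1(\R)$. First I would verify that $R$ inverts $S$, and only afterwards compare the spectra.

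For the inversion, fix $g\in H^1(\R)$. Since $H^1(\R)\hookrightarrow C_0(\R)$ and $\om$ is finite, $g\,d\om$ is a finite signed Borel measure, and $Rg$ is its convolution with $\E^{-|\cdot|/2}$, the Green function of $-\frac{d^2}{dx^2}+\frac14$. A Fourier estimate using that $\widehat{g\,d\om}$ is bounded together with $\int_\R(\frac14+\xi^2)^{-1}\,d\xi<\infty$ (or the measure-theoretic argument of~\cite{measureSL}) shows $Rg\in H^1(\R)$. Applying $-\frac{d^2}{dx^2}+\frac14$ to $Rg$ reproduces the measure $g\,d\om$, whose Radon--Nikod\'ym derivative with respect to $\om$ equals $g$; hence $Rg\in\Deftau$ with $\tau(Rg)=g$. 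Since moreover $g\in H^1(\R)$, the pair $(Rg,g)$ lies in $\graph{S}$, and as $g$ was arbitrary this yields $\ran{S}=H^1(\R)$. This step parallels the proof of Proposition~\ref{propInverse} and relies on~\cite[Theorem~8.3]{measureSL} and~\cite[Section~3]{LeftDefiniteSL}.

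Next I would establish injectivity and identify the spectrum. If $(f,0)\in\graph{S}$ then $\tau f=0$, so $f$ is a combination of $\phi_+(0,\cdot)=\E^{-x/2}$ and $\phi_-(0,\cdot)=\E^{x/2}$; since neither decays at both endpoints, $f=0$ and thus $\ker(S)=\{0\}$. Being self-adjoint with trivial kernel and full range, $S$ satisfies $0\in\rho(S)$ with $S^{-1}=R$. Moreover $R=S^{-1}$ is compact, because $(-\frac{d^2}{dx^2}+\frac14)^{-1}$ maps finite measures into $H^1(\R)$ compactly, just as $T^{-1}$ is compact in Proposition~\ref{propInverse}; hence $\sigma(S)$ is purely discrete. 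Now $\lambda\in\sigma(T)$ if and only if $W(\lambda)=0$, i.e.\ $\phi_+(\lambda,\cdot)\propto\phi_-(\lambda,\cdot)$, and the resulting common solution decays at both endpoints, so it lies in $\Lrom$ and in $H^1(\R)$ simultaneously and is therefore an eigenfunction of $S$ as well. Conversely, an $H^1(\R)$-eigenfunction of $S$ for $\lambda$ must be proportional to $\phi_+(\lambda,\cdot)$ near $+\infty$ and to $\phi_-(\lambda,\cdot)$ near $-\infty$, again forcing $W(\lambda)=0$. Thus $\sigma(S)=\{\lambda\in\C\,|\,W(\lambda)=0\}=\sigma(T)$.

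The main obstacle is the domain membership in the second paragraph: for an arbitrary finite signed measure $\om$ (possibly atomic or singular) one must rigorously justify both that $Rg\in H^1(\R)$ and that the Radon--Nikod\'ym identity $\tau(Rg)=g$ holds in the precise measure-theoretic sense of~\cite{measureSL}, so that $(Rg,g)\in\graph{S}$. Once this is in place, the triviality of $\ker(S)$, the compactness of $R$, and the matching of eigenfunctions are routine; the remaining bookkeeping---including the passage to the operator part of $S$ when $\supp\om$ fails to be dense---follows the general left-definite framework of~\cite{LeftDefiniteSL}.
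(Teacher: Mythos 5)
Your proposal is correct in substance but follows a genuinely different route from the paper. The paper obtains discreteness of $\sigma(S)$ from the singular Weyl--Titchmarsh machinery of \cite{LeftDefiniteSL} (the entire solutions $\phi_\pm$ make each singular Weyl function meromorphic with poles in $\sigma(T)\cup\{0\}$, and \cite[Equation~(4.5)]{LeftDefiniteSL} transfers this to $z\mapsto\spr{(S-z)^{-1}g}{g}$), invokes \cite[Proposition~2.7]{LeftDefiniteSL} to exclude $0$ from the point spectrum, and verifies the formula \eqref{eqnSinv} only for compactly supported $g$, extending by continuity. You instead verify directly that $R g=\int_\R \E^{-|x-s|/2}g(s)\,d\om(s)$ satisfies $(Rg,g)\in\graph{S}$ for \emph{every} $g\in H^1(\R)$ via the Fourier/convolution estimate, deduce $\ker S=\{0\}$ and $\ran S=H^1(\R)$ by hand, and get discreteness from compactness of $R$. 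This is more self-contained and avoids the abstract resolvent formula from \cite{LeftDefiniteSL}, at the price of having to do the measure-theoretic bookkeeping (which you correctly flag as the delicate step) for arbitrary rather than compactly supported $g$.

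Two soft spots deserve attention. First, your compactness justification as literally stated is wrong: $(-\frac{d^2}{dx^2}+\frac14)^{-1}$ does \emph{not} map the space of all finite measures compactly into $H^1(\R)$ (translates of a point mass give a bounded, non-precompact image). What is true, and what you need, is that the composed map $g\mapsto k\ast(g\,d\om)$ is compact because the fixed finite measure $\om$ provides tightness: if $g_n\rightharpoonup 0$ in $H^1(\R)$ then $g_n\to 0$ boundedly and locally uniformly, so $\|g_n\,d\om\|$ tends to $0$ in total variation by dominated convergence, and hence $\|Rg_n\|_{H^1(\R)}\to 0$. Second, in the converse spectral inclusion you assert that an $H^1(\R)$-eigenfunction of $S$ must be proportional to $\phi_\pm(\lambda,\cdot\,)$ near $\pm\infty$; this requires knowing that at most one solution (up to scalars) of $(\tau-\lambda)u=0$ lies in $H^1$ near each endpoint. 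This is true here -- if two independent solutions were both in $H^1$ near $+\infty$, their Wronskian would have to tend to zero, contradicting its constancy -- but it is not automatic (for instance, in the limit-circle case all solutions lie in $\Lrom$ near that endpoint), so it must be stated and proved or cited from \cite{LeftDefiniteSL}. With these two points repaired, your argument is complete and yields the same conclusion as the paper's proof.
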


 \begin{proof}
  From Theorem~\ref{thmSpecResPHI} and the remark after~\cite[Lemma~4.5]{LeftDefiniteSL} we infer that $S$ has purely discrete spectrum. 
 In fact, since $\phi_\pm(z,\cdot\,)$, $z\in\C$ are real entire solutions which lie in $H^1(\R)$ near $\pm\infty$, each associated singular Weyl--Titchmarsh function (see \cite[Section~4]{LeftDefiniteSL}) is meromorphic in $\C$ with poles contained in $\sigma(T)\cup\lbrace 0\rbrace$. 
 Now in view of \cite[Equation~(4.5)]{LeftDefiniteSL}, for each $g$ in a dense subspace of $H^1(\R)$, the function $z\mapsto\spr{(S-z)^{-1}g}{g}$ is meromorphic in $\C$ as well with poles contained in $\sigma(T)\cup\lbrace 0\rbrace$, which shows that $S$ has purely discrete spectrum. 
 Moreover, the fact that zero is not an eigenvalue of $S$ follows from~\cite[Proposition~2.7]{LeftDefiniteSL}. Now since for each $\lambda\in\R$ the solution $\phi_\pm(\lambda,\cdot\,)$ lies in the domain of $S$ near $\pm\infty$, we infer that $\lambda$ is an eigenvalue of $S$ if and only if these solutions are linearly dependent. Hence the spectra of $S$ and $T$ are equal. Finally, if $g\in H^1(\R)$ has compact support, then $S^{-1} g$ is given as in the claim since the function on the right-hand side of~\eqref{eqnSinv} is a solution of $\tau f = g$ which lies in $H^1(\R)$. The general case, when $g\in H^1(\R)$ follows from continuity of both sides in~\eqref{eqnSinv}.
 \end{proof}

  Although, this is all we need in order to apply the inverse uniqueness result from~\cite{LeftDefiniteSL} to the operator $T$, we will furthermore show how $S$ and $T$ are related. Therefore recall that with each strictly positive self-adjoint operator in a Krein space one can associate a so-called left-definite operator in some Hilbert space; see \cite[Section~11.4]{zettl} for a discussion which is close to our situation. The left-definite Hilbert space $H_1$ associated with $T$ is the domain of $\sqrt{JT}$ equipped with the inner product
 \begin{align*}
  \spr{f}{g}_1 = \spr{\sqrt{JT}f}{\sqrt{JT}g}_{\Lrom}, \quad f,\, g\in H_1.
 \end{align*}
 Furthermore, the left-definite operator $S_1$ is obtained by restricting $T$ to the space $H_1$. More precisely $S_1$ is given by
 \begin{align*}
  \dom{S_1} = \ran{T^{-1}|_{H_1}},
 \end{align*}
 and $S_1 f = T f$ for $f\in\dom{S_1}$. It turns out that this operator $S_1$ is self-adjoint in $H_1$. In particular, note that its domain and hence also the domain of $T$ are dense in $H_1$. Moreover, it is known that the spectra of $T$ and $S_1$ are the same. We will now show that one may identify $H_1$ with a closed subspace of $H^1(\R)$ and that the operator $S_1$ is essentially the same as the linear operator $S$ defined above.
 
 \begin{proposition}
  The operator part of $S$ is unitarily equivalent to $S_1$.
 \end{proposition}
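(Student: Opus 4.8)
The plan is to exhibit an explicit unitary operator $U$ from the left-definite space $H_1$ onto the subspace $\overline{\dom{S}}\subseteq H^1(\R)$ carrying the operator part of $S$, built from the observation that the two inner products agree on functions in $\dom{T}$. The starting point is a form identity. A version of the integration by parts formula underlying~\eqref{eqnintpar}, obtained by testing the (measure-valued) second derivative of $f$ against $g$ and valid for $f\in\dom{T}$ and arbitrary $g\in H^1(\R)$, reads
\begin{align*}
 \spr{Tf}{g}_{\Krom} = \int_\R (\tau f)\, g^\ast\, d\om = \frac{1}{4}\int_\R f g^\ast\, dx + \int_\R f' (g')^\ast\, dx = \spr{f}{g}_{H^1(\R)},
\end{align*}
where the boundary terms at $\pm\infty$ vanish because $f$ and $f'$ decay (the asymptotics forced by $\BC_{\pm\infty}(f)=0$) while $g\in H^1(\R)$ tends to zero at $\pm\infty$. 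Since $\dom{T}=\dom{JT}$ and $\spr{f}{g}_1=\spr{\sqrt{JT}f}{\sqrt{JT}g}_{\Lrom}=\spr{JTf}{g}_{\Lrom}=\spr{Tf}{g}_{\Krom}$ for such $f$, specializing to $g\in\dom{T}$ gives $\spr{f}{g}_1=\spr{f}{g}_{H^1(\R)}$. I would therefore define $U$ on the dense subspace $\dom{T}\subseteq H_1$ by letting $Uf$ be the continuous representative $f$, regarded as an element of $H^1(\R)$; the identity shows this is isometric, hence well defined at the level of $H_1$-classes and extendable to an isometry $U\colon H_1\to H^1(\R)$ with $\ran{U}=\overline{\dom{T}}$ (closure in $H^1(\R)$).

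Next I would identify this range with $\overline{\dom{S}}$. Using the orthogonal decomposition $\overline{\dom{S}}=\mul{S}^\perp$ of the self-adjoint relation $S$ together with the description $\mul{S}=\{g\in H^1(\R)\,:\,g=0~|\om|\text{-a.e.}\}$, which is read off directly from the graph of $S$, the form identity gives $\spr{f}{g}_{H^1(\R)}=\spr{Tf}{g}_{\Krom}=0$ for $f\in\dom{T}$ and $g\in\mul{S}$, so $\dom{T}\subseteq\mul{S}^\perp$. Conversely, if $g\in H^1(\R)$ is orthogonal to $\dom{T}$, then $\spr{Tf}{g}_{\Krom}=0$ for all $f\in\dom{T}$, and since $T$ is onto by Proposition~\ref{propInverse} this forces $g=0$ in $\Lrom$, i.e.\ $g\in\mul{S}$. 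Hence $\ran{U}^\perp=\mul{S}$ and $\ran{U}=\mul{S}^\perp=\overline{\dom{S}}$, so $U$ is unitary onto the space of the operator part of $S$.

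For the intertwining I would compare the bounded inverses rather than wrestle with domains and multivaluedness. By Propositions~\ref{propInverse} and~\ref{propInverseS} the operators $T^{-1}$ and $S^{-1}$ are given by the same kernel $\E^{-|x-s|/2}$, so for $h\in\dom{T}$ (where $Uh=h$) the function $T^{-1}h$ again lies in $\dom{T}$, whence $UT^{-1}h=T^{-1}h=\int_\R\E^{-|x-s|/2}h(s)\,d\om(s)=S^{-1}h=S^{-1}Uh$; by continuity $UT^{-1}=S^{-1}U$ on all of $H_1$. Since $S_1^{-1}=T^{-1}|_{H_1}$ and $S^{-1}$ annihilates $\mul{S}$ while restricting on $\overline{\dom{S}}$ to the inverse $S_{\mathrm{op}}^{-1}$ of the operator part, this reads $U S_1^{-1}=S_{\mathrm{op}}^{-1}U$ with $S_1^{-1}$ and $S_{\mathrm{op}}^{-1}$ bounded, injective and self-adjoint; inverting yields $S_{\mathrm{op}}=U S_1 U^{-1}$, the desired unitary equivalence. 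The step I expect to be most delicate is the identification $\ran{U}=\overline{\dom{S}}$: this is exactly where the tension between $H_1$ (classes modulo $|\om|$) and $H^1(\R)$ (genuine functions) must be controlled, the multivalued part of $S$ being precisely the functions vanishing $|\om|$-almost everywhere, and routing the intertwining through the inverses is what lets me avoid the corresponding domain bookkeeping entirely.
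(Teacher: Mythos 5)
Your argument is correct and follows essentially the same route as the paper: the form identity $\spr{Tf}{g}_{\Krom}=\spr{f}{g}_{H^1(\R)}$ isometrically embeds $\dom{T}$ (hence $H_1$) into $H^1(\R)$, the closure is identified with $\overline{\dom{S}}=\mul{S}^\perp$, and the unitary equivalence is read off from the coinciding inverse kernels of Propositions~\ref{propInverse} and~\ref{propInverseS}. The only minor deviation is that you obtain $\overline{\dom{T}}\supseteq\overline{\dom{S}}$ by showing $(\dom{T})^\perp\subseteq\mul{S}$ via surjectivity of $T$ and nondegeneracy of the Krein inner product, whereas the paper proves $\dom{S}\subseteq\dom{T}$ directly; both are sound, and your explicit routing of the intertwining through the bounded inverses just makes precise what the paper leaves to the reader.
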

 
\begin{proof}
  First of all note that $\dom{T}$ may be regarded as a subset of $H^1(\R)$. In fact, each $f\in\dom{T}$ can be written as
  \begin{align*}
   f(x) = \int_\R \E^{-\frac{|x-s|}{2}} g(s)d\omega(s), \quad x\in\R,
  \end{align*} 
  for some $g\in\Lrom$. It is not hard to show that this function actually lies in $H^1(\R)$.
  Moreover, an integration by parts shows that
 \begin{equation}
  \begin{split}\label{eqnLDNormRel}
  \spr{f}{g}_1 & =\spr{f}{JT g}_{\Lrom} = \int_\R f(x) \tau g(x)^\ast d\omega(x) \\
               & = \frac{1}{4} \int_\R f(x)g(x)^\ast dx + \int_\R f'(x) g'(x)^\ast dx  = \spr{f}{g}_{H^1(\R)},
  \end{split}
 \end{equation}
 for each $f$, $g\in\dom{T}$. 
 Hence $\dom{T}$ is even isometrically embedded in $H^1(\R)$ and thus the Hilbert space $H_1$ can be identified with a closed subspace of $H^1(\R)$. 
 Now given some $f\in\dom{T}$ and $g\in\mul{S}$ we have
 \begin{align*}
  \spr{f}{g}_{H^1(\R)} = \spr{T f}{g}_{\Krom} = 0,
 \end{align*}
 since $g$ vanishes almost everywhere with respect to $|\omega|$ (see \cite[Proposition~2.5]{LeftDefiniteSL}). As a consequence $\dom{T}$ and hence also the space $H_1$ are contained in the closure of $\dom{S}$. On the other side, given some $f\in\dom{S}$ there is some function $g\in H^1(\R)$ such that
 \begin{align*}
  f(x) = \int_\R \E^{-\frac{|x-s|}{2}} g(s) d\omega(s), \quad x\in\R.
 \end{align*}
 But since $g$ also lies in $\Lrom$ we infer from Proposition~\ref{propInverse} that $f$ also lies in $\dom{T}$. Thus we see that $H_1$ actually is the closure of the domain of $S$. Finally, since the inverses of $S_1$ and the operator part of $S$ are given as in Proposition~\ref{propInverse} and Proposition~\ref{propInverseS} we infer that they are equal.
 \end{proof}

Finally, as a simple consequence of the results in~\cite{LeftDefiniteSL} we get the following inverse uniqueness theorem for the left/right spectral measure $\rho_\pm$ of $T$ given by
\begin{align*}
 \rho_\pm = \sum_{\lambda\in\sigma(T)} \gamma_{\lambda,\pm}^{-2} \delta_\lambda,
\end{align*}
where $\delta_\lambda$ is the Dirac measure in the point $\lambda$.

\begin{theorem}\label{thmInvUniq}
The measure $\omega$ is uniquely determined by the spectral measure $\rho_\pm$.
\end{theorem}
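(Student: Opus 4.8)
The plan is to reduce the claim to the inverse uniqueness theorem for left-definite Sturm--Liouville operators established in \cite{LeftDefiniteSL}, applied to the operator $S$ (equivalently, to $S_1$). Although $\rho_\pm$ is the spectral measure of the Krein space operator $T$, the key observation is that it carries exactly the same information as the spectral measure of the associated left-definite operator, and it is the latter object to which the results of \cite{LeftDefiniteSL} directly apply. The three steps are: (i) identify the left-definite spectral measure and relate it to $\rho_\pm$; (ii) recall that $\phi_\pm$ is of exponential type zero and that the operator part of $S$ is unitarily equivalent to $S_1$; and (iii) invoke the imported uniqueness theorem. The genuine analytic difficulty has already been discharged in Section~\ref{secETZ}, so the only real care needed here is the bookkeeping in step (i) together with the verification that $\phi_\pm$ is precisely the reference solution used in \cite{LeftDefiniteSL}.

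First I would make the passage from $\rho_\pm$ to the left-definite spectral measure explicit. For an eigenvalue $\lambda\in\sigma(T)$ the solution $\phi_\pm(\lambda,\cdot\,)$ lies in $H^1(\R)$, and equation~\eqref{eqnNormConstH1} identifies its $H^1(\R)$-norm,
\[
 \|\phi_\pm(\lambda,\cdot\,)\|_{H^1(\R)}^2 = \frac{1}{4}\int_\R |\phi_\pm(\lambda,x)|^2 dx + \int_\R |\phi_\pm'(\lambda,x)|^2 dx = \lambda\,\gamma_{\lambda,\pm}^{2} > 0 .
\]
Since the spectral measure of a self-adjoint operator with simple discrete spectrum puts at each eigenvalue the reciprocal of the squared norm of the reference eigenfunction, the left-definite spectral measure of $S_1$ relative to $\phi_\pm$ is
\[
 \rho_{1,\pm} = \sum_{\lambda\in\sigma(T)} \bigl(\lambda\,\gamma_{\lambda,\pm}^{2}\bigr)^{-1}\,\delta_\lambda = \sum_{\lambda\in\sigma(T)} \frac{1}{\lambda}\,\gamma_{\lambda,\pm}^{-2}\,\delta_\lambda ,
\]
so that $d\rho_{1,\pm}(\lambda) = \lambda^{-1}\,d\rho_\pm(\lambda)$. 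By Proposition~\ref{propInverse} we have $0\notin\sigma(T)$, hence multiplication by $\lambda$ is a bijection on the common support, and $\rho_\pm$ and $\rho_{1,\pm}$ determine one another.

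Next I would assemble the structural facts already in hand. By Theorem~\ref{thmSpecResPHI} and Theorem~\ref{thmPHIrep} the solution $\phi_\pm$ is real entire of exponential type zero, which is exactly the qualification needed for it to serve as the reference real entire solution in the singular Weyl--Titchmarsh--Kodaira framework of \cite{LeftDefiniteSL}; with this choice $\rho_{1,\pm}$ is the spectral measure appearing there. Moreover, by the preceding proposition the operator part of $S$ is unitarily equivalent to $S_1$, so the left-definite inverse theory for $T$ is literally the left-definite Sturm--Liouville theory of the operator $S$ in $H^1(\R)$, whose left-hand coefficients are fixed by the standard $H^1(\R)$ inner product. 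The inverse uniqueness theorem of \cite{LeftDefiniteSL} then asserts that this spectral measure uniquely determines the weight measure $\omega$. Combined with step (i), where $\rho_\pm$ was shown to determine $\rho_{1,\pm}$, this proves that $\omega$ is uniquely determined by $\rho_\pm$, for either choice of sign.

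The substance of the argument has thus been externalized. The genuinely hard analytic input---that $\phi_\pm$ is of exponential type zero, which is what renders the de~Branges type uniqueness machinery of \cite{LeftDefiniteSL} applicable---is supplied by Section~\ref{secETZ}, and the uniqueness statement itself is imported. The only point requiring attention within this proof is therefore the identification of $\rho_\pm$ with the left-definite spectral measure through~\eqref{eqnNormConstH1}, and the confirmation that $\phi_\pm$ is indeed the reference solution underlying the spectral measure of \cite{LeftDefiniteSL}; both reduce to routine verification once the exponential-type-zero property and the unitary equivalence are granted.
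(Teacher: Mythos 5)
Your proposal is correct and follows essentially the same route as the paper: the paper's proof likewise applies the inverse uniqueness theorem of \cite{LeftDefiniteSL} to the multi-valued operator $S$ and identifies $\sum_{\lambda\in\sigma(T)}\lambda^{-1}\gamma_{\lambda,\pm}^{-2}\delta_\lambda$ as the left/right spectral measure of $S$ via~\eqref{eqnNormConstH1}. Your extra bookkeeping, that $\rho_\pm$ and this measure determine one another because $0\notin\sigma(T)$, and that the exponential type zero property of $\phi_\pm$ from Section~\ref{secETZ} is the analytic hypothesis making the imported theorem applicable, simply makes explicit what the paper leaves as ``readily verified.''
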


\begin{proof}
This follows by applying~\cite[Theorem~7.5]{LeftDefiniteSL} to the multi-valued operator $S$. The assumptions of this theorem are readily verified. Also note that the positive discrete measure
\begin{align*}
 \sum_{\lambda\in\sigma(T)} \lambda^{-1} \gamma_{\lambda,\pm}^{-2} \delta_\lambda,
\end{align*}
is the left/right spectral measure associated with $S$ in view of~\eqref{eqnNormConstH1}.
\end{proof}

Furthermore, the product representations in the previous section as well as the relations~\eqref{eqnCoup} and~\eqref{eqnWlam} immediately yield the following inverse uniqueness result from three spectra. 

\begin{corollary}
 For each fixed $c\in\R$, the measure $\omega$ is uniquely determined by the three spectra $\sigma(T)$, $\sigma(T_{c,-})$ and $\sigma(T_{c,+})$ provided they are disjoint.
\end{corollary}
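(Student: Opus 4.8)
The plan is to reconstruct the right (or left) spectral measure $\rho_+=\sum_{\lambda\in\sigma(T)}\gamma_{\lambda,+}^{-2}\delta_\lambda$ from the three given spectra and then invoke the inverse uniqueness Theorem~\ref{thmInvUniq}. The point is that all the entire functions attached to the problem at the fixed point $c$ are completely encoded in their zero sets, so the only genuine work is extracting the norming constants $\gamma_{\lambda,+}^2$ from the spectra.

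First I would recover the three relevant entire functions. By Corollary~\ref{corWrep}, the Wronskian is $W(z)=\prod_{\lambda\in\sigma(T)}(1-z/\lambda)$, so $W$ and in particular the numbers $\dot W(\lambda)$ are determined by $\sigma(T)$ alone. Likewise, specializing Theorem~\ref{thmPHIrep} to $x=c$ and using that $\phi_\pm(0,c)=\E^{\mp c/2}$ is an explicitly known constant once $c$ is fixed, the functions $\phi_\pm(\,\cdot\,,c)=\E^{\mp c/2}\prod_{\mu\in\sigma(T_{c,\pm})}(1-z/\mu)$ are determined by the spectra $\sigma(T_{c,\pm})$. Consequently the values $\phi_\pm(\lambda,c)$ are computable from the three spectra for every $\lambda\in\sigma(T)$.

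Next, fix an eigenvalue $\lambda\in\sigma(T)$. Here is where the disjointness hypothesis is used: since $\lambda\notin\sigma(T_{c,+})\cup\sigma(T_{c,-})$, equation~\eqref{eqnspecTDc} guarantees that neither $\phi_+(\lambda,c)$ nor $\phi_-(\lambda,c)$ vanishes. Evaluating the coupling relation~\eqref{eqnCoup} at $x=c$ then yields the coupling constant explicitly as $c_{\lambda,-}=\phi_-(\lambda,c)/\phi_+(\lambda,c)$, and combining this with the identity $-\dot W(\lambda)=c_{\lambda,-}\gamma_{\lambda,+}^2$ from~\eqref{eqnWlam} gives
\[
 \gamma_{\lambda,+}^2 = \frac{-\dot W(\lambda)\,\phi_+(\lambda,c)}{\phi_-(\lambda,c)}.
\]
The right-hand side is a quantity we have just shown to be computable from the three spectra, and it is finite and nonzero in accordance with~\eqref{eqnNorm} and~\eqref{eqnNormConstH1}.

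Therefore the spectral measure $\rho_+$ is completely determined by $\sigma(T)$, $\sigma(T_{c,-})$ and $\sigma(T_{c,+})$, and Theorem~\ref{thmInvUniq} shows that $\omega$ is uniquely determined; interchanging the roles of $+$ and $-$ recovers $\rho_-$ instead. The only step requiring care—and the sole place where disjointness enters—is the nonvanishing of $\phi_\pm(\lambda,c)$ at the eigenvalues $\lambda\in\sigma(T)$, which is exactly what makes the coupling constant $c_{\lambda,-}$, and hence the norming constant $\gamma_{\lambda,+}^2$, well-defined. Everything else is routine bookkeeping with the product representations established in Section~\ref{secETZ}.
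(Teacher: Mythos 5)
Your argument is correct and is essentially the paper's own proof: the paper likewise combines \eqref{eqnCoup}, \eqref{eqnWlam}, Theorem~\ref{thmPHIrep} and Corollary~\ref{corWrep} to express the norming constants through the three spectra (recording the result as the explicit product \eqref{eqngamma}) and then relies on Theorem~\ref{thmInvUniq}. Your formula $\gamma_{\lambda,+}^2=-\dot W(\lambda)\,\phi_+(\lambda,c)/\phi_-(\lambda,c)$ is precisely \eqref{eqngamma} once the product representations are substituted, and you correctly locate the role of the disjointness hypothesis.
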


\begin{proof}
In this case the left/right norming constant $\gamma_{\lambda,\pm}^2$ for some given eigenvalue $\lambda\in\sigma(T)$ can be written down explicitly in terms of these three spectra 
\begin{align}\label{eqngamma}
 \gamma_{\lambda,\pm}^2 
 &=  \frac{\E^{\mp c}}{\lambda} \prod_{\kappa\in\sigma(T)\backslash\lbrace\lambda\rbrace} \biggl(1-\frac{\lambda}{\kappa}\biggr)  \prod_{\mu\in\sigma(T_{c,\pm})}\biggl(1-\frac{\lambda}{\mu}\biggr) \prod_{\mu\in\sigma(T_{c,\mp})} \biggl(1-\frac{\lambda}{\mu}\biggr)^{-1}
\end{align}
  using the equations~\eqref{eqnCoup} and~\eqref{eqnWlam}, Theorem~\ref{thmPHIrep} and Corollary~\ref{corWrep}.
\end{proof}

Similarly, for every $c\in\R$ there is also a left-definite operator in the Sobolev space $H_0^1(I_{c,\pm})$ (with modified norm) corresponding to the operator $T_{c,\pm}$. This (in general multi-valued) self-adjoint operator $S_{c,\pm}$ is again defined via its graph 
\begin{align*}
  \lbrace (f,f_\tau)\in H_0^1(I_{c,\pm})\times H_0^1(I_{c,\pm}) \,|\, f\in\Deftau,~\tau f = f_\tau ~\text{in }L_{\mathrm{loc}}^1(I_{c,\pm};|\omega|)\rbrace.
\end{align*}
 Spectral theory for $S_{c,\pm}$ is closely related to spectral theory for the operator $T_{c,\pm}$.

\begin{proposition}
 For each $c\in\R$ the operator $S_{c,\pm}$ has the same spectrum as $T_{c,\pm}$ and its inverse is given by
 \begin{align}\label{eqnSDinv}
  S_{c,\pm}^{-1} g(x) = \pm \int_c^{\pm\infty} \left( \E^{-\frac{|x-s|}{2}} - \E^{\pm\left(c-\frac{x+s}{2}\right)}\right) g(s)d\omega(s), \quad x\in I_{c,\pm},
 \end{align}
 for all functions $g\in H_0^1(I_{c,\pm})$.
\end{proposition}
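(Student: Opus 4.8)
The plan is to follow the template of Proposition~\ref{propInverseS} verbatim, transposing each step from the whole line to the half-line $I_{c,\pm}$ and replacing the two full-line boundary conditions by a Dirichlet condition at the finite endpoint $c$ (which is exactly what membership in $H_0^1(I_{c,\pm})$ encodes) together with the boundary condition at $\pm\infty$. Since the right-hand side of~\eqref{eqnSDinv} uses precisely the kernel that already appeared for $(J_{c,\pm}T_{c,\pm})^{-1}$ in the proof of Theorem~\ref{thmPHIrep}, only now integrated against $d\omega$ instead of $d|\omega|$, the whole argument is essentially a transcription of the full-line case with the half-line modifications.

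First I would establish that $S_{c,\pm}$ has purely discrete spectrum and that $0\notin\sigma(S_{c,\pm})$. Discreteness follows exactly as in Proposition~\ref{propInverseS}: the real entire solution $\phi_\pm(z,\cdot\,)$ lies in $H^1$ near $\pm\infty$, so the associated half-line singular Weyl--Titchmarsh function is meromorphic with poles contained in $\sigma(T_{c,\pm})\cup\{0\}$, and the resolvent matrix elements of $S_{c,\pm}$ inherit this via \cite[Equation~(4.5)]{LeftDefiniteSL}, forcing the spectrum to be discrete. That zero is excluded follows from the analogue of \cite[Proposition~2.7]{LeftDefiniteSL}; equivalently, $\phi_\pm(0,c)=\E^{\mp c/2}\neq 0$, so $0\notin\sigma(T_{c,\pm})$ already by~\eqref{eqnspecTDc}.

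To match the spectra I would argue that $\lambda$ is an eigenvalue of $S_{c,\pm}$ precisely when $(\tau-\lambda)f=0$ admits a nontrivial solution lying in $H_0^1(I_{c,\pm})$. Such a solution must satisfy the boundary condition at $\pm\infty$, hence be a multiple of $\phi_\pm(\lambda,\cdot\,)$, and must vanish at $c$ because it lies in $H_0^1$. The resulting Dirichlet condition $\phi_\pm(\lambda,c)=0$ is, by~\eqref{eqnspecTDc}, exactly the eigenvalue condition for $T_{c,\pm}$, so $\sigma(S_{c,\pm})=\sigma(T_{c,\pm})$. For the inverse formula I would then fix $g\in H_0^1(I_{c,\pm})$ of compact support and verify directly that the right-hand side of~\eqref{eqnSDinv} solves $\tau f=g$, vanishes at $c$ (a one-line check that the kernel $\E^{-|x-s|/2}-\E^{\pm(c-(x+s)/2)}$ vanishes at $x=c$ for $s\in I_{c,\pm}$), satisfies the boundary condition at $\pm\infty$, and lies in $H_0^1(I_{c,\pm})$. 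This places $f$ in $\dom{S_{c,\pm}}$ with $S_{c,\pm}f=g$, and the general case $g\in H_0^1(I_{c,\pm})$ follows since both sides of~\eqref{eqnSDinv} depend continuously on $g$, the right-hand side being bounded and $S_{c,\pm}^{-1}$ bounded because the spectrum is discrete and bounded away from zero.

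The hard part will be the last step, namely confirming that the candidate function genuinely lies in $H_0^1(I_{c,\pm})$ and realizes both boundary conditions at once, while keeping track of the possible multi-valuedness of $T_{c,+}$, and hence of $S_{c,\pm}$, when $\omega(\{c\})\neq 0$; note, however, that the inverse $S_{c,\pm}^{-1}$ in~\eqref{eqnSDinv} is single-valued and bounded regardless, so the multi-valuedness only enters bookkeeping. As all of these checks merely adapt the whole-line computation already carried out for $S$ and the Green-function computation in the proof of Theorem~\ref{thmPHIrep}, no genuinely new phenomenon is expected to arise.
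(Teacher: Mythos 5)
Your proposal is correct and follows essentially the same route as the paper: discreteness of the spectrum and $0\in\rho(S_{c,\pm})$ via the cited results from \cite{LeftDefiniteSL}, identification of $\sigma(S_{c,\pm})$ with the zero set of $\phi_\pm(\,\cdot\,,c)$ using that $\phi_\pm(\lambda,\cdot\,)$ lies in the domain near $\pm\infty$, and verification of the inverse formula first for compactly supported $g$ and then by continuity. The extra remarks on the vanishing of the kernel at $x=c$ and on the multi-valuedness bookkeeping are details the paper leaves implicit, but they do not change the argument.
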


\begin{proof}
  From~\cite[Lemma~4.5]{LeftDefiniteSL} we infer that $S_{c,\pm}$ has purely discrete spectrum with zero being in the resolvent set in view of~\cite[Proposition~2.7]{LeftDefiniteSL}. 
  Now since for each $\lambda\in\R$ the solution $\phi_\pm(\lambda,\cdot\,)$ lies in the domain of $S_{c,\pm}$ near $\pm\infty$, we infer that $\lambda$ is an eigenvalue of $S_{c,\pm}$ if and only if $\phi_\pm(\lambda,c)=0$. Hence the spectra of $S_{c,\pm}$ and $T_{c,\pm}$ are the same. If $g\in H_0^1(I_{c,\pm})$ has compact support, then $S_{c,\pm}^{-1} g$ is given as in the claim since the function on the right-hand side of~\eqref{eqnSDinv} is a solution of $\tau f = g$ which lies in $H_0^1(I_{c,\pm})$. The general case, when $g\in H_0^1(I_{c,\pm})$ follows from continuity of both sides in~\eqref{eqnSDinv}.
\end{proof}

Finally note that the Fredholm determinants of the inverses of these left-definite operators are the entire functions
\begin{align}\label{eqnFD}
 W(z) = \det(I-zS^{-1}) \quad\text{and}\quad \phi_\pm(c,z) = \E^{\mp\frac{c}{2}} \det(I-zS^{-1}_{c,\pm}), \quad z\in\C,
\end{align}
 as the product representations of these functions show.

\section{Continuity and compactness}\label{secCC}

In this section let $\M$ be the set of all finite signed Borel measures on $\R$, equipped with the weak$^\ast$ topology, that is, the initial topology with respect to the functionals 
 \begin{align*}
  \omega \mapsto \int_\R f d\omega, \quad f\in C_0(\R)
 \end{align*}
 on $\M$, where $C_0(\R)$ is the space of continuous functions which vanish at infinity. 
  Each sequence $\omega_n\in\M$, $n\in\N$ which converges to $\omega$ with respect to this topology is known to be 
 uniformly bounded in total variation and we write $\omega_n\rightharpoonup^\ast\omega$.

\begin{lemma}\label{lemCont}
 If $\omega_n\rightharpoonup^{\ast}\omega$, then the corresponding operators $S_n^{-1}$ converge strongly to $S^{-1}$ and the operators $S_{n,c,\pm}^{-1}$ converge strongly to $S_{c,\pm}^{-1}$ for each $c\in\R$. 
\end{lemma}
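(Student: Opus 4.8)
The plan is to prove strong convergence by using the explicit integral-kernel representations for the inverse operators established in Propositions~3.5 and~5.5. The key difficulty is that all these operators act on \emph{different} Hilbert spaces $L^2(\R;|\omega_n|)$ and $L^2(\R;|\omega|)$, so the very notion of strong convergence must first be made sense of. I would resolve this in the standard way for such measure-dependent problems: I would reformulate everything on a fixed auxiliary space. Concretely, each $S_n^{-1}$ is an integral operator with the \emph{same} continuous kernel
\begin{align*}
 k(x,s) = \E^{-\frac{|x-s|}{2}}, \quad x,\,s\in\R,
\end{align*}
integrated against the varying measure $\omega_n$. The natural device is to compose with the embeddings into a common space; equivalently, one tests the convergence against a total set of nice functions and controls the error using the uniform total-variation bound guaranteed by weak$^\ast$ convergence.

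First I would fix a function $g$ in a convenient dense subset (say continuous functions of compact support, or functions in $C_0(\R)$) and examine the pointwise limit of
\begin{align*}
 S_n^{-1} g(x) = \int_\R \E^{-\frac{|x-s|}{2}} g(s)\, d\omega_n(s).
\end{align*}
For each fixed $x$, the integrand $s\mapsto \E^{-\frac{|x-s|}{2}} g(s)$ lies in $C_0(\R)$, so by the definition of the weak$^\ast$ topology this converges to $\int_\R \E^{-\frac{|x-s|}{2}} g(s)\, d\omega(s) = S^{-1} g(x)$. This gives convergence of the images pointwise in $x$. The same elementary argument applies verbatim to the kernels
\begin{align*}
 \E^{-\frac{|x-s|}{2}} - \E^{\pm\left(c-\frac{x+s}{2}\right)}
\end{align*}
appearing in~\eqref{eqnSDinv}, since these are again continuous and vanish at infinity in the $s$ variable on the half-line $I_{c,\pm}$; this handles the operators $S_{n,c,\pm}^{-1}$ simultaneously.

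The main obstacle is upgrading this pointwise convergence of the images to convergence of the \emph{norms} in the correct (varying) Hilbert space, i.e.\ genuine strong operator convergence. The essential input is the uniform bound $\sup_n \int_\R d|\omega_n| < \infty$, which follows from $\omega_n \rightharpoonup^\ast \omega$ by the uniform boundedness principle as noted just before the lemma. This uniform bound yields, via estimates of the type used in Theorem~3.1 and Proposition~3.5, that the trace norms $\tr\, S_n^{-1}$ stay uniformly bounded, so the family $\{S_n^{-1}\}$ is uniformly bounded in operator norm. I would then combine the pointwise convergence of the kernels against $g$ with a dominated-convergence argument, using $\E^{-\frac{|x-s|}{2}}$ as an integrable majorant, to pass to the limit inside the defining integral for $\langle S_n^{-1} g, h\rangle$ and its self-inner-product. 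Together with the uniform norm bound and density, this delivers strong convergence on the natural domain. I expect the only genuinely delicate point to be the bookkeeping of the three distinct Hilbert-space structures, which is why the cleanest route is to phrase strong convergence through the fixed kernel rather than through the abstract operators.
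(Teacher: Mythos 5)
Your first half is on target and matches the paper: for fixed $g$, the pointwise convergence $S_n^{-1}g(x)\to S^{-1}g(x)$ follows from testing the $C_0(\R)$ integrand against $\omega_n$, and the uniform total-variation bound gives uniform boundedness of the inverses; together these yield weak operator convergence. However, you have misread the setting: the $S_n$ are the \emph{left-definite} operators, so all of the $S_n^{-1}$ act on the single fixed Hilbert space $H^1(\R)$ (with the modified inner product), not on the varying spaces $L^2(\R;|\omega_n|)$ --- those belong to the operators $T_n$. There is therefore no issue of ``making sense of'' strong convergence, but there is the genuine issue of proving $\|S_n^{-1}g - S^{-1}g\|_{H^1(\R)}\to 0$, and this is where your sketch has a gap.

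The $H^1(\R)$-norm involves the derivative of $S_n^{-1}g$, and your proposed dominated-convergence step with majorant $\E^{-\frac{|x-s|}{2}}$ does not apply: the relevant integrals are taken against the \emph{varying} measures $\omega_n$ (against $\omega_n\times\omega_n$ if one expands the self-inner-product), and dominated convergence requires a fixed measure. The device the paper uses, and which is absent from your proposal, is the integration-by-parts identity
\begin{align*}
 \|S_n^{-1}g\|_{H^1(\R)}^2 = \int_\R S_n^{-1}g(x)\, g(x)^\ast\, d\omega_n(x),
\end{align*}
which converts the $H^1$-norm into a single derivative-free integral against $\omega_n$, followed by an Arzel\`a--Ascoli argument: the uniform $H^1$-bound on $S_n^{-1}g$ gives local equicontinuity, so the pointwise convergence upgrades to locally uniform convergence of $S_n^{-1}g$, and for $g$ of compact support this is exactly what allows one to pass to the limit in an integral whose integrand \emph{and} measure both vary. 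Norm convergence plus weak convergence in the Hilbert space $H^1(\R)$ then gives strong convergence. Without this identity (or some substitute, such as weak$^\ast$ convergence of the product measures $\omega_n\times\omega_n$ applied to the reproducing-kernel expansion of the self-inner-product), your argument does not close.
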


\begin{proof}
 Given some arbitrary $g\in H^1(\R)$, we have for each $x\in\R$ 
 \begin{align*}
  S_{n}^{-1}g(x) = \int_\R \E^{-\frac{|x-s|}{2}} g(s)d\omega_n(s) \rightarrow \int_\R \E^{-\frac{|x-s|}{2}} g(s)d\omega(s) = S^{-1}g(x),
 \end{align*}
 as $n\rightarrow\infty$. Since point evaluations are dense and these operators are uniformly bounded, this implies that $S_{n}^{-1}$ converges to $S^{-1}$ in the weak operator topology. 
 In order to prove that they also converge in the strong operator topology, note that an integration by parts shows
 \begin{align*}
  \|S_{n}^{-1}g\|_{H^1(\R)}^2 = \spr{ S_{n}^{-1}g}{g}_{L^2(\R;\omega_n)}, \quad n\in\N.
 \end{align*}
 Moreover, since the functions $S_{n}^{-1}g$ are uniformly bounded in $H^1(\R)$, we infer from the Arzel\`{a}--Ascoli theorem that $S_n^{-1} g$ converges to $S^{-1}g$ locally uniformly.
 Now if we assume that $g$ has compact support, then we have
 \begin{align*}
  & \left| \int_\R S_{n}^{-1}g(x) g(x)^\ast d\omega_n(x) - \int_\R S^{-1}g(x) g(x)^\ast d\omega(x) \right| \\
  & \qquad\qquad\qquad \leq  \|g\|_{L^\infty(\R)} \int_\R d|\omega_n| \sup_{x\in\supp(g)} |S_n^{-1}g(x)-S^{-1}g(x)| \\
  & \qquad\qquad\qquad\quad + \left|\int_\R S^{-1}g(x)g(x)^\ast d\omega_n(x) - \int_\R S^{-1}g(x)g(x)^\ast d\omega(x)\right|,
 \end{align*}
 and thus $S_n^{-1} g$ converges to $S^{-1}g$ in $H^1(\R)$. Finally, since the inverses are uniformly bounded we infer that $S_n^{-1}$ converges to $S^{-1}$ in the strong operator topology.
 The convergence of the operators $S_{n,c,\pm}^{-1}$ may be verified similarly.  
\end{proof}

  In particular, strong convergence implies that each eigenvalue $\lambda\in\sigma(S)$ is the limit of some sequence $\lambda_n\in\sigma(S_n)$, $n\in\N$. 
 However, convergence of the weight measures in the weak$^\ast$ topology does in general not imply convergence of the corresponding spectral measures (in any reasonable topology).  
 Nevertheless, restricted to certain subsets of $\M$, it will be possible to describe the behavior of the spectral measures to some extent. 
 Therefore fix some discrete set $\sigma\subseteq\R$ such that  
 \begin{align}\label{eqnEVsumabl}
   \sum_{\lambda\in\sigma} \frac{1}{|\lambda|} < \infty
 \end{align}
 and consider the set $\M_\sigma$ of all finite signed Borel measures on $\R$ whose associated spectra are contained in $\sigma$. 
  From the strong convergence in Lemma~\ref{lemCont} we infer that $\M_\sigma$ is closed with respect to the weak$^\ast$ topology. 
  As a first step we show how the functions in~\eqref{eqnFD} behave under weak$^\ast$ convergence of weight measures in $\M_\sigma$.

 \begin{lemma}\label{lemFCont}
  Suppose that the measures $\omega_n$, $n\in\N$ lie in $\M_\sigma$ and $\omega_n\rightharpoonup^{\ast}\omega$. Then for each $c\in\R$ there is a subsequence $\omega_{n_k}$ and disjoint sets $\sigma_-$, $\sigma_+\subseteq\sigma\backslash\sigma(S)$ such that the functions $W_{n_k}$  
 and $\phi_{n_k,\pm}(\,\cdot\,,c)$ converge locally uniformly to  
 \begin{align}\label{eqnWPHIinf}
  W(z) \prod_{\lambda\in\sigma_-\cup\sigma_+} \biggl(1-\frac{z}{\lambda}\biggr) \quad\text{and}\quad \phi_\pm(z,c) \prod_{\lambda\in\sigma_\pm} \biggl(1-\frac{z}{\lambda}\biggr), \quad z\in\C,
 \end{align}
 respectively as $k\rightarrow\infty$.
 \end{lemma}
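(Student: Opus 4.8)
The plan is to combine a normal-family (Montel) argument, which produces the subsequence and the candidate limit functions, with a uniform exponential bound on the solutions, which pins down the limits, and then to read off the two product representations from the Wronskian.

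First I would establish compactness. Since every $\omega_n$ lies in $\M_\sigma$, the product representations of Theorem~\ref{thmPHIrep} and Corollary~\ref{corWrep}, together with $\sum_{\lambda\in\sigma}|\lambda|^{-1}<\infty$, give bounds of the form $|W_n(z)|\le\exp(|z|\sum_{\lambda\in\sigma}|\lambda|^{-1})$ and $|\phi_{n,\pm}(z,c)|\le\E^{\mp c/2}\exp(|z|\sum_{\lambda\in\sigma}|\lambda|^{-1})$, locally uniformly in $z$ and uniformly in $n$; the Neumann estimate~\eqref{eqnKneu} yields the same kind of bound for $\phi_{n,\pm}'(\,\cdot\,,c)$. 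Thus $\{W_n\}$, $\{\phi_{n,\pm}(\,\cdot\,,c)\}$ and $\{\phi_{n,\pm}'(\,\cdot\,,c)\}$ are normal families, and after passing to a subsequence $\omega_{n_k}$ they converge locally uniformly to entire functions $W_\infty$, $\Phi_\pm$ and $\Psi_\pm$. Since the Cauchy data $(\phi_{n_k,\pm}(z,c),\phi_{n_k,\pm}'(z,c))$ then converge for every $z$, and $\omega_{n_k}\rightharpoonup^\ast\omega$ has uniformly bounded total variation, continuity of the solution map under weak$^\ast$ convergence (as exploited in Lemma~\ref{lemCont}) shows that $\phi_{n_k,\pm}(z,\cdot)$ converges in $C^1$ on compacta to a solution $\tilde\phi_\pm(z,\cdot)$ of $(\tau-z)u=0$ for $\omega$, with $\tilde\phi_\pm(z,c)=\Phi_\pm(z)$ and $\tilde\phi_\pm'(z,c)=\Psi_\pm(z)$.

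The step I expect to be hardest is identifying these limit solutions, and here the crucial input is again the Neumann bound. Because the total variations $\int d|\omega_n|$ are uniformly bounded by some $V<\infty$, inequality~\eqref{eqnKneu} gives $|m_{n,\pm}(z,x)|\le\E^{|z|V}$ for all $n$ and $x$, hence $|\phi_{n,\pm}(z,x)|\le\E^{|z|V}\E^{\mp x/2}$ uniformly. Passing to the limit yields $|\tilde\phi_\pm(z,x)|\le\E^{|z|V}\E^{\mp x/2}$, so $\tilde\phi_\pm(z,\cdot)$ carries no growing component at $\pm\infty$ and must be a scalar multiple of the recessive solution $\phi_\pm(z,\cdot)$. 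Writing $\tilde\phi_\pm(z,x)=q_\pm(z)\phi_\pm(z,x)$, the ratio $q_\pm$ is independent of $x$, and since $\phi_\pm(z,\cdot)\not\equiv0$ for each $z$ it is an entire function with $q_\pm(0)=1$ (using $\phi_\pm(0,x)=\E^{\mp x/2}$). In particular $\Phi_\pm=q_\pm\,\phi_\pm(\,\cdot\,,c)$ and $\Psi_\pm=q_\pm\,\phi_\pm'(\,\cdot\,,c)$. This subdominance argument is the heart of the proof, and it is where the non-vanishing of the relevant integral kernels at $\pm\infty$ (the reason the spectral data need not converge) must be handled with care.

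It then remains to extract the products. Evaluating the spatially constant Wronskian at $c$ and passing to the limit gives $W_\infty=W[\tilde\phi_+,\tilde\phi_-]=q_+q_-\,W[\phi_+,\phi_-]=q_+(z)q_-(z)W(z)$. On the other hand the zeros of each $W_{n_k}$ are simple and lie in the fixed discrete set $\sigma$, so by Hurwitz's theorem (and $W_\infty(0)=1$) the function $W_\infty$ has only simple zeros, all contained in $\sigma$. Combining this with $W_\infty=q_+q_-W$ and the factorization $W=\prod_{\lambda\in\sigma(S)}(1-z/\lambda)$ from Corollary~\ref{corWrep}, the quotient $q_+q_-=W_\infty/W$ is entire of exponential type zero with simple zeros, all lying in $\sigma\setminus\sigma(S)$. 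Simplicity forces the zero sets of $q_+$ and $q_-$ to be disjoint; denoting them by $\sigma_+$ and $\sigma_-$, Hadamard's theorem gives $q_\pm(z)=\prod_{\mu\in\sigma_\pm}(1-z/\mu)$ with disjoint $\sigma_\pm\subseteq\sigma\setminus\sigma(S)$. Substituting back yields $\Phi_\pm(z)=\phi_\pm(z,c)\prod_{\mu\in\sigma_\pm}(1-z/\mu)$ and $W_\infty(z)=W(z)\prod_{\lambda\in\sigma_+\cup\sigma_-}(1-z/\lambda)$, which are precisely the asserted limits~\eqref{eqnWPHIinf}.
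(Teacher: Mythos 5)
Your overall strategy parallels the paper's --- normal families, identification of the limits of $\phi_{n_k,\pm}(z,\cdot)$ as multiples $q_\pm(z)\phi_\pm(z,\cdot)$ of the recessive solutions, and extraction of the products from $W_\infty=q_+q_-W$ --- but there is a genuine gap in the uniform bound, and it sits exactly where the lemma is delicate. The estimate $|\phi_{n,\pm}(z,c)|\le\E^{\mp c/2}\exp(|z|\sum_{\lambda\in\sigma}|\lambda|^{-1})$ that you read off ``from the product representation'' is unjustified: by Theorem~\ref{thmPHIrep} the zeros of $\phi_{n,\pm}(\,\cdot\,,c)$ are the Dirichlet eigenvalues $\sigma(T_{n,c,\pm})$, and membership of $\omega_n$ in $\M_\sigma$ constrains only the full-line spectrum $\sigma(S_n)$, not these half-line spectra. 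The fallback bound $|\phi_{n,\pm}(z,c)|\le\E^{\mp c/2}\E^{|z|V}$ from \eqref{eqnKneu} is correct and yields normality, but it is of positive exponential type $V$, so it does not force the limits $\Phi_\pm$ to be of exponential type zero. Consequently your final appeal to Hadamard for $q_\pm$ individually is not available: everything you have established (entirety, $q_\pm(0)=1$, disjoint simple real zero sets $\sigma_\pm\subseteq\sigma\backslash\sigma(S)$, and $q_+q_-=W_\infty/W$ of type zero) is equally consistent with $q_\pm(z)=\E^{\pm az}\prod_{\mu\in\sigma_\pm}(1-z/\mu)$ for some $a\neq0$; the exponential factors cancel in $W_\infty=q_+q_-W$ but destroy the asserted product form of the limits of $\phi_{n_k,\pm}(\,\cdot\,,c)$. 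Such factors are not excluded by an $\E^{|z|V}$ majorant: $k_n$ Dirichlet eigenvalues clustering near $-R_n$ with $k_n/R_n\to a$ and $R_n\to\infty$ is compatible with the trace bound $\sum_{\mu\in\sigma(T_{n,c,\pm})}|\mu|^{-1}\le V$ and produces a factor $\E^{az}$ in the limit.

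The paper closes precisely this gap with one further input: the interlacing of the (necessarily simple) zeros and poles of the Herglotz--Nevanlinna function $zG_n(z,c,c)=z\phi_{n,-}(z,c)\phi_{n,+}(z,c)/W_n(z)$. Its poles lie in the fixed discrete set $\sigma$, so interlacing confines the Dirichlet eigenvalues tightly enough to dominate $|\phi_{n,\pm}(z,c)|$, uniformly in $n$, by a fixed multiple of the canonical product $\prod_{\lambda\in\sigma}\bigl(1+|z|/|\lambda|\bigr)$, which is of exponential type zero by \eqref{eqnEVsumabl}. With that majorant the limits are of type zero, the exponential factors are excluded, and your concluding Hadamard step becomes valid. (Your derivation of the partition of $\sigma_\infty\backslash\sigma(S)$ into $\sigma_-$ and $\sigma_+$ via $W_\infty=q_+q_-W$ and Hurwitz is a reasonable alternative to the paper's argument via convergence of the Green functions $G_n(\,\cdot\,,c,c)$, but only after the type-zero bound is in place.)
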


 \begin{proof}
  First of all note that the functions $W_n$ and $\phi_{n,\pm}(\,\cdot\,,c)$, $n\in\N$ are uniformly bounded by a scalar multiple of the product 
   \begin{align*}
      \prod_{\lambda\in\sigma} \biggl(1+\frac{|z|}{|\lambda|}\biggr), \quad z\in\C.
   \end{align*} 
   In fact, this follows from the interlacing property of the (necessarily simple) zeros and poles of the meromorphic Herglotz--Nevanlinna function \cite[Proposition~4.4]{LeftDefiniteSL}
   \begin{align*}
    zG(z,c,c) = \left(\frac{\phi_-'(z,c)}{z\phi_-(z,c)} - \frac{\phi_+'(z,c)}{z\phi_+(z,c)}\right)^{-1}, \quad z\in\C\backslash\R.
   \end{align*}
  Hence there is a subsequence $\omega_{n_k}$ such that the functions $W_{n_k}$ and $\phi_{n_k,\pm}(\,\cdot\,,c)$ converge locally uniformly to some entire functions of exponential type zero. Moreover, since the zeros of the functions $W_{n_k}$ are contained in $\sigma$, their limit is of the form  
  \begin{align*}
   \lim_{k\rightarrow\infty} W_{n_k}(z) = \prod_{\lambda\in\sigma_\infty} \biggl(1-\frac{z}{\lambda}\biggr), \quad z\in\C
  \end{align*}
  for some set $\sigma_\infty\subseteq\sigma$ which contains the spectrum of $S$ in view of Lemma~\ref{lemCont}. 
   Similarly, the functions $\phi_{n_k,\pm}(\,\cdot\,,c)$ converge to some canonical products which vanish in the points of $\sigma(S_{c,\pm})$ respectively, i.e., to functions as given in~\eqref{eqnWPHIinf}.
  But the sets $\sigma_\pm$ of additional zeros of these limits form a partition of $\sigma_\infty\backslash\sigma(S)$. 
 In fact, the convergence in Lemma~\ref{lemCont} implies that the corresponding Green functions $G_n(\,\cdot\,,c,c)$ from \eqref{eqnResolvent} converge to $G(\,\cdot\,,c,c)$ locally uniformly in $\C\backslash\R$ (see \cite[Theorem~3.5]{LeftDefiniteSL}) which proves the claim.  
 \end{proof}

 As a simple consequence we are able to describe the behavior of the norming constants and hence of the spectral measures under weak$^\ast$ convergence in $\M_\sigma$.

 \begin{theorem}\label{thmweakstarconv}
  Suppose that the measures $\omega_n$, $n\in\N$ lie in $\M_\sigma$ and $\omega_n\rightharpoonup^\ast\omega$. Then there is a subsequence $\omega_{n_k}$ such that $\sigma(S_{n_k})$ converges to some $\sigma_\infty\subseteq\sigma$ as $k\rightarrow\infty$ and  
  \begin{align}
   \lim_{k\rightarrow\infty} \lambda\gamma_{n_k,\lambda,\pm}^2 = \begin{cases} 0, &  \lambda\in\sigma_\pm, \\ \lambda \gamma_{\lambda,\pm}^2 \prod_{\kappa\in\sigma_\pm} \left(1-\frac{\lambda}{\kappa}\right)^2, & \lambda\in\sigma(S), \\ \infty, & \lambda\in\sigma_\mp,  \end{cases}
  \end{align}
  for some partition $\sigma_-\dot{\cup}\,\sigma_+$ of $\sigma_\infty\backslash\sigma(S)$. 
 \end{theorem}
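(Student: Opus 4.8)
The plan is to build directly on Lemma~\ref{lemFCont}, which already provides the subsequence $\omega_{n_k}$ and the partition $\sigma_-\dot{\cup}\,\sigma_+$ of $\sigma_\infty\backslash\sigma(S)$, together with the locally uniform convergence of the Fredholm determinants $W_{n_k}$ and $\phi_{n_k,\pm}(\,\cdot\,,c)$ to the products displayed in~\eqref{eqnWPHIinf}. The strategy is to express the norming constants $\gamma_{n_k,\lambda,\pm}^2$ purely in terms of these determinants, so that their limiting behavior can be read off from the limits already established. The natural tool is the explicit three-spectra formula~\eqref{eqngamma}, which writes $\gamma_{\lambda,\pm}^2$ as a residue-type product built from $W$, $\phi_\pm(\,\cdot\,,c)$ and $\phi_\mp(\,\cdot\,,c)$ evaluated via their canonical products over $\sigma(T)$, $\sigma(T_{c,\pm})$ and $\sigma(T_{c,\mp})$.

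First I would fix an arbitrary $c\in\R$ and rewrite~\eqref{eqngamma} for each $\omega_{n_k}$, reading the three product factors off the Hadamard factorizations from Theorem~\ref{thmPHIrep} and Corollary~\ref{corWrep}. Since $W_{n_k}\to W\prod_{\lambda\in\sigma_-\cup\sigma_+}(1-z/\lambda)$ and $\phi_{n_k,\pm}(\,\cdot\,,c)\to\phi_\pm(\,\cdot\,,c)\prod_{\lambda\in\sigma_\pm}(1-z/\lambda)$ locally uniformly, and these are entire functions of exponential type zero with the prescribed zero sets, I can pass to the limit factor by factor. The three cases then fall out according to where the point $\lambda$ sits: if $\lambda\in\sigma(S)$ survives in the limit, the additional poles from $\sigma_-\cup\sigma_+$ in $W$ and the additional zeros from $\sigma_\pm$ in $\phi_\pm$ combine so that the limiting value of $\lambda\gamma_{n_k,\lambda,\pm}^2$ is $\lambda\gamma_{\lambda,\pm}^2\prod_{\kappa\in\sigma_\pm}(1-\lambda/\kappa)^2$, the square arising because the factor $\prod_{\mu\in\sigma(T_{c,\pm})}(1-\lambda/\mu)$ acquires the zeros in $\sigma_\pm$ while the reciprocal factor $\prod_{\mu\in\sigma(T_{c,\mp})}(1-\lambda/\mu)^{-1}$ is unaffected, and the $W$-factor contributes the matching $\prod_{\kappa\in\sigma_-\cup\sigma_+}$ whose restriction to $\sigma_\pm$ again yields $\prod_{\kappa\in\sigma_\pm}(1-\lambda/\kappa)$.

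For the boundary cases I would argue as follows. If $\lambda\in\sigma_\pm$, then in the limit $\lambda$ becomes a zero of the limiting $\phi_\pm(\,\cdot\,,c)$-product, so the factor $\prod_{\mu\in\sigma(T_{n_k,c,\pm})}(1-\lambda/\mu)$ tends to zero (one of its zeros approaches $\lambda$), forcing $\lambda\gamma_{n_k,\lambda,\pm}^2\to 0$; dually, if $\lambda\in\sigma_\mp$ this same zero now appears in the reciprocal factor $\prod_{\mu\in\sigma(T_{n_k,c,\mp})}(1-\lambda/\mu)^{-1}$, which blows up, giving $\lambda\gamma_{n_k,\lambda,\pm}^2\to\infty$. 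The main obstacle I anticipate is handling these degenerating factors rigorously: as $k\to\infty$ a single eigenvalue $\mu_{n_k}\in\sigma(T_{n_k,c,\pm})$ is migrating toward $\lambda$, so I cannot simply substitute the limit into~\eqref{eqngamma} factor by factor near the offending index. The clean way around this is not to evaluate~\eqref{eqngamma} at the limit but to use the residue description $\gamma_{\lambda,\pm}^2 = c_{\lambda,\mp}^{-1}\bigl(-\dot W(\lambda)\bigr)$ from Lemma~\ref{lemWderlam} together with~\eqref{eqnCoup}, expressing $\lambda\gamma_{n_k,\lambda,\pm}^2$ as a ratio of the entire functions $W_{n_k}$ and $\phi_{n_k,\pm}(\,\cdot\,,c)$ whose locally uniform convergence (including convergence of derivatives, by Cauchy's estimates) is already guaranteed. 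Reading off the order of the zero or pole of the limiting ratio at $\lambda$ then yields the three regimes simultaneously and makes the independence of the result from the auxiliary point $c$ transparent.
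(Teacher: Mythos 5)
Your strategy is the paper's own: the proof there consists precisely of Lemma~\ref{lemFCont} combined with the three-spectra formula~\eqref{eqngamma}, which — as you correctly observe — is nothing but the residue identity $\gamma_{\lambda,\pm}^2 = -\dot{W}(\lambda)\,\phi_\pm(\lambda,c)/\phi_\mp(\lambda,c)$ coming from Lemma~\ref{lemWderlam} and~\eqref{eqnCoup}, written out via the Hadamard products of Theorem~\ref{thmPHIrep} and Corollary~\ref{corWrep}. Your case analysis of where the additional zeros of the limiting $W$ and $\phi_\pm(\,\cdot\,,c)$ land reproduces the three regimes. (One bookkeeping slip in your prose: the reciprocal factor $\prod_{\mu\in\sigma(T_{c,\mp})}(1-\lambda/\mu)^{-1}$ is \emph{not} unaffected — it acquires the zeros in $\sigma_\mp$, which then cancel against the $\sigma_\mp$ part of the extra factor $\prod_{\kappa\in\sigma_-\cup\sigma_+}$ in the Wronskian; only after this cancellation does the net contribution reduce to $\prod_{\kappa\in\sigma_\pm}(1-\lambda/\kappa)^2$. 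Your stated limit is right, but the reason you give for it is not.)

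The one step you genuinely skip is the choice of $c$, and it is not harmless. Formula~\eqref{eqngamma} — equivalently the representation $c_{\lambda,\mp}=\phi_\mp(\lambda,c)/\phi_\pm(\lambda,c)$ — is only valid when the three spectra $\sigma(S_n)$, $\sigma(S_{n,c,-})$, $\sigma(S_{n,c,+})$ are disjoint, i.e.\ when $\phi_{n,\pm}(\lambda,c)\neq 0$ for every eigenvalue $\lambda$ of $S_n$; for a bad $c$ your ratio is $0/0$ already at finite $n$. Moreover, to read off the limit at $\lambda\in\sigma(S)$ you also need $\phi_\pm(\lambda,c)\neq 0$ for the \emph{limit} measure, since otherwise the limiting ratio again degenerates to $0/0$ and the asserted value cannot be extracted. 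The paper disposes of this with a countability argument: each of the countably many eigenfunctions $\phi_{n,\pm}(\lambda,\cdot\,)$, $\lambda\in\sigma(S_n)$, $n\in\N$ (and likewise for the limit operator) has only countably many zeros, so a single $c\in\R$ avoiding all of them simultaneously exists. With that one observation added, your argument is the paper's proof.
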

 
  \begin{proof} 
    First of all note that one may choose some $c\in\R$ such that for each $n\in\N$ the three spectra $\sigma(S_n)$, $\sigma(S_{n,c,-})$ and $\sigma(S_{n,c,+})$ are disjoint. 
     In fact, this fails only if $\phi_{n,\pm}(\lambda,c)=0$ for some $\lambda\in\sigma(S_n)$, $n\in\N$ and since each of the (countably many) functions $\phi_{n,\pm}(\lambda,\cdot\,)$, $\lambda\in\sigma(S_n)$, $n\in\N$ has only countably many zeros, it is possible to choose a $c\in\R$ with the claimed property. 
     For the same reason we may as well assume that $\sigma(S)$, $\sigma(S_{c,-})$ and $\sigma(S_{c,+})$ are disjoint and hence the claim immediately follows from Lemma~\ref{lemFCont} and~\eqref{eqngamma}.
 \end{proof}

 Finally, using compactness arguments it is also possible to deduce some kind of continuity for the inverse spectral problem. 
 In fact,  pointwise convergence of the norming constants implies weak$^\ast$ convergence of the corresponding weight measures in $\M_\sigma$, provided they are uniformly bounded.
 
 \begin{corollary}\label{corInvCont}
  Suppose that the measures $\omega_n$, $n\in\N$ lie in $\M_\sigma$, that $\sigma(S_n)$ converges to some $\sigma_\infty\subseteq\sigma$ and that for each $\lambda\in\sigma_\infty$ the quantities  $\lambda \gamma_{n,\lambda,\pm}^2$ converge to some $\lambda \gamma_{\infty,\lambda,\pm}^2\in[0,\infty]$.
  If the measures $\omega_n$ are uniformly bounded, then $\omega_n\rightharpoonup^\ast\omega$ for some $\omega\in\M_\sigma$ with spectrum $\sigma_\infty\backslash(\sigma_-\cup\,\sigma_+)$ and norming constants  
  \begin{align*}
   \gamma_{\infty,\lambda,\pm}^2 \prod_{\kappa\in\sigma_\pm} \biggl(1-\frac{\lambda}{\kappa}\biggr)^{-2}, \quad \lambda\in\sigma_\infty\backslash(\sigma_-\cup\,\sigma_+),
  \end{align*}
  where $\sigma_\pm=\lbrace \lambda\in\sigma_\infty \,|\, \lambda \gamma_{\infty,\lambda,\pm}^2 = 0\rbrace$ and $\sigma_\mp=\lbrace \lambda\in\sigma_\infty \,|\, \lambda \gamma_{\infty,\lambda,\pm}^2 = \infty\rbrace$. 
 \end{corollary}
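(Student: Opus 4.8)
The plan is to leverage the compactness already built into the setting: since the measures $\omega_n$ are uniformly bounded in total variation and all lie in $\M_\sigma$, the set $\M_\sigma$ together with a uniform total-variation bound is weak$^\ast$ sequentially compact. Thus from any sequence we can extract a weak$^\ast$ convergent subsequence $\omega_{n_k}\rightharpoonup^\ast\omega$ with $\omega\in\M_\sigma$, since $\M_\sigma$ is weak$^\ast$ closed by the remark following Lemma~\ref{lemCont}. The whole strategy is then to show that the spectral data of this limit $\omega$ is forced to be the prescribed data, after which the uniqueness Theorem~\ref{thmInvUniq} pins down $\omega$ uniquely, which in turn upgrades subsequential convergence to full convergence.

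First I would apply Theorem~\ref{thmweakstarconv} to the convergent subsequence: passing if necessary to a further subsequence, it yields a partition $\sigma_-\dot\cup\,\sigma_+$ of $\sigma_\infty\backslash\sigma(S)$ (where here $S$ is the operator associated with the limit $\omega$) together with the three-way limit formula for $\lambda\gamma_{n_k,\lambda,\pm}^2$. Comparing this with the hypothesis that $\lambda\gamma_{n,\lambda,\pm}^2\to\lambda\gamma_{\infty,\lambda,\pm}^2$ identifies the relevant sets: the points where the limit vanishes must be exactly $\sigma_\pm=\{\lambda\in\sigma_\infty\,|\,\lambda\gamma_{\infty,\lambda,\pm}^2=0\}$ and those where it blows up must be $\sigma_\mp=\{\lambda\in\sigma_\infty\,|\,\lambda\gamma_{\infty,\lambda,\pm}^2=\infty\}$, precisely matching the statement. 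For $\lambda\in\sigma(S)=\sigma_\infty\backslash(\sigma_-\cup\sigma_+)$, equating the two expressions gives
\begin{align*}
 \lambda\gamma_{\infty,\lambda,\pm}^2 = \lambda\gamma_{\lambda,\pm}^2 \prod_{\kappa\in\sigma_\pm}\biggl(1-\frac{\lambda}{\kappa}\biggr)^2,
\end{align*}
so that the norming constants $\gamma_{\lambda,\pm}^2$ of the limit measure $\omega$ are exactly those announced in the corollary.

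At this point the spectral measure $\rho_\pm$ of $\omega$ is completely determined, so Theorem~\ref{thmInvUniq} shows that $\omega$ is the unique measure with this data. The final step is to promote subsequential weak$^\ast$ convergence to convergence of the full sequence: since $\M_\sigma$ with the uniform bound is weak$^\ast$ compact and \emph{every} weak$^\ast$ convergent subsequence of $(\omega_n)$ must, by the argument above, have the same limit $\omega$, a standard subsequence argument forces $\omega_n\rightharpoonup^\ast\omega$. I expect the main obstacle to be bookkeeping rather than any deep difficulty: one must verify that the partition $\sigma_-\dot\cup\,\sigma_+$ produced abstractly by Theorem~\ref{thmweakstarconv} coincides with the explicitly defined sets $\sigma_\pm,\sigma_\mp$ in the statement, and in particular that these definitions are consistent (a point cannot simultaneously give limit $0$ on one side and $\infty$ on the other in an incompatible way). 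Care is also needed because Theorem~\ref{thmweakstarconv} only asserts existence of a good subsequence, so the identification of the limit data and the invocation of uniqueness must be arranged so that the conclusion holds for the original sequence, not merely along a subsequence.
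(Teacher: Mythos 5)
Your proposal is correct and follows essentially the same route as the paper: extract a weak$^\ast$ convergent subsequence using the uniform bound and closedness of $\M_\sigma$, identify the limit's spectrum and norming constants via Theorem~\ref{thmweakstarconv} together with the hypothesis on the convergence of $\lambda\gamma_{n,\lambda,\pm}^2$, and then invoke the inverse uniqueness result Theorem~\ref{thmInvUniq} to conclude that every subsequential limit coincides, so the whole sequence converges. The only difference is that you spell out the bookkeeping (matching the abstract partition from Theorem~\ref{thmweakstarconv} with the explicitly defined sets $\sigma_\pm$) that the paper leaves implicit.
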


 \begin{proof}
  By compactness of $\M_\sigma$, each subsequence of $\omega_n$ has a weak$^\ast$ convergent subsequence. From Theorem~\ref{thmweakstarconv} we infer that the spectrum and the norming constants corresponding to the limit of this subsequence are given as in the claim. 
 Now the inverse uniqueness result Theorem~\ref{thmInvUniq} shows that all such subsequences actually converge to the same $\omega\in\M_\sigma$ and hence $\omega_n\rightharpoonup^\ast\omega$.
 \end{proof}

 Note that $\M_\sigma$ is bounded (and hence compact) if and only if $\sigma$ is positive or negative (by this we mean contained in $\R^+$ or $\R^-$). 
  In fact, if $\sigma$ is positive or negative, then \eqref{eqnTF} shows that $\M_\sigma$ is bounded. 
  For the converse, first observe that the Wronskian corresponding to some weight measure of the form 
   $\omega = \omega_+ \delta_{\varepsilon} - \omega_- \delta_{-\varepsilon}$ with $\omega_+$, $\omega_-\in\R$ and $\varepsilon>0$ is given by
  \begin{align}\label{eqnTwoPeakonWronski}
   W(z) = 1 - z(\omega_+ - \omega_-) - z^2 \omega_+ \omega_- (1-\E^{-2\varepsilon}), \quad z\in\C.
  \end{align}
  Now suppose there is some positive $\lambda_+\in\sigma$ and some negative $\lambda_-\in\sigma$. 
  Then \eqref{eqnTwoPeakonWronski} shows that for each large enough given $\omega_+$ one may choose $\omega_-\in\R$, $\varepsilon>0$ such that the eigenvalues corresponding to the weight measure $\omega$ are $\lambda_+$ and $\lambda_-$. 
 But this guarantees that in this case $\M_\sigma$ is unbounded indeed.

\section{Spectral transformation}\label{secIP}

Since $S$ is a self-adjoint operator in $H^1(\R)$, it is clear that the eigenfunctions $\phi_\pm(\lambda,\cdot\,)$, $\lambda\in\sigma(S)$ are orthogonal in $H^1(\R)$ and that their span is dense in $\dom{S}$. 
 Moreover, because $T$ is self-adjoint in the Krein space $\Krom$, they are also orthogonal with respect to the indefinite inner product there. 
 The considerations in the previous section also show that their span is even dense in $\Krom$. 
 More precisely, from~\eqref{eqnLDNormRel} and strict positivity of $JT$ we get 
\begin{align*}
 \eps \|f\|_{\Lrom}^2 \leq \spr{JTf}{f}_{\Lrom} = \|f\|_{H^1(\R)}^2, \quad f\in\dom{T}
\end{align*}
for some $\eps>0$, which shows that the span of all functions $\phi_\pm(\lambda,\cdot\,)$, $\lambda\in\sigma(T)$ is dense in $\dom{T}$ and hence also in $\Krom$.
  
  Next, for each function $f\in \Krom$ we define the transform $\mathcal{F}_\pm f$ on $\sigma(T)$ by 
 \begin{align}
  \mathcal{F}_\pm f(\lambda) = \int_\R \phi_\pm(\lambda,x) f(x)d\omega(x), \quad \lambda\in\sigma(T). 
 \end{align}
 In order to state the following result, recall that some collection of functions in $\Lrom$ forms a Riesz basis if they are an orthonormal basis with respect to some inner product which is equivalent to the usual inner product in $\Lrom$.
 
  \begin{proposition}\label{propFtrans}
  There exists a Riesz basis in $\Lrom$ consisting of eigenfunctions if and only if $\mathcal{F}_\pm$ maps $\Krom$ bicontinuously onto $L^2(\R;\rho_\pm)$ with
  \begin{align}\label{eqnKiso}
   \spr{\mathcal{F}_\pm f}{\mathcal{F}_\pm g}_{L^2(\R;\rho_\pm)} = \spr{f}{g}_{\Krom}, \quad f,\, g\in \Krom.
  \end{align}
  In this case,  $\mathcal{F}_\pm$ maps $T$ onto multiplication with the independent variable.
 \end{proposition}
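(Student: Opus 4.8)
The plan is to prove the equivalence by establishing the isometry formula~\eqref{eqnKiso} directly on the dense span of eigenfunctions and then using bicontinuity of $\mathcal{F}_\pm$ to decide when this span is a Riesz basis. First I would compute $\mathcal{F}_\pm$ on the eigenfunctions themselves. For $\lambda\in\sigma(T)$, the defining integral gives $\mathcal{F}_\pm\phi_\pm(\lambda,\cdot\,)(\kappa) = \int_\R \phi_\pm(\kappa,x)\phi_\pm(\lambda,x)\,d\omega(x)$, and using orthogonality of the eigenfunctions in the Krein space $\Krom$ together with the norming-constant definition~\eqref{eqnNorm}, this vanishes for $\kappa\neq\lambda$ and equals $\gamma_{\lambda,\pm}^2$ when $\kappa=\lambda$. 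Since the spectral measure is $\rho_\pm=\sum_\lambda \gamma_{\lambda,\pm}^{-2}\delta_\lambda$, one sees that $\mathcal{F}_\pm\phi_\pm(\lambda,\cdot\,)$ is precisely $\gamma_{\lambda,\pm}^2$ times the indicator of $\{\lambda\}$, and a direct check against~\eqref{eqnKiso} shows the isometry relation holds on pairs of eigenfunctions. By bilinearity it then holds on their span.

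Next I would handle the two implications. For the direction assuming a Riesz basis of eigenfunctions: a Riesz basis means the eigenfunctions are orthonormal with respect to some inner product equivalent to the $\Lrom$ inner product, so the span is dense and the partial-sum operators are uniformly bounded. This lets me extend the isometry~\eqref{eqnKiso}, already verified on the dense span, to all of $\Krom$ by continuity, and the Riesz-basis property guarantees that every element of $L^2(\R;\rho_\pm)$ is the image of some $f\in\Krom$, giving surjectivity. The bicontinuity is exactly the statement that the basis expansion and its inverse are bounded, which is built into the definition of a Riesz basis. Conversely, if $\mathcal{F}_\pm$ is a bicontinuous bijection onto $L^2(\R;\rho_\pm)$ satisfying~\eqref{eqnKiso}, then the preimages of the normalized point masses $\gamma_{\lambda,\pm}\,\indik_{\{\lambda\}}$ form an orthonormal basis of $\Krom$ in an equivalent inner product; since these preimages are scalar multiples of the eigenfunctions $\phi_\pm(\lambda,\cdot\,)$, the eigenfunctions form a Riesz basis in $\Lrom$.

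The final assertion, that $\mathcal{F}_\pm$ diagonalizes $T$, follows once the isometry is in place: applying $T$ to an eigenfunction multiplies it by $\lambda$, so on the dense span $\mathcal{F}_\pm(Tf)(\lambda)=\lambda\,\mathcal{F}_\pm f(\lambda)$, and this intertwining extends by continuity. I expect the main obstacle to lie in the Krein-space subtleties rather than in the algebra: the inner product $\spr{\cdot}{\cdot}_{\Krom}$ is indefinite, so the word "orthonormal basis" must be interpreted via the associated Hilbert-space structure, and I must be careful that the equivalence of inner products underlying the Riesz-basis notion is compatible with the fundamental symmetry $J$ and the strict positivity estimate $\eps\|f\|_{\Lrom}^2\leq\|f\|_{H^1(\R)}^2$ recorded just before the proposition. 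Pinning down exactly which equivalent inner product realizes the orthonormality, and checking that bicontinuity of $\mathcal{F}_\pm$ matches the uniform frame bounds of the Riesz basis, is where the care is needed; the density of the eigenfunction span in $\Krom$, already established above, is what makes the continuity extensions legitimate.
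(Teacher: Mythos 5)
Your first paragraph (the computation of $\mathcal{F}_\pm$ on the eigenfunctions and the verification of \eqref{eqnKiso} on their span), your converse direction, and your final diagonalization argument all match the paper and are fine. The problem is the forward direction, at the step ``extend the isometry, already verified on the dense span, to all of $\Krom$ by continuity.'' The identity \eqref{eqnKiso} is an equality of \emph{indefinite} forms --- both $\rho_\pm$ and $\spr{\cdot}{\cdot}_{\Krom}$ change sign with the eigenvalues --- so knowing it on a dense subspace gives no control of $\|\mathcal{F}_\pm f\|_{L^2(\R;|\rho_\pm|)}$ by $\|f\|_{\Lrom}$. Before any continuity extension you must prove that $\mathcal{F}_\pm$ is bounded from $\Lrom$ into $L^2(\R;|\rho_\pm|)$, and that is the real content of this implication, not something ``built into the definition of a Riesz basis'': the hypothesis only says that $c_\lambda\phi_\pm(\lambda,\cdot)$ is orthonormal for an equivalent inner product for \emph{some} unspecified normalization $c_\lambda$, whereas boundedness and surjectivity of $\mathcal{F}_\pm$ onto $L^2(\R;\rho_\pm)$ amount to the \emph{specific} normalization $|\gamma_{\lambda,\pm}^2|^{-1/2}\phi_\pm(\lambda,\cdot)$ being a Riesz basis, i.e.\ to $|c_\lambda|^2\,|\gamma_{\lambda,\pm}^2|\asymp 1$. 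You explicitly flag this as ``where the care is needed,'' but you never resolve it, and without it both the extension by continuity and your surjectivity claim are unsupported.

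The missing fact is true, but it requires the Krein-space input that the paper supplies: the closures of the spans of eigenfunctions with positive, respectively negative, eigenvalues are Krein-orthogonal, definite, and (by the Riesz-basis hypothesis) their direct sum is closed, hence they constitute a fundamental decomposition \eqref{eqnDirectSum} of $\Krom$; on each component the Krein inner product is an equivalent Hilbert inner product in which the eigenfunctions are a complete orthogonal set, so $\mathcal{F}_\pm$ maps the two components unitarily onto $L^2(\R^+;\rho_\pm)$ and $L^2(\R^-;\rho_\pm)$, giving bijectivity together with \eqref{eqnKiso}; bicontinuity then follows from a closed-graph argument using pointwise evaluation at eigenvalues. (Alternatively, one can argue directly: if $G$ is the Gram operator of the equivalent inner product making $c_\lambda\phi_\pm(\lambda,\cdot)$ orthonormal, then $G^{-1}J$ is diagonal in this basis with eigenvalues $|c_\lambda|^2\gamma_{\lambda,\pm}^2$ and is boundedly invertible, which forces $|c_\lambda|^2|\gamma_{\lambda,\pm}^2|\asymp 1$.) Either way, some such argument must be inserted where you currently appeal to ``uniform frame bounds.''
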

 
 \begin{proof}
  If there exists a Riesz basis consisting of eigenfunctions, then the direct sum  
   \begin{align}\label{eqnDirectSum}
    \overline{\linspan\lbrace\phi_\pm(\lambda,\cdot\,) \,|\, \lambda\in\sigma(T)\cap\R^+ \rbrace} \,\dot{+}\, \overline{\linspan\lbrace\phi_\pm(\lambda,\cdot\,) \,|\, \lambda\in\sigma(T)\cap\R^- \rbrace}
  \end{align}
  is orthogonal with respect to some (equivalent) inner product in $\Lrom$ and hence closed. 
  In particular, this sum is a fundamental decomposition of $\Krom$ and the corresponding projections are denoted by $P_{\pm}^+$ and $P_{\pm}^-$. 
  Then we have
  \begin{align*}
   \mathcal{F}_\pm f(\lambda) = \begin{cases} \mathcal{F}_\pm P_{\pm}^+ f(\lambda), & \lambda\in\sigma(T)\cap\R^+, \\                                                 \mathcal{F}_\pm P_{\pm}^- f(\lambda), &  \lambda\in\sigma(T)\cap\R^-, \end{cases}
   \end{align*}
 since the sum~\eqref{eqnDirectSum} is orthogonal with respect to the indefinite inner product in $\Krom$ as well. 
  Moreover, the functions $\phi_\pm(\lambda,\cdot\,)$, $\lambda\in\sigma(T)\cap\R^+$ are a complete orthogonal set of the positive definite subspace $\ran{P_{\pm}^+}\subseteq\Krom$ and $\phi_\pm(\lambda,\cdot\,)$, $\lambda\in\sigma(T)\cap\R^-$ are a complete orthogonal set of the negative definite subspace $\ran{P_{\pm}^-}\subseteq\Krom$. 
 Thus it is clear that $\mathcal{F}_\pm$ maps $\ran{P_{\pm}^+}$ unitarily onto $L^2(\R^+;\rho_\pm)$ and $\ran{P_{\pm}^-}$ unitarily onto $L^2(\R^-;\rho_\pm)$. 
  As a consequence, we infer that $\mathcal{F}_\pm$ is a bijection from $\Krom$ onto $L^2(\R;\rho_\pm)$ satisfying~\eqref{eqnKiso}. 
  Furthermore, if $f_n\rightarrow f$ in $\Krom$ and $\mathcal{F}_\pm f_n \rightarrow F$ in $L^2(\R;\rho_\pm)$ as $n\rightarrow\infty$, then 
 \begin{align*}
  \mathcal{F}_\pm f(\lambda) & = \spr{f}{\phi_\pm(\lambda,\cdot\,)}_{\Krom} = \lim_{n\rightarrow\infty} \spr{f_n}{\phi_\pm(\lambda,\cdot\,)}_{\Krom} = \lim_{n\rightarrow\infty} \mathcal{F}_\pm f_n(\lambda) 
    = F(\lambda)
 \end{align*}
 for each $\lambda\in\sigma(T)$, 
 which proves that $\mathcal{F}_\pm$ is bicontinuous. 
 For the converse, note that the eigenfunctions $\phi_\pm(\lambda,\cdot\,)$, $\lambda\in\sigma(T)$ are orthogonal with respect to the (positive definite) inner product given by 
 \begin{align*}
  (f,g) \mapsto \spr{\mathcal{F}_\pm f}{\mathcal{F}_\pm g}_{L^2(\R;|\rho_\pm|)}, \quad (f,g)\in\Lrom\times\Lrom,
 \end{align*}
 which is equivalent to the usual inner product in $\Lrom$ by assumption. 

   In order to prove the last claim we denote with $\mathrm{M}_{\mathrm{id}}$ the maximally defined operator of multiplication with the independent variable in $\Krom$.
    Then for each $\kappa\in\sigma(T)$ we have
  \begin{align*}
   \mathcal{F}_\pm T^{-1} \phi_\pm(\kappa,\cdot\,)(\lambda) = \kappa^{-1} \mathcal{F}_\pm \phi_\pm(\kappa,\cdot\,)(\lambda) = \mathrm{M}_{\mathrm{id}}^{-1} \mathcal{F}_\pm \phi_\pm(\kappa,\cdot\,)(\lambda), \quad \lambda\in\sigma(T)
  \end{align*}
  and hence $T^{-1}=\mathcal{F}_\pm^{-1} \mathrm{M}_{\mathrm{id}}^{-1}\mathcal{F}_\pm $ on a dense subspace. Now the claim follows since both sides are continuous linear operators on $\Krom$.
 \end{proof}
 
  It is a quite delicate question whether or not there is a Riesz basis consisting of eigenfunctions and we only refer to \cite{bifl} and the references cited there. 
  Also note that if there are only finitely many positive or negative eigenvalues, then there is always such a Riesz basis since then the direct sum in~\eqref{eqnDirectSum} is closed for sure.
  Next we will compute the transforms of some particular functions.
 
 \begin{lemma}\label{lemTransGreen}
  For each $z\in\rho(T)$ and $x\in\R$ the transform of the Green function $G(z,x,\cdot\,)$ is given by
  \begin{align}
   \mathcal{F}_\pm G(z,x,\cdot\,)(\lambda) = \frac{\phi_\pm(\lambda,x)}{\lambda-z}, \quad \lambda\in\sigma(T).
  \end{align}
 \end{lemma}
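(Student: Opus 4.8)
The plan is to reduce the statement directly to the resolvent formula~\eqref{eqnResolvent} together with the eigenvalue equation, with essentially no computation. First I would record that for $\lambda\in\sigma(T)$ the function $\phi_\pm(\lambda,\cdot\,)$ is precisely the eigenfunction of $T$ at $\lambda$, so in particular $\phi_\pm(\lambda,\cdot\,)\in\Lrom$; this is guaranteed by the spatial asymptotics in Theorem~\ref{thmSpecResPHI} together with finiteness of $|\omega|$, and it is what makes $\mathcal{F}_\pm$ applicable. For the same reason, for fixed $x\in\R$ and $z\in\rho(T)$ the Green function $G(z,x,\cdot\,)$ decays like $\E^{\mp s/2}$ as $s\to\pm\infty$ and hence also lies in $\Lrom$, so that $\mathcal{F}_\pm G(z,x,\cdot\,)(\lambda)$ is well-defined.

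The key step is to recognize the defining integral of $\mathcal{F}_\pm$ as the resolvent applied to an eigenfunction. Writing the integration variable as $s$, the definition of $\mathcal{F}_\pm$ gives
\begin{align*}
 \mathcal{F}_\pm G(z,x,\cdot\,)(\lambda) = \int_\R \phi_\pm(\lambda,s)\, G(z,x,s)\, d\omega(s) = \int_\R G(z,x,s)\, \phi_\pm(\lambda,s)\, d\omega(s),
\end{align*}
and by~\eqref{eqnResolvent} the right-hand side is exactly $(T-z)^{-1}\phi_\pm(\lambda,\cdot\,)$ evaluated at $x$. (The symmetry $G(z,x,s)=G(z,s,x)$, visible from the piecewise definition of $G$, may be noted but is not actually needed, since the two integrands above differ only by commutativity of scalar multiplication.)

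Finally, since $\phi_\pm(\lambda,\cdot\,)$ is an eigenfunction of $T$ with eigenvalue $\lambda$, we have $(T-z)\phi_\pm(\lambda,\cdot\,)=(\lambda-z)\phi_\pm(\lambda,\cdot\,)$, and because $z\in\rho(T)$ forces $\lambda\neq z$ this may be inverted to yield $(T-z)^{-1}\phi_\pm(\lambda,\cdot\,)=(\lambda-z)^{-1}\phi_\pm(\lambda,\cdot\,)$. Evaluating at $x$ then gives the claimed identity $\mathcal{F}_\pm G(z,x,\cdot\,)(\lambda)=\phi_\pm(\lambda,x)/(\lambda-z)$.

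The argument is short, and the only genuinely careful point---which I regard as the main (and rather minor) obstacle---is the integrability bookkeeping: one must ensure that $G(z,x,\cdot\,)\in\Lrom$ so that $\mathcal{F}_\pm$ applies, and that the integral representation~\eqref{eqnResolvent} of the resolvent may legitimately be invoked with the eigenfunction $\phi_\pm(\lambda,\cdot\,)$ as its input. Both points follow from the exponential decay of the solutions $\phi_\pm$ established in Theorem~\ref{thmSpecResPHI} and the finiteness of the total variation of $\omega$.
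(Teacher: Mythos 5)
Your argument is correct and is essentially identical to the paper's own proof: both identify $\int_\R G(z,x,s)\phi_\pm(\lambda,s)\,d\omega(s)$ with $(T-z)^{-1}\phi_\pm(\lambda,\cdot\,)(x)$ via~\eqref{eqnResolvent} and then use that $\phi_\pm(\lambda,\cdot\,)$ is an eigenfunction. The extra integrability remarks you add are fine but not part of the paper's (one-line) proof.
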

 
 \begin{proof}
  From~\eqref{eqnResolvent} we have   
  \begin{align*}
   \mathcal{F}_\pm G(z,x,\cdot\,)(\lambda) & = \int_\R G(z,x,s) \phi_\pm(\lambda,s) d\omega(s) = (T-z)^{-1} \phi_\pm(\lambda,\cdot\,)(x)  =  \frac{\phi_\pm(\lambda,x)}{\lambda-z}
  \end{align*}
  for every eigenvalue $\lambda\in\sigma(T)$.
 \end{proof}

 If $\tau$ is in the limit-circle case near $\pm\infty$, that is, when
\begin{align*}
 \int_\R \E^{\pm x}d|\omega|(x) < \infty,
\end{align*}
 then we are also able to determine the transforms of the Weyl solutions.

\begin{lemma}\label{lemTransWeyl}
 If $\tau$ is in the limit-circle case near $\pm\infty$, then for each $z\in\rho(T)$ 
 \begin{align}
  \mathcal{F}_\pm \phi_\mp(z,\cdot\,)(\lambda) = \frac{W(z)}{\lambda-z}, \quad \lambda\in\sigma(T).
 \end{align}
\end{lemma}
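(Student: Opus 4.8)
The plan is to reduce the claim to a Green's (Lagrange) identity and then read off the two boundary terms from the asymptotics established in Theorem~\ref{thmSpecResPHI}. First I would record that the limit-circle hypothesis near $\pm\infty$ is exactly what makes the assertion meaningful: by the spatial asymptotics~\eqref{eqnSpatAsy} the solution $\phi_\mp(z,\cdot\,)$ is square integrable with respect to $|\om|$ near $\mp\infty$ in any case, while near $\pm\infty$ it grows at most like $\E^{\pm x/2}$, so that $\int_\R |\phi_\mp(z,x)|^2 d|\om|(x)<\infty$ precisely when $\int_\R \E^{\pm x}d|\om|(x)<\infty$. Hence $\phi_\mp(z,\cdot\,)\in\Krom$ and $\mathcal{F}_\pm \phi_\mp(z,\cdot\,)(\lambda)=\int_\R \phi_\pm(\lambda,x)\phi_\mp(z,x)\,d\om(x)$.

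The key step is the identity obtained by antisymmetrizing the integration-by-parts formula~\eqref{eqnintpar}. Applying~\eqref{eqnintpar} to the pair $f=\phi_\pm(\lambda,\cdot\,)$, $g=\phi_\mp(z,\cdot\,)$ and to the same pair with the roles of $f$ and $g$ interchanged, and inserting $\tau\phi_\pm(\lambda,\cdot\,)=\lambda\phi_\pm(\lambda,\cdot\,)$ and $\tau\phi_\mp(z,\cdot\,)=z\phi_\mp(z,\cdot\,)$, the volume integrals cancel and one is left with
\[
 (\lambda-z)\int_\alpha^\beta \phi_\pm(\lambda,x)\phi_\mp(z,x)\,d\om(x) = W_\beta - W_\alpha,
\]
where $W_x=\phi_\pm(\lambda,x)\phi_\mp'(z,x)-\phi_\pm'(\lambda,x)\phi_\mp(z,x)$ is the cross-Wronskian. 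It then remains to let $\alpha\to-\infty$ and $\beta\to+\infty$ and evaluate the two boundary terms.

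For concreteness I would treat the $+$ transform. At the inner endpoint $-\infty$, the fact that $\lambda$ is an eigenvalue gives $\phi_+(\lambda,\cdot\,)=c_{\lambda,+}\phi_-(\lambda,\cdot\,)$ by~\eqref{eqnCoup}, so both factors in $W_x$ decay like $\E^{x/2}$; writing $\phi_\pm=\E^{\mp x/2}m_\pm$ and using $m_\pm\to1$, $m_\pm'\to0$ from Theorem~\ref{thmSpecResPHI}, the leading terms cancel and $W_x=O(\E^{x})\to0$. At the outer endpoint $+\infty$ I would compare $W_\beta$ with the genuine Wronskian $W(z)=\phi_+(z,\beta)\phi_-'(z,\beta)-\phi_+'(z,\beta)\phi_-(z,\beta)$, which is independent of $\beta$. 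Their difference is built from $\phi_+(\lambda,\beta)-\phi_+(z,\beta)=\E^{-\beta/2}(m_+(\lambda,\beta)-m_+(z,\beta))=\E^{-\beta/2}\oo(1)$ (and the analogous expression for the derivatives), multiplied by $\phi_-(z,\beta),\phi_-'(z,\beta)=O(\E^{\beta/2})$, so that $W_\beta-W(z)=\oo(1)$. Hence $W_\beta\to W(z)$ and $W_\alpha\to0$, giving $(\lambda-z)\,\mathcal{F}_+\phi_-(z,\cdot\,)(\lambda)=W(z)$; the $-$ case follows by the symmetric computation with the roles of the endpoints exchanged.

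I expect the outer boundary term to be the only delicate point. There $\phi_\mp(z,\cdot\,)$ is the growing solution (of size $\E^{\pm x/2}$, being a combination of the two fundamental solutions), so one cannot simply substitute its asymptotics; the cancellation that produces the finite limit $W(z)$ hinges on matching $\phi_\pm(\lambda,\cdot\,)$ against $\phi_\pm(z,\cdot\,)$ to order $\oo(\E^{\mp x/2})$, which in turn rests on the convergence $m_\pm(\cdot\,,x)\to1$ being uniform in $x$ as established in the proof of Theorem~\ref{thmSpecResPHI}. (Alternatively, one could extract $\phi_\mp(z,\cdot\,)$ as a suitably normalized limit of $G(z,x,\cdot\,)$ and invoke Lemma~\ref{lemTransGreen}, but that route requires justifying an interchange of limit and integral and seems no shorter.)
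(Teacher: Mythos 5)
Your proposal is correct and follows essentially the same route as the paper: the paper's proof also applies the Lagrange identity to $\phi_\pm(\lambda,\cdot\,)$ and $\phi_\mp(z,\cdot\,)$ over all of $\R$, notes that the boundary term at $\mp\infty$ vanishes, and identifies the limit at $\pm\infty$ with $W(z)$ because all solutions $\phi_\pm(z,\cdot\,)$ share the same asymptotics~\eqref{eqnSpatAsy} there. Your write-up merely makes explicit the estimate ($\phi_\pm(\lambda,\beta)-\phi_\pm(z,\beta)=\E^{\mp\beta/2}\oo(1)$ against $\phi_\mp(z,\beta)=\OO(\E^{\pm\beta/2})$) that the paper leaves implicit in its one-line justification of the outer boundary term.
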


\begin{proof}
 From the Lagrange identity we have for every eigenvalue $\lambda\in\sigma(T)$ 
 \begin{align*}
  (\lambda-z)\int_\R \phi_\pm(\lambda,s) \phi_\mp(z,s)d\omega(s) & = \pm \lim_{s\rightarrow\pm\infty} \phi_\pm(\lambda,s) \phi_\mp'(z,s) - \phi_\pm'(\lambda,s) \phi_\mp(z,s).
 \end{align*}
 Now the claim follows since all functions $\phi_\pm(z,\cdot\,)$, $z\in\C$ have the same asymptotic behavior near $\pm\infty$.
\end{proof}

In the remaining part of this section we will solve the inverse problem for the class of weight measures in $\M$ which are sign definite. 
 The unique solvability in this case resembles the fact that solutions of the Camassa--Holm equation corresponding to measures which are of one sign exist for all times. In the indefinite case the inverse problem seems to be much more complicated and will not be discussed here. 
   
\begin{theorem}\label{thmInvExis}
 Let $\sigma\subseteq\R$ be a positive or negative discrete set satisfying \eqref{eqnEVsumabl} and for each $\lambda\in\sigma$ let $\gamma_{\lambda,\pm}^2\in\R$ such that $\lambda\gamma_{\lambda,\pm}^2>0$.
 Then the discrete measure  
   \begin{align}\label{eqnInvEx}
    \sum_{\lambda\in\sigma} \gamma_{\lambda,\pm}^{-2} \delta_\lambda
   \end{align}
  is the spectral measure of some unique (positive or negative) measure $\omega\in\M_\sigma$.
\end{theorem}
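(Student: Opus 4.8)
The plan is to treat uniqueness and existence separately. Uniqueness is immediate: any measure realizing the prescribed data has spectral measure $\rho_\pm = \sum_{\lambda\in\sigma}\gamma_{\lambda,\pm}^{-2}\delta_\lambda$, so Theorem~\ref{thmInvUniq} shows there is at most one such $\omega$. The bulk of the work is therefore existence, which I would obtain by a truncation-and-compactness argument resting on the continuity theory of Section~\ref{secCC}.

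For existence I would first dispose of the finite-dimensional problem. Assume without loss of generality that $\sigma\subseteq\R^+$ (the negative case being symmetric and producing a negative $\omega$), and enumerate $\sigma=\lbrace\lambda_1,\lambda_2,\dots\rbrace$, which is possible since \eqref{eqnEVsumabl} forces $\sigma$ to be discrete with $\lambda_k\to+\infty$. For each $n$ I would invoke the classical solution of the inverse problem for a finite sum of Dirac measures via the Stieltjes moment problem (\cite{kakr}, \cite{bss}): since the spectral data $\lbrace(\lambda_k,\gamma_{\lambda_k,\pm}^{-2})\rbrace_{k=1}^n$ are positive, this produces a positive measure $\omega_n$ supported on exactly $n$ points whose isospectral problem has precisely the eigenvalues $\lambda_1,\dots,\lambda_n$ and the prescribed norming constants $\gamma_{\lambda_k,\pm}^2$, $k=1,\dots,n$ (recall that an $n$-point measure yields an $n$-dimensional space $\Lrom$ and hence exactly $n$ eigenvalues). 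In particular $\sigma(S_n)=\lbrace\lambda_1,\dots,\lambda_n\rbrace\subseteq\sigma$, so $\omega_n\in\M_\sigma$.

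Next I would pass to the limit. Since $\sigma$ is sign definite, $\M_\sigma$ is bounded and hence weak$^\ast$ compact, so the sequence $(\omega_n)$ is in particular uniformly bounded. To apply Corollary~\ref{corInvCont} I must verify its hypotheses. The spectra $\sigma(S_n)=\lbrace\lambda_1,\dots,\lambda_n\rbrace$ increase to $\sigma$, whence $\sigma_\infty=\sigma$; this is also reflected in the Wronskians $W_n(z)=\prod_{k=1}^n(1-z/\lambda_k)$, which by \eqref{eqnEVsumabl} converge locally uniformly to $\prod_{k=1}^\infty(1-z/\lambda_k)$ with zero set exactly $\sigma$. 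For each fixed $\lambda_k$ the norming constant stabilizes, since once $n\ge k$ one has $\gamma_{n,\lambda_k,\pm}^2=\gamma_{\lambda_k,\pm}^2$; hence $\lambda_k\gamma_{n,\lambda_k,\pm}^2\to\lambda_k\gamma_{\lambda_k,\pm}^2\in(0,\infty)$. Consequently the exceptional sets $\sigma_\pm=\lbrace\lambda:\lambda\gamma_{\infty,\lambda,\pm}^2=0\rbrace$ and $\sigma_\mp=\lbrace\lambda:\lambda\gamma_{\infty,\lambda,\pm}^2=\infty\rbrace$ of Corollary~\ref{corInvCont} are both empty.

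Corollary~\ref{corInvCont} then yields $\omega_n\rightharpoonup^{\ast}\omega$ for some $\omega\in\M_\sigma$ whose spectrum is $\sigma_\infty\setminus(\sigma_-\cup\sigma_+)=\sigma$ and whose norming constants are $\gamma_{\infty,\lambda,\pm}^2\prod_{\kappa\in\sigma_\pm}(1-\lambda/\kappa)^{-2}=\gamma_{\lambda,\pm}^2$, the product being empty; that is, the spectral measure of $\omega$ is precisely $\sum_{\lambda\in\sigma}\gamma_{\lambda,\pm}^{-2}\delta_\lambda$. Finally, $\omega$ is positive because it is a weak$^\ast$ limit of the positive measures $\omega_n$ (testing against nonnegative $f\in C_0(\R)$). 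I expect the only genuinely nontrivial input to be the finite-dimensional step, which must be imported from Stieltjes/Kac--Krein theory; everything else is an assembly of the compactness of $\M_\sigma$ (valid \emph{precisely} because $\sigma$ is sign definite) with Corollary~\ref{corInvCont}, together with the routine verification that in the limit no eigenvalues escape and no norming constants degenerate.
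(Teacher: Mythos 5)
Your proposal is correct and follows essentially the same route as the paper: solve the truncated finite (multi-peakon) inverse problem via the Stieltjes moment theory of \cite{bss}/\cite{kakr}, use boundedness and hence weak$^\ast$ compactness of $\M_\sigma$ for sign-definite $\sigma$, and identify the limit's spectral data via the continuity results of Section~\ref{secCC} (you route this through Corollary~\ref{corInvCont}, the paper directly through Theorem~\ref{thmweakstarconv} plus Theorem~\ref{thmInvUniq}, which amounts to the same chain). Your explicit verification that the exceptional sets $\sigma_\pm$ are empty because the truncated norming constants stabilize is a detail the paper leaves implicit.
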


 \begin{proof}
  The results in \cite[Section~5]{bss} (or less immediately applicable also in \cite{dymmck}, \cite{kakr}, \cite{kotwat}) show that there are unique measures $\omega_{R,\pm}$, $R>0$ with the associated spectral measures 
  \begin{align*}
   \sum_{\lambda\in\sigma\cap[-R,R]} \gamma_{\lambda,\pm}^{-2} \delta_\lambda.
  \end{align*}
  From compactness of $\M_\sigma$ there is a subsequence $R_n$, $n\in\N$ such that $\omega_{R_n,\pm}$ converges to some $\omega\in\M$ in the weak$^\ast$ topology. Now the result in Theorem~\ref{thmweakstarconv} shows that~\eqref{eqnInvEx} is the spectral measure corresponding to $\omega$. 
 Also note that since the weight measure is uniquely determined by the spectral measure, this shows that the measures $\omega_{R,\pm}$ actually converge to $\omega$ in the weak$^\ast$ topology as $R\rightarrow\infty$.
 \end{proof}
 
 It is also possible to tell from the spectral measure whether the differential expression is in the limit-circle case near some endpoint or not. 

 \begin{corollary}\label{corinvproblc}
  For each positive or negative measure $\omega\in\M$ with corresponding spectrum $\sigma(T)$ and norming constants $\gamma_{\lambda,\pm}^2$, $\lambda\in\sigma(T)$ we have 
 \begin{align}\label{eqnLCPars}
  \int_\R \E^{\pm x} d\omega(x) = \sum_{\lambda\in\sigma(T)} \frac{ \gamma_{\lambda,\pm}^{-2}}{\lambda^{2}},
 \end{align}
 in the sense that one side is finite if and only if the other one is.
 \end{corollary}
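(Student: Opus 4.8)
The strategy is to connect the limit-circle condition $\int_\R \E^{\pm x}d\omega(x)<\infty$ to a summability statement about the spectral data through the behavior of the transform $\mathcal{F}_\pm$ near the generalized eigenfunction at the singular endpoint. First I would observe that, for a sign-definite measure $\omega$, the operator $T$ is self-adjoint in the ordinary Hilbert space $\Lrom$ (the Krein space inner product agrees up to sign), so $\mathcal{F}_\pm$ is a genuine unitary map onto $L^2(\R;\rho_\pm)$ and Parseval's identity holds without the Riesz-basis caveat of Proposition~\ref{propFtrans}. The key functional-analytic fact I want is that $\tau$ is in the limit-circle case near $\pm\infty$ precisely when the solution $\phi_\mp(z,\cdot\,)$ — the one that decays at the \emph{opposite} endpoint — is square integrable with respect to $|\omega|$ near $\pm\infty$, i.e. when $\phi_\mp(z,\cdot\,)\in\Lrom$. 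This is exactly the endpoint at which we imposed the boundary condition, and limit-circle there means every solution lies in $\Lrom$ near $\pm\infty$.

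The natural computation is then to apply Parseval to $\phi_\mp(z,\cdot\,)$ for some convenient $z\in\rho(T)$, say $z=0$. By Lemma~\ref{lemTransWeyl}, in the limit-circle case we have $\mathcal{F}_\pm\phi_\mp(z,\cdot\,)(\lambda)=W(z)/(\lambda-z)$, so
\begin{align*}
 \|\phi_\mp(z,\cdot\,)\|_{\Lrom}^2 = \int_\R \left|\frac{W(z)}{\lambda-z}\right|^2 d\rho_\pm(\lambda) = |W(z)|^2 \sum_{\lambda\in\sigma(T)} \frac{\gamma_{\lambda,\pm}^{-2}}{|\lambda-z|^2}.
\end{align*}
Setting $z=0$ and using $W(0)=1$ gives $\|\phi_\mp(0,\cdot\,)\|_{\Lrom}^2 = \sum_\lambda \gamma_{\lambda,\pm}^{-2}/\lambda^2$, which is the right-hand side of \eqref{eqnLCPars}. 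On the other hand, from Theorem~\ref{thmSpecResPHI} we know $\phi_\mp(0,x)=\E^{\pm x/2}$ exactly, so the left-hand norm is $\int_\R \E^{\pm x}d|\omega|(x)=\pm\int_\R \E^{\pm x}d\omega(x)$ for a sign-definite $\omega$, matching \eqref{eqnLCPars} (the sign being absorbed since $\lambda\gamma_{\lambda,\pm}^2>0$ forces all summands positive).

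The main obstacle is that Lemma~\ref{lemTransWeyl} and the Parseval identity were established \emph{assuming} the limit-circle case, so the clean computation above only proves the ``finite $\Rightarrow$ finite'' direction directly. To get the full equivalence ``one side is finite iff the other is,'' I would argue in the limit-point case that both sides diverge. Concretely, I would use a truncation or monotone-convergence argument: approximate $\phi_\mp(0,\cdot\,)=\E^{\pm x/2}$ by its restriction to bounded intervals, apply Parseval to these genuine $\Lrom$-functions, and let the truncation exhaust $\R$. Since $\E^{\pm x/2}$ fails to be in $\Lrom$ exactly in the limit-point case, the left side $\int_\R\E^{\pm x}d|\omega|$ diverges; and the same truncation on the spectral side forces the partial sums $\sum_\lambda \gamma_{\lambda,\pm}^{-2}/\lambda^2$ to be unbounded, by unitarity of $\mathcal{F}_\pm$ applied to the truncations. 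Thus both divergence and convergence transfer across the transform, giving the stated equivalence. The delicate point is justifying the limiting argument in the limit-point case, where one must argue that no miraculous cancellation can keep the spectral sum bounded while the spatial integral blows up — this follows because all terms on both sides are nonnegative.
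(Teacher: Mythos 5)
Your first direction (left-hand side finite $\Rightarrow$ equality) is exactly the paper's argument: for sign-definite $\omega$ the transform $\mathcal{F}_\pm$ is genuinely unitary, $\phi_\mp(0,\cdot)=\E^{\pm x/2}$ lies in $\Lrom$ precisely in the limit-circle case, and Lemma~\ref{lemTransWeyl} with $z=0$ and $W(0)=1$ gives the identity via Parseval. The converse direction is where your proposal has a genuine gap. Applying Parseval to the spatial truncations $f_R=\E^{\pm x/2}\indik_{[-R,R]}$ yields $\int_{-R}^{R}\E^{\pm x}d\omega=\sum_{\lambda}\gamma_{\lambda,\pm}^{-2}\,|\mathcal{F}_\pm f_R(\lambda)|^2$, but the coefficients $\mathcal{F}_\pm f_R(\lambda)=\int_{-R}^{R}\phi_\pm(\lambda,x)\E^{\pm x/2}d\omega(x)$ are \emph{not} the numbers $1/\lambda$; they carry Lagrange-identity boundary terms at $\pm R$. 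One can check that $\mathcal{F}_\pm f_R(\lambda)\to 1/\lambda$ pointwise for eigenvalues (since $\phi_+(\lambda,\cdot)$ and $\phi_-(\lambda,\cdot)$ are proportional there), but then positivity of the summands only gives Fatou's inequality $\sum_{\lambda}\gamma_{\lambda,\pm}^{-2}\lambda^{-2}\leq\liminf_R\int_{-R}^{R}\E^{\pm x}d\omega$, which is the direction you already have and is vacuous when the integral diverges. To conclude that divergence of the integral forces divergence of the spectral sum you would need the reverse inequality, i.e.\ a domination $|\mathcal{F}_\pm f_R(\lambda)|\leq g(\lambda)$ uniformly in $R$ with $\sum\gamma_{\lambda,\pm}^{-2}g(\lambda)^2<\infty$. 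The only uniform bounds available for $\phi_\pm(\lambda,x)$ grow exponentially in $|\lambda|$, so no such domination is at hand; the Parseval mass can in principle escape to large $|\lambda|$. This is not a cancellation issue, so appealing to nonnegativity does not close it.

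The paper circumvents this by truncating on the \emph{spectral} side instead of the spatial side: using Theorem~\ref{thmInvExis} it produces measures $\omega_R$ whose spectral measure is the restriction of $\rho_\pm$ to $[-R,R]$. These are compactly supported multi-peakon measures, hence automatically limit-circle, so the already-proved direction gives $\int_\R\E^{\pm x}\chi\,d\omega_R\leq\sum_{\lambda}\gamma_{\lambda,\pm}^{-2}\lambda^{-2}$ for every cutoff $\chi$, uniformly in $R$; weak$^\ast$ convergence $\omega_R\rightharpoonup^\ast\omega$ then transfers this bound to $\omega$ and yields finiteness of the left-hand side. If you wish to keep your spatial-truncation framework, you must supply the missing uniform domination of $\mathcal{F}_\pm f_R$; otherwise the inverse-spectral truncation is the route that actually works.
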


\begin{proof}
 If the left-hand side in~\eqref{eqnLCPars} is finite, then equality follows from Proposition~\ref{propFtrans} and Lemma~\ref{lemTransWeyl}.
 Conversely, if the right-hand side is finite, then we may consider approximating measures $\omega_R$, $R>0$ as in the proof of Theorem~\ref{thmInvExis}. For each continuous cutoff function $\chi$ which takes values in $[0,1]$ we have
 \begin{align*}
 \int_\R \E^{\pm x} \chi(x)d\omega_R(x) \leq \int_\R \E^{\pm x}d\omega_R(x) = \mathop{\sum_{\lambda\in\sigma(T)}}_{|\lambda|\leq R} \frac{\gamma_{\lambda,\pm}^{-2}}{\lambda^2} \leq \sum_{\lambda\in\sigma(T)} \frac{\gamma_{\lambda,\pm}^{-2}}{\lambda^2}, \quad R>0.
  \end{align*}
  Because of $\omega_R\rightharpoonup^\ast\omega$ we infer that also  
  \begin{align*}
   \int_\R \E^{\pm x} \chi(x) d\omega(x) \leq \sum_{\lambda\in\sigma(T)} \frac{\gamma_{\lambda,\pm}^{-2}}{\lambda^2}
  \end{align*}
 and hence the left-hand side in~\eqref{eqnLCPars} is finite.
\end{proof}

In particular, this shows that finiteness of the left-hand side in~\eqref{eqnLCPars} is preserved under the Camassa--Holm flow (cf.\ \eqref{def:gamt} below).

\section{The Camassa--Holm flow}\label{secCH}

In this section we will apply our results to deduce some facts about the dispersionless Camassa--Holm equation
\begin{equation}
u_{t}-u_{txx}+3uu_x=2u_x u_{xx}+u u_{xxx},\qquad x,\,t\in\R.
\end{equation}
We say a family of finite signed measures $\omega(\,\cdot\,,t)$, parametrized by time $t\in I\subseteq\R$, evolves according to the Camassa--Holm flow if they are isospectral (with spectrum denoted by $\sigma$) and their associated norming constants evolve according to  
\begin{align}\label{def:gamt}
  \gamma_{\lambda,\pm}^2(t) = \E^{\mp\frac{t-t_0}{2\lambda}} \gamma_{\lambda,\pm}^2(t_0), \quad t,\, t_0\in I,~\lambda\in\sigma.
\end{align}
Provided $\om$ is sufficiently regular,
\begin{equation}
u(x,t) = \frac{1}{2} \int_\R \E^{-|x-s|}d\omega(s,t), \quad x\in\R,~t\in I
\end{equation}
will be a solution of the Camassa--Holm equation and in the general case we will use this as our definition of a weak
solution.

The most basic examples of (weak) solutions of the Camassa--Holm equation are so-called peakon solutions, given by  
 \begin{align*}
  \omega_{p}(\,\cdot\,,t) = 2c\,\delta_{ct-\eta}, \quad t \in\R
 \end{align*}
 for some nonzero constant $c\in\R$ and $\eta\in\R$.
 According to the preceding sections (in particular, see Proposition~\ref{propInverse}), the only eigenvalue $\lambda$ corresponding to these measures is given by $\lambda^{-1}=2c$.
 Moreover, it is immediate from~\eqref{eqnNorm} that the time dependent norming constant is given by 
 \begin{align*}
  \gamma_{\lambda,\pm}^2(t)
                             = \frac{1}{\lambda}\, \E^{\mp\frac{t}{2\lambda} \pm \eta},
 \end{align*}
 which shows that $\omega_p(\,\cdot\,,t)$, $t\in\R$ evolves according to the Camassa--Holm flow indeed.
 Note that the term peakon stems from the profile of the function  
 \begin{align*}
  \int_\R \E^{-|x-s|}d\omega_{p}(s,t) = \frac{1}{\lambda}\, \E^{-\left|x-\frac{t}{2\lambda}+\eta\right|}, \quad x,\, t\in\R,
 \end{align*}
 which will be re-encountered in the following section. 
 More generally, it is also possible to consider solutions which at each fixed time $t\in I$ are a finite sum of weighted Dirac measures, referred to as multi-peakon solutions. 
 However, in this case the height and position of the single peaks evolve in a nonlinear fashion.  

 In order to state the following result, let $\omega_n(\,\cdot\,,t)$, $n\in\N$ and $\omega(\,\cdot\,,t)$ be some families of finite signed measures parametrized by time $t\in I$, evolving according to the Camassa--Holm flow with spectra contained in some discrete set $\sigma$ satisfying~\eqref{eqnEVsumabl}. 

\begin{theorem}
 If the measures $\omega_n(\,\cdot\,,t_0)$ converge to $\omega(\,\cdot\,,t_0)$ in the weak$^\ast$ topology for some $t_0\in I$, then for each fixed $t\in I$  the measures $\omega_n(\,\cdot\,,t)$ converge to $\omega(\,\cdot\,,t)$ in the weak$^\ast$ topology provided they are uniformly bounded. 
\end{theorem}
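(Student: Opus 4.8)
The plan is to reduce the time-$t$ statement to the time-$t_0$ statement using the inverse-continuity machinery of Section~\ref{secCC}, exploiting that the Camassa--Holm flow acts on the spectral data in a completely explicit and controlled way. First I would invoke Theorem~\ref{thmweakstarconv}: since the measures $\omega_n(\,\cdot\,,t_0)$ lie in $\M_\sigma$ and converge weak$^\ast$ to $\omega(\,\cdot\,,t_0)$, after passing to a subsequence we obtain that $\sigma(S_n)$ converges to some $\sigma_\infty\subseteq\sigma$ and that the renormalized norming constants $\lambda\gamma_{n,\lambda,\pm}^2(t_0)$ converge to limits in $[0,\infty]$, with a partition $\sigma_\infty = (\sigma(S)\,\dot\cup\,\sigma_-\,\dot\cup\,\sigma_+)$ describing which eigenvalues survive in the limit and which escape to the two endpoints. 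Because the limit measure at $t_0$ is assumed to be $\omega(\,\cdot\,,t_0)$ itself, the inverse uniqueness Theorem~\ref{thmInvUniq} forces the surviving spectrum to be exactly $\sigma(S)$ and the limiting norming constants to match those of $\omega(\,\cdot\,,t_0)$.

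The crucial observation is that the time evolution~\eqref{def:gamt} multiplies each norming constant by the strictly positive scalar $\E^{\mp\frac{t-t_0}{2\lambda}}$, a factor that depends only on the eigenvalue $\lambda$ and not on $n$. Consequently, for each fixed $\lambda\in\sigma$ the quantities $\lambda\gamma_{n,\lambda,\pm}^2(t)$ and $\lambda\gamma_{n,\lambda,\pm}^2(t_0)$ differ by this common factor, so their convergence behavior is identical: a sequence converging to $0$, to a finite positive value, or to $\infty$ at time $t_0$ does exactly the same at time $t$. In particular, the partition $\sigma_-\,\dot\cup\,\sigma_+$ and the surviving set $\sigma_\infty\backslash(\sigma_-\cup\sigma_+) = \sigma(S)$ are the same at both times, and the finite limiting values at time $t$ are precisely those at time $t_0$ multiplied by $\E^{\mp\frac{t-t_0}{2\lambda}}$ — which is exactly how the norming constants of the limit family $\omega(\,\cdot\,,t)$ evolve according to~\eqref{def:gamt}.

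With these convergent spectral data in hand I would apply Corollary~\ref{corInvCont}: since the $\omega_n(\,\cdot\,,t)$ are assumed uniformly bounded and lie in $\M_\sigma$, the corollary yields weak$^\ast$ convergence of a subsequence to a measure in $\M_\sigma$ whose spectrum is $\sigma(S)$ and whose norming constants are the prescribed limits, namely $\gamma_{\infty,\lambda,\pm}^2(t)\prod_{\kappa\in\sigma_\pm}(1-\lambda/\kappa)^{-2}$. By the computation above these are exactly the norming constants of $\omega(\,\cdot\,,t)$, so the inverse uniqueness Theorem~\ref{thmInvUniq} identifies the weak$^\ast$ limit as $\omega(\,\cdot\,,t)$. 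A standard subsequence argument then upgrades subsequential convergence to full convergence of $\omega_n(\,\cdot\,,t)\rightharpoonup^\ast\omega(\,\cdot\,,t)$.

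The step I expect to be most delicate is tracking the partition $\sigma_-\,\dot\cup\,\sigma_+$ consistently across the two times and making sure the renormalization factors in Corollary~\ref{corInvCont} are handled with the correct signs for the two choices $\pm$. One must verify that the explicit factor $\E^{\mp\frac{t-t_0}{2\lambda}}$, being positive and $n$-independent, commutes cleanly with all the limit operations — in particular that it does not alter which eigenvalues are sent to $0$ versus $\infty$ — and that the resulting norming constants genuinely coincide with those dictated by~\eqref{def:gamt} for $\omega(\,\cdot\,,t)$ rather than merely being proportional to them. Once this bookkeeping is pinned down, the argument closes by the same uniqueness-plus-compactness pattern used in Corollary~\ref{corInvCont}.
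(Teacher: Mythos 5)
Your proposal is correct and follows essentially the same route as the paper: apply Theorem~\ref{thmweakstarconv} at time $t_0$, note that the flow \eqref{def:gamt} rescales each norming constant by a positive, $n$-independent factor so the same subsequential limits (including the partition $\sigma_-\,\dot\cup\,\sigma_+$ and the $0$/$\infty$ dichotomy) persist at time $t$, then conclude via Corollary~\ref{corInvCont}, Theorem~\ref{thmInvUniq}, and a subsequence/compactness argument. The bookkeeping with the correction factors $\prod_{\kappa\in\sigma_\pm}(1-\lambda/\kappa)^{\pm 2}$ that you flag as delicate does cancel exactly as you anticipate, so no gap remains.
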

 
 \begin{proof}
 By assumption and Theorem~\ref{thmweakstarconv} there is a subsequence $\omega_{n_k}(\,\cdot\,,t_0)$ such that $\sigma(S_{n_k}(t_0))$ converges to some $\sigma_\infty\subseteq\sigma$ as $k\rightarrow\infty$ and 
   \begin{align*}
   \lim_{k\rightarrow\infty} \lambda \gamma_{n_k,\lambda,\pm}^2(t_0) = \begin{cases} 0, & \lambda\in\sigma_\pm, \\ \lambda \gamma_{\lambda,\pm}^2(t_0) \prod_{\kappa\in\sigma_\pm} \left(1-\frac{\lambda}{\kappa}\right)^2, & \lambda\in\sigma(S(t_0)), \\ \infty, & \lambda\in\sigma_\mp,  \end{cases}
  \end{align*}
  for some partition $\sigma_-\dot{\cup}\,\sigma_+$ of $\sigma_\infty\backslash\sigma(S(t_0))$.
 But then for each fixed $t\in I$, $\sigma(S_{n_k}(t))$ converges to $\sigma_\infty$ as well and 
    \begin{align*}
   \lim_{k\rightarrow\infty} \lambda \gamma_{n_k,\lambda,\pm}^2(t) = \begin{cases} 0, &  \lambda\in\sigma_\pm, \\ \lambda \gamma_{\lambda,\pm}^2(t) \prod_{\kappa\in\sigma_\pm} \left(1-\frac{\lambda}{\kappa}\right)^2, & \lambda\in\sigma(S(t)), \\ \infty, & \lambda\in\sigma_\mp.  \end{cases}
  \end{align*}
  Hence Corollary~\ref{corInvCont} and our inverse uniqueness theorem imply that $\omega_{n_k}(\,\cdot\,,t)$ converges to $\omega(\,\cdot\,,t)$ in the weak$^\ast$ topology. 
  Now the claim follows from these considerations upon using a simple compactness argument.
 \end{proof}

 As a simple consequence of Theorem~\ref{thmInvExis} we may immediately deduce existence of global solutions of the Camassa--Holm equation for initial data which are of one sign. 
 More precisely, Theorem~\ref{thmInvExis} guarantees that for each given positive or negative weight measure $\omega_0\in\M$ there is a unique family of finite signed measures $\omega(\,\cdot\,,t)$, $t\in\R$ evolving according to the Camassa--Holm flow, with the initial condition $\omega(\,\cdot\,,0)=\omega_0$. 
 The following result of the type of \cite[Theorem~3.1]{holray} shows that this solution may be approximated by multi-peakon solutions in the weak$^\ast$ topology, locally uniformly in time.
  Here by locally uniformly in time we mean with respect to some (arbitrary) metric which induces the weak$^\ast$ topology on $\M_\sigma$, where $\sigma$ is the spectrum associated with $\omega_0$. 
 In order to state this result, consider for each $n\in\N$ the multi-peakon solution $\omega_n(\,\cdot\,,t)$, $t\in\R$ which is obtained from Theorem~\ref{thmInvExis} upon cutting off the spectral measures of $\omega(\,\cdot\,,t)$, $t\in\R$ outside of the interval $[-n,n]$. 
 
 \begin{corollary}
  The measures $\omega_n(\,\cdot\,,t)$ of the multi-peakon solutions converge to $\omega(\,\cdot\,,t)$ in the weak$^\ast$ topology, locally uniformly in $t\in\R$. 
 \end{corollary}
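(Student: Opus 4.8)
The plan is to combine the two main tools already established in this paper: the inverse spectral uniqueness/continuity result (Corollary~\ref{corInvCont}) and the explicit time evolution of the norming constants \eqref{def:gamt}, together with the cutoff-convergence observation in Theorem~\ref{thmInvExis}. The statement to be proven is that the multi-peakon approximants $\omega_n(\,\cdot\,,t)$, obtained by truncating the spectral measure of the sign-definite solution $\omega(\,\cdot\,,t)$ to $[-n,n]$, converge weak$^\ast$ to $\omega(\,\cdot\,,t)$ locally uniformly in $t\in\R$. First I would recall that since $\omega_0$ is of one sign, its spectrum $\sigma$ is positive (or negative) and satisfies \eqref{eqnEVsumabl}; consequently $\M_\sigma$ is compact (as noted at the end of Section~\ref{secCC}), which makes all the compactness machinery available and, crucially, makes the metric inducing weak$^\ast$ convergence on $\M_\sigma$ a genuine metric on a compact space.

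The core of the argument is pointwise-in-$t$ convergence, which I would establish directly from Corollary~\ref{corInvCont}. For each fixed $t\in\R$, the spectrum of every $\omega_n(\,\cdot\,,t)$ is the truncated set $\sigma\cap[-n,n]$, which converges to $\sigma=\sigma(S(t))$ as $n\to\infty$; thus $\sigma_\infty=\sigma(S(t))$ and the partition sets $\sigma_\pm$ in Corollary~\ref{corInvCont} are empty. The norming constants of $\omega_n(\,\cdot\,,t)$ are, by construction via Theorem~\ref{thmInvExis}, exactly those of $\omega(\,\cdot\,,t)$ for $\lambda\in\sigma\cap[-n,n]$, so $\lambda\gamma_{n,\lambda,\pm}^2(t)\to\lambda\gamma_{\lambda,\pm}^2(t)$ for every $\lambda\in\sigma$. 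Since $\sigma$ is positive (or negative) the set $\M_\sigma$ is bounded, so the uniform boundedness hypothesis of Corollary~\ref{corInvCont} is automatic. Therefore Corollary~\ref{corInvCont} gives $\omega_n(\,\cdot\,,t)\rightharpoonup^\ast\omega(\,\cdot\,,t)$ for each fixed $t$, with empty $\sigma_\pm$ so that the limiting spectrum is all of $\sigma$ and the limiting norming constants are unchanged.

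To upgrade pointwise convergence to local uniformity in $t$, I would argue by contradiction using compactness of $\M_\sigma$. Suppose convergence fails to be locally uniform on some compact time interval $J\subseteq\R$; then there exist $\eps>0$, a subsequence $n_k$, and times $t_k\in J$ with $d(\omega_{n_k}(\,\cdot\,,t_k),\omega(\,\cdot\,,t_k))\geq\eps$, where $d$ is the chosen metric. Passing to a further subsequence, $t_k\to t_\ast\in J$. Because the spectra lie in $\sigma$ and the norming constants evolve continuously in $t$ through the explicit exponential factor $\E^{\mp(t-t_0)/2\lambda}$ in \eqref{def:gamt}, one checks that $\omega(\,\cdot\,,t_k)\rightharpoonup^\ast\omega(\,\cdot\,,t_\ast)$ by Corollary~\ref{corInvCont}; likewise, a diagonal argument combining the $t$-continuity of the $\omega_{n_k}(\,\cdot\,,t)$ with the just-proved pointwise convergence at $t_\ast$ forces $\omega_{n_k}(\,\cdot\,,t_k)\rightharpoonup^\ast\omega(\,\cdot\,,t_\ast)$ as well, contradicting the separation $d(\,\cdot\,,\cdot\,)\geq\eps$.

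The main obstacle I anticipate is the interchange of the two limits (in $n$ and in $t$) in the last paragraph: showing that $\omega_{n_k}(\,\cdot\,,t_k)$ and $\omega(\,\cdot\,,t_k)$ stay close requires equicontinuity in $t$ of the approximants, uniformly in $k$. The clean way to obtain this is to exploit that the entire $t$-dependence is encoded in the single explicit factor \eqref{def:gamt}, which is jointly continuous in $(t,\lambda)$ and—since $\sigma$ is bounded away from zero by \eqref{eqnEVsumabl}—uniformly continuous in $t$ on $J$ at a rate independent of the truncation level $n$. Feeding this uniform rate into the continuity provided by Corollary~\ref{corInvCont} yields the required equicontinuity, closing the compactness argument. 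Everything else reduces to the bookkeeping already set up in Theorem~\ref{thmweakstarconv} and Corollary~\ref{corInvCont}, so no genuinely new estimate is needed.
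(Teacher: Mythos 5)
Your proposal is correct and in substance follows the same route as the paper: both arguments rest on compactness of $\M_\sigma$ (valid since $\sigma$ is of one sign) together with Theorem~\ref{thmweakstarconv}, Corollary~\ref{corInvCont} and the inverse uniqueness result, the only difference being that the paper packages the upgrade from pointwise to locally uniform convergence as uniform continuity of the (injective, continuous) spectral map and its inverse on the compact set $\M_{\sigma}(T)$, whereas you run a subsequence/contradiction argument. The equicontinuity issue you flag as the main obstacle is in fact not needed: Corollary~\ref{corInvCont} applies directly to the diagonal sequence $\omega_{n_k}(\,\cdot\,,t_k)$, since its spectra $\sigma\cap[-n_k,n_k]$ converge to $\sigma$ and its norming constants $\gamma_{\lambda,\pm}^2(t_k)$ converge to $\gamma_{\lambda,\pm}^2(t_\ast)$ by the explicit evolution \eqref{def:gamt}, which already yields $\omega_{n_k}(\,\cdot\,,t_k)\rightharpoonup^\ast\omega(\,\cdot\,,t_\ast)$ and closes the contradiction.
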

 
 \begin{proof}
  For each $T>0$ the set
 \begin{align*}
    \M_{\sigma}(T) = \lbrace \omega\in\M_\sigma \,|\, \pm \gamma_{\omega,\lambda,\pm}^{2} \in [\pm \gamma_{\lambda,\pm}^{2}(T), \pm \gamma_{\lambda,\pm}^{2}(-T)], ~ \lambda\in\sigma(S_\omega)\rbrace
 \end{align*}
 is compact with respect to the weak$^\ast$ topology by Theorem~\ref{thmweakstarconv}. 
 Moreover, the mapping $\omega\mapsto \rho$ is uniformly continuous on $\M_{\sigma}(T)$, where the set of spectral measures $\rho$ (corresponding to $\M_\sigma$) is identified with the sequences $(\rho(\lbrace\lambda\rbrace))_{\lambda\in\sigma}$ and equipped with the product topology (pointwise convergence) there. 
 Hence, the inverse of this mapping is uniformly continuous as well which proves the claim. 
 \end{proof}
   
 Another application of the results in this paper concern the construction of a Lipschitz metric for the Camassa--Holm equation, as done recently in \cite{grhora}. 
 More precisely, our inverse uniqueness result allows us to define such a metric on $\M$ using the corresponding spectral data.  
 Therefore consider some bounded metric $d$ on the projective extended real line $\R^\infty = \R\cup\lbrace\infty\rbrace$ which obeys
\begin{align*}
 d(\tau_1 + t, \tau_2 + t) \leq \E^t d(\tau_1, \tau_2), \quad \tau_1,\, \tau_2\in\R^\infty,
\end{align*}
for each positive $t\geq 0$ and
\begin{align*}
 d(\tau + t, \tau + s) \leq |t-s|, \quad s,\, t\in\R
\end{align*}
for every $\tau\in\R$. 
 For example, one may take the metric given by 
  \begin{align*}
   d(\tau_1, \tau_2) = \min\left(\int_{\tau_1}^{\tau_2} \frac{d\eta}{1+\eta^2}, \int_{-\infty}^{\tau_1} \frac{d\eta}{1+\eta^2} + \int_{\tau_2}^\infty \frac{d\eta}{1+\eta^2} \right)
\end{align*}
 for all $\tau_1$, $\tau_2\in\R^\infty$ with $\tau_1\leq \tau_2$. 
 Given such a metric $d$, we are able to define a metric on $\M_\sigma$ (where $\sigma\subseteq\R$ satisfies \eqref{eqnEVsumabl}) by   
\begin{align*}
   d_{\sigma,\pm}(\omega_1,\omega_2) = \sum_{\lambda\in\sigma} \frac{1}{|\lambda|} d\left(\mp\sgn(\lambda) \ln \left(\lambda \gamma_{1,\lambda,\pm}^{2}\right), \mp \sgn(\lambda) \ln \left(\lambda \gamma_{2,\lambda,\pm}^{2}\right)\right)
\end{align*}
 for $\omega_1$, $\omega_2\in\M_\sigma$, 
 where we make the convention that $\ln(\lambda \gamma_{j,\lambda,\pm}^2) = \infty$ if $\lambda$ is not an eigenvalue of $S_j$, in formal agreement with~\eqref{eqnNorm}. 
 This metric is easily verified to be a Lipschitz metric for the Camassa--Holm equation in the following sense. 
 If $\omega_1(\,\cdot\,,t)$, $\omega_2(\,\cdot\,,t)\in\M_\sigma$, $t\in I$ evolve according to the Camassa--Holm flow, then 
 \begin{align}\label{eqnLM1}
   d_{\sigma,\pm}(\omega_1(\,\cdot\,,t),\omega_2(\,\cdot\,,t)) \leq \E^{\frac{t-t_0}{2\Lambda}} d_{\sigma,\pm}(\omega_1(\,\cdot\,,t_0),\omega_2(\,\cdot\,,t_0)), \quad t,\, t_0\in I,~ t\geq t_0.
  \end{align} 
 Here, $\Lambda$ is some lower bound for the absolute values of the associated eigenvalues, for example 
 \[
  \Lambda= \min_{\lambda\in\sigma} |\lam| >0.
 \]
 Moreover, if $\omega(\,\cdot\,,t)\in\M_\sigma$, $t\in I$ evolves according to the Camassa--Holm flow, then
 \begin{align}\label{eqnLM2}
  d_{\sigma,\pm}(\omega(\,\cdot\,,t),\omega(\,\cdot\,,s)) \le \frac{|t-s|}{2} \sum_{\lambda\in\sigma} \frac{1}{|\lambda|^2}, \quad t,\, s\in I.
 \end{align}
It is also possible to extend the metric $d_{\sigma,\pm}$ to all of $\M$ by 
  \begin{align*}
    d_\pm(\omega_1,\omega_2) = d_{\sigma(S_1)\cup\sigma(S_2),\pm}(\omega_1,\omega_2)
  \end{align*}
  for $\omega_1$, $\omega_2\in\M$, 
  which is a Lipschitz metric as well, in the sense that \eqref{eqnLM1} and \eqref{eqnLM2} hold but with the Lipschitz constant depending on the (associated spectra of the) respective solutions of the Camassa--Holm equation in an obvious way. Theorem~\ref{thmweakstarconv} and Corollary~\ref{corInvCont} show that the topology induced by this metric $d_\pm$ is not comparable with the weak$^\ast$ topology on $\M$. However, restricted to isospectral sets, the two topologies coincide.

 \section{Long-time asymptotics}\label{secLT}
 
In this final section we will prove that solutions of the Camassa--Holm equation asymptotically split into an (in general infinite) train of well separated single peakons, each of which corresponding to an eigenvalue of the underlying isospectral problem. 
Therefore, consider a family of finite signed measures $\omega(\,\cdot\,,t)$, $t\in I\subseteq\R$ which evolve according to the Camassa--Holm flow with spectrum
denoted by $\sigma$.
The continuity results for the inverse spectral problem in Section~\ref{secCC} now yield the following long-time asymptotics. 

 \begin{theorem}\label{thmLT}
  Suppose that the measures $\omega(\,\cdot\,,t)$ are uniformly bounded in $t\in I$. 
  Then for each $x_0\in\R$ and $c\in\R$ we have the asymptotics
 \begin{align}\label{eqnLTA}
   \int_\R \E^{-|x-s|}d\omega(s,t) = \sum_{\lambda\in\sigma} \frac{1}{\lambda}\, \E^{-\left|x-\frac{t}{2\lambda} + \eta_\lambda\right|} + \oo(1)
 \end{align}
 as $t\rightarrow\infty$ in $I$ along the ray $x=x_0 + ct$, where the phase shifts $\eta_\lambda$ are given by
 \begin{align}
  \E^{\pm\eta_\lambda} = \lambda \gamma_{\lambda,\pm}^2(t_0) \E^{\pm\frac{t_0}{2\lambda}} \mathop{\prod_{\kappa\in\sigma}}_{\pm\kappa^{-1}>\pm\lambda^{-1}} \biggl(1-\frac{\lambda}{\kappa}\biggr)^{-2}, \quad \lambda\in\sigma, ~t_0\in I.
 \end{align}
\end{theorem}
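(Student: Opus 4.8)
The plan is to fix the ray $x=x_0+ct$ and reduce the claim to the weak$^\ast$ convergence of a spatially translated family of weight measures to a single peakon, reading off the limit from the continuity results of Section~\ref{secCC} together with the time evolution~\eqref{def:gamt}. First I would introduce the translated measures $\ti\omega_t$ defined by $d\ti\omega_t(\xi)=d\omega(\xi+x_0+ct,t)$. These have the same total variation as $\omega(\,\cdot\,,t)$, hence are uniformly bounded, and they carry the same spectrum $\sigma$, so that $\ti\omega_t\in\M_\sigma$. A short computation shows that translating the weight by $a$ translates the solutions $\phi_\pm$ accordingly, up to the normalizing factor $\E^{\mp a/2}$, so that the norming constants simply transform as $\gamma_{\lambda,\pm}^2\mapsto\E^{\mp a}\gamma_{\lambda,\pm}^2$. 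Combined with~\eqref{def:gamt} this yields, for the translated family,
\begin{align*}
 \lambda\,\ti\gamma_{\lambda,\pm}^2(t)=\E^{\pm t\left(c-\frac{1}{2\lambda}\right)}\,\E^{\pm x_0}\,\E^{\pm\frac{t_0}{2\lambda}}\,\lambda\gamma_{\lambda,\pm}^2(t_0), \quad \lambda\in\sigma.
\end{align*}

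As $t\to\infty$ the factor $\E^{\pm t(c-\frac{1}{2\lambda})}$ governs the limit: in the $+$ normalization, $\lambda\ti\gamma_{\lambda,+}^2(t)$ tends to zero when $\lambda^{-1}>2c$, to infinity when $\lambda^{-1}<2c$, and to the finite positive value $\E^{x_0}\E^{t_0/(2\lambda)}\lambda\gamma_{\lambda,+}^2(t_0)$ precisely when $\lambda^{-1}=2c$. By the assumed uniform boundedness and Corollary~\ref{corInvCont}, the (a priori subsequential, but by the inverse uniqueness Theorem~\ref{thmInvUniq} actually unique) weak$^\ast$ limit $\ti\omega_\infty\in\M_\sigma$ therefore survives only on $\{\lambda_0\}$ with $\lambda_0=\frac{1}{2c}$, and equals the zero measure when $\frac{1}{2c}$ is not a (finite) eigenvalue. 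Thus $\ti\omega_\infty$ is a single peakon $2c\,\delta_{p_\infty}$, whose norming constant is obtained from Corollary~\ref{corInvCont} with the partition set $\sigma_+=\lbrace\kappa\in\sigma\,|\,\kappa^{-1}>\lambda_0^{-1}\rbrace$:
\begin{align*}
 \lambda_0\gamma_{\ti\omega_\infty,\lambda_0,+}^2=\E^{x_0}\E^{\frac{t_0}{2\lambda_0}}\lambda_0\gamma_{\lambda_0,+}^2(t_0)\mathop{\prod_{\kappa\in\sigma}}_{\kappa^{-1}>\lambda_0^{-1}}\biggl(1-\frac{\lambda_0}{\kappa}\biggr)^{-2}=\E^{x_0}\E^{\eta_{\lambda_0}}.
\end{align*}
Since a single peakon $2c\,\delta_p$ satisfies $\lambda_0\gamma_{\lambda_0,+}^2=\E^{-p}$, this pins down $p_\infty=-x_0-\eta_{\lambda_0}$, and hence $\int_\R\E^{-|\xi-s|}d\ti\omega_\infty(s)=\frac{1}{\lambda_0}\E^{-|\xi+x_0+\eta_{\lambda_0}|}$ (the $-$ normalization gives the same $\eta_{\lambda_0}$ by the consistency of the two product formulas, which follows from~\eqref{eqnWlam} and Corollary~\ref{corWrep}).

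It then remains to transfer this back along the ray and compare with the peakon train. By the change of variables defining $\ti\omega_t$ one has $\int_\R\E^{-|x-s|}d\omega(s,t)=\int_\R\E^{-|s|}d\ti\omega_t(s)$ for $x=x_0+ct$, and since $s\mapsto\E^{-|s|}$ belongs to $C_0(\R)$, weak$^\ast$ convergence gives the limit $\frac{1}{\lambda_0}\E^{-|x_0+\eta_{\lambda_0}|}$ (respectively $0$). On the other hand, evaluating the right-hand side of~\eqref{eqnLTA} along the ray produces $\sum_{\lambda\in\sigma}\frac{1}{\lambda}\E^{-|x_0+\eta_\lambda+t(c-\frac{1}{2\lambda})|}$; every summand with $\lambda\neq\lambda_0$ tends to zero, the $\lambda_0$-summand stays equal to $\frac{1}{\lambda_0}\E^{-|x_0+\eta_{\lambda_0}|}$, and dominated convergence (using $\sum_{\lambda\in\sigma}|\lambda|^{-1}<\infty$ from~\eqref{eqnEVsumabl}) shows that the whole sum converges to the same limit. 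Consequently the two sides differ by $\oo(1)$ as $t\to\infty$ along the ray, which is the assertion.

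The main obstacle I anticipate is the second step: correctly taking the limit of the degenerating norming constants, identifying the partition so that exactly one eigenvalue survives, and checking that the product factor supplied by Corollary~\ref{corInvCont} combines with the translation and flow factors to reproduce \emph{precisely} the phase shift $\eta_{\lambda_0}$ of~\eqref{eqnLTA} rather than some shifted variant. The subsequential nature of weak$^\ast$ compactness is the other delicate point, but it is dissolved by Theorem~\ref{thmInvUniq}, which forces all weak$^\ast$ limit points to coincide.
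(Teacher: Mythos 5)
Your proposal is correct and follows essentially the same route as the paper: translate the measure along the ray, track the norming constants via \eqref{def:gamt}, identify the weak$^\ast$ limit as a single peakon (or zero) through Theorem~\ref{thmweakstarconv}/Corollary~\ref{corInvCont} and Theorem~\ref{thmInvUniq}, and dispose of the remaining summands by dominated convergence using \eqref{eqnEVsumabl}. Your sign for the limiting peakon position ($p_\infty=-x_0-\eta_{\lambda_0}$) is the consistent one, and in any case only $|x_0+\eta_{\lambda_0}|$ enters the final formula, so the two accounts agree.
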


\begin{proof}
 For every $t\in I$ let $\omega_t$ be the measure given by $\omega_t(B) = \omega(B + x_0 + ct,t)$ for each Borel set $B\subseteq\R$. 
 Since translations obviously leave the spectrum invariant, $\omega_t$ lies in $\M_\sigma$ for each $t\in I$ and we furthermore have
 \begin{align}\tag{$*$}\label{eqnuTrans} 
  \int_\R \E^{-|x_0 + ct-s|}d\omega(s,t) = \int_\R \E^{-|s|} d\omega_t(s), \quad t\in I.
 \end{align}
 Moreover, the solutions $\phi_{t,\pm}$ associated with $\omega_t$ are simply given by
 \begin{align*}
  \phi_{t,\pm}(z,x) = \E^{\pm\frac{x_0 + ct}{2}} \phi_\pm(z,x+x_0+ct,t), \quad x\in\R,~z\in\C,~t\in I
 \end{align*}
 and hence the norming constants $\gamma_{t,\lambda,\pm}^2$, $\lambda\in\sigma$ corresponding to $\omega_t$ are given by
 \begin{align*}
  \gamma_{t,\lambda,\pm}^2 = \gamma_{\lambda,\pm}^2(t) \E^{\pm (x_0 + ct)} =  \gamma_{\lambda,\pm}^2(t_0)\E^{\pm\left(x_0 + \frac{t_0}{2\lambda}\right)} \E^{\pm t\left(c-\frac{1}{2\lambda}\right)}, \quad \lambda\in\sigma,~ t\in I.
 \end{align*}
 Now since these quantities converge (including possibly to infinity) as $t\rightarrow\infty$ in $I$ we infer from Corollary~\ref{corInvCont} that $\omega_t\rightharpoonup^\ast\omega_c$ for some $\omega_c\in\M_\sigma$ whose corresponding spectrum contains at most one eigenvalue.
 
 We will distinguish between the two possible cases. 
 First, if there is some $\lambda_c\in\sigma$ with $\lambda_c^{-1}=2c$, then the only eigenvalue associated with $\omega_c$ is $\lambda_c$ with corresponding norming constant given by
 \begin{align*}
  \gamma_{\lambda_c,\pm}^2(t_0) \E^{\pm\left(x_0 + ct_0\right)} \mathop{\prod_{\kappa\in\sigma}}_{\pm\kappa^{-1}> \pm\lambda_c^{-1}} \left(1-\frac{\lambda_c}{\kappa}\right)^{-2}
 \end{align*}
 in view of Corollary~\ref{corInvCont}.
 Hence a comparison with the peakon solution in Section~\ref{secCH} 
 shows that $\omega_c = 2 c \delta_{x_c}$ with $x_c = x_0 + \eta_{\lambda_c}$.
 In view of~\eqref{eqnuTrans} this yields the asymptotics 
 \begin{align*}
  \int_\R \E^{-|x-s|}d\omega(s,t) = \frac{1}{\lambda_c} \, \E^{-\left| x -\frac{t}{2\lambda_c} +\eta_{\lambda_c}\right|} + \oo(1),
 \end{align*}
 as $t\rightarrow\infty$ in $I$ along the ray $x=x_0 + ct$. 
 Secondly, if the spectrum $\sigma_\infty$ is empty, then we infer that $\omega_t\rightharpoonup^\ast 0$ and~\eqref{eqnuTrans} shows that the left-hand side of~\eqref{eqnLTA} is of order $\oo(1)$ as $t\rightarrow\infty$ in $I$ along the ray $x=x_0 + ct$.  
 In order to finish the proof (in both cases) note that all remaining terms in the sum on the right-hand side of~\eqref{eqnLTA} are of order $\oo(1)$ as $t\rightarrow\infty$ in $I$ along the ray $x=x_0+ct$. 
 A simple application of the dominated convergence theorem shows that the sum over all these remaining terms is of order $\oo(1)$ as well. 
\end{proof} 

 Of course there are no essential differences when considering the asymptotics for $t\rightarrow -\infty$ in $I$. 
  In fact, if some family of finite signed measures $\omega(\,\cdot\,,t)$, $t\in I$ evolves according to the Camassa--Holm flow, then so does $\omega(\,-\,\cdot\,,-t)$, $t\in -I$. 
  Hence, asymptotics for $t\rightarrow -\infty$ in $I$ may be deduced immediately from Theorem~\ref{thmLT}. 


\end{document}